\documentclass[leqno,12pt]{article} 
\setlength{\textheight}{23cm}
\setlength{\textwidth}{16cm}
\setlength{\oddsidemargin}{0cm}
\setlength{\evensidemargin}{0cm}
\setlength{\topmargin}{0cm}
\usepackage{amsmath, amssymb}
\usepackage{amsthm} 
%
%
%
\theoremstyle{plain} 
\newtheorem{theorem}{\indent\sc Theorem}[section]
\newtheorem{lemma}[theorem]{\indent\sc Lemma}
\newtheorem{corollary}[theorem]{\indent\sc Corollary}
\newtheorem{proposition}[theorem]{\indent\sc Proposition}

\theoremstyle{definition} 
\newtheorem{definition}[theorem]{\indent\sc Definition}
\newtheorem{remark}[theorem]{\indent\sc Remark}
\newtheorem{example}[theorem]{\indent\sc Example}

%

%

\makeatletter
\def\address#1#2{\begingroup
\noindent\parbox[t]{7.8cm}{%
\small{\scshape\ignorespaces#1}\par\vskip1ex
\noindent\small{\itshape E-mail address}%
\/: #2\par\vskip4ex}\hfill%
\endgroup}%
\makeatother
%
\pagestyle{myheadings}
\markright{Biharmonic homomorphisms between Riemannian Lie groups} 

\title{\uppercase{Biharmonic homomorphisms between Riemannian Lie groups}} 
\author{
%
\bigskip \\
\textsc{Mohamed Boucetta$^{*}$ and Seddik Ouakkas} 
}
\date{} 
%

\newcommand{\prs}{\langle\;,\;\rangle}
\newcommand{\too}{\longrightarrow}
\newcommand{\om}{\omega}
\newcommand{\Ad}{{\mathrm{Ad}}}
\newcommand{\esp}{\quad\mbox{and}\quad}

\newcommand{\G}{{\mathfrak{g}}}

\newcommand{\h}{{\mathfrak{h} }}
\newcommand{\n}{{\mathfrak{n} }}
\newcommand{\ad}{{\mathrm{ad}}}
\newcommand{\tr}{{\mathrm{tr}}}
\newcommand{\ric}{{\mathrm{ric}}}

\newcommand{\spa}{{\mathrm{span}}}

\newcommand{\na}{\nabla}
\newcommand{\wi}{\widetilde}
\newcommand{\al}{\alpha}

\newcommand{\Ga}{\Gamma}

\newcommand{\la}{\lambda}

\newcommand{\de}{\delta}

\font\bb=msbm10

\def\R{\hbox{\bb R}}

\begin{document}

\maketitle

\footnote{ 
2010 \textit{Mathematics Subject Classification}.
Primary 53C30; 53C43; Secondary 22E15.
}
\footnote{ 
\textit{Key words and phrases}.
Harmonic homomorphisms,  biharmonic homomorphisms,  Riemannian submersions, Minimal Riemannian immersions.
}
\footnote{ 
}

\begin{abstract}
A Lie group $G$ endowed with a left invariant Riemannian metric $g$ is called \emph{Riemannian Lie group}. Harmonic and biharmonic maps between Riemannian manifolds is an important area of investigation. In this paper, we study different aspects of harmonic and biharmonic homomorphisms between Riemannian Lie groups. We show that this class of biharmonic maps can be  used at the first level  to build examples but, as we will see through this paper, its study will lead to some interesting mathematical problems in the theory of Riemannian Lie groups.
\end{abstract}

\section{Introduction}\label{section1}

Let $\phi:(M,g)\too(N,h)$ be a smooth map between two Riemannian manifolds with $m=\dim M$ and $n=\dim N$. We  denote by $\na^M$ and $\na^N$ the Levi-Civita connexions associated respectively to $g$ and $h$ and  by $T^\phi N$ the vector bundle over $M$ pull-back of $TN$ by $\phi$. It is  an Euclidean vector bundle and the tangent map of $\phi$ is a bundle homomorphism $d\phi:TM\too T^\phi N$. Moreover, $T^\phi N$  carries a connexion $\na^\phi$ pull-back of $\na^N$ by $\phi$ and there is  a connexion on the vector bundle  $\mathrm{End}(TM,T^\phi N)$ given by
\[ (\na_X A)(Y)=\na_X^\phi A(Y)-A\left(\na_X^MY \right),\quad X,Y\in\Ga(TM), A\in \Ga\left(\mathrm{End}(TM,T^\phi N)\right).\]
The map $\phi$ is called harmonic if it is a critical point of the energy $E(\phi)=\frac12\int_M|d\phi|^2\nu_g$. The corresponding Euler-Lagrange equation for the energy is given by the vanishing of the tension field  
\begin{equation*}\label{eqtension} \tau(\phi)=\tr_{g}\na d\phi=\sum_{i=1}^m(\na_{E_i}d\phi)(E_i),     \end{equation*}
where $(E_i)_{i=1}^m$ is a local frame of orthonormal vector fields. Note that $\tau(\phi)\in\Ga(T^\phi N)$.
The map $\phi$ is called biharmonic if it is a critical point of the
 bienergy of $\phi$  defined by $E_2(\phi)=\frac12\int_M|\tau(\phi)|^2\nu_g$. The corresponding Euler-Lagrange equation for the bienergy is given by the vanishing of the bitension field 
 \begin{eqnarray*}\label{eqbitension} \tau_2(\phi)&=&-\tr_{g}(\na^\phi)^2_{.\;,\;.}\tau(\phi)-
 \tr_{g}R^N(\tau(\phi),d\phi(\;.\;))d\phi(\;{\bf.}\;)\nonumber\\&=&-\sum_{i=1}^m\left((\na^\phi)^2_{E_i,E_i}\tau(\phi)   +R^N(\tau(\phi),d\phi(E_i))d\phi(E_i)\right),
      \end{eqnarray*}
where $(E_i)_{i=1}^m$ is a local frame of orthonormal vector fields, $(\na^\phi)^2_{X,Y}=\na^\phi_X\na^\phi_Y-\na^\phi_{\na_X^MY}$ and $R^N$ is the curvature of $\na^N$ given by
\[R^N(X,Y)=\na_X^N\na_Y^N-\na_Y^N\na_X^N-\na_{[X,Y]}^N. \]
The theory of biharmonic maps is old and rich and has gained a growing interest in the last decade see \cite{baird,sario} and others. The theory of biharmonic maps into Lie groups, symmetric spaces or
homogeneous spaces has been extensively studied related to  
integrable systems  (see for instance \cite{dai, uhlenbeck, wood}). In particular, harmonic maps of Riemann surfaces into
compact Lie groups equipped with a bi-invariant Riemannian metric are called principal
chiral models and intensively studied as toy models of gauge theory in mathematical physics \cite{zakr}.
 Curiously, there is no detailed study of harmonic or biharmonic homomorphisms between Riemannian Lie groups\footnote{A biharmonic homomorphism between Riemannian Lie groups is a homomorphism of Lie groups $\phi:G\too H$ which is also biharmonic where $G$ and $H$ are endowed with left invariant Riemannian metrics.}.  To our knowledge, the only works on this topic are \cite{park, park2} where the authors studied harmonic inner automorphisms of a compact semi-simple Lie group endowed with a left invariant Riemannian metric. In this paper, 
 we investigate   biharmonic homomorphisms between Riemannian Lie groups. At first sight, this class can be mainly used  to build examples but, as we will see through this paper, its study  gives rise to some interesting mathematical problems in the theory of Riemannian Lie groups. Moreover, this class can be enlarged non trivially without extra work as follows. Let $\phi: G\too H$ be a biharmonic homomorphism between two Riemannian Lie groups,  $\Ga_1$ and $\Ga_2$ are two discrete subgroups of $G$ and $H$, respectively, with $\phi(\Ga_1)\subset\Ga_2$. Then $\Ga_1/G$ and $\Ga_2/H$ carry two  Riemannian metrics and $\pi\circ\phi:G\too \Ga_2/H$ factor to a smooth map $\wi\phi:\Ga_1/G\too \Ga_2/H$ which  is biharmonic (harmonic if $\phi$ is harmonic).\\
 The paper is organized as follows. In Section \ref{section2}, we characterize at the level of Lie algebras harmonic and biharmonic homomorphisms between Riemannian Lie groups and we give some of their general properties. We obtain some  results which even they are immediate  are interesting (see Propositions \ref{prbi}, \ref{pr4}, \ref{pr4bis} and Example \ref{exem}).
 In Section \ref{section3}, we study harmonic automorphisms of a Riemannian Lie group. We show that on a compact Riemannian Lie group there is a finite number of harmonic inner automorphisms, up to inner isometries (see Theorem \ref{finite}).
 We show also that a left invariant Riemannian metric on a Lie group satisfies the property that any inner automorphism is harmonic iff it is bi-invariant (see Theorem \ref{park}). These two results have been obtained by Park in \cite{park} in the particular case of $\mathrm{SU}(2)$. On the other hand,
 we introduce the notion of the harmonic cone associated to a left invariant Riemannian metric and we show that it is a Riemannian invariant of the metric in some sense. We compute the dimension of this cone when the Lie group is unimodular (see Proposition \ref{compute}). In Sections \ref{section4}, we give an useful description of submersions between Riemannian Lie groups and their tension fields (see Propositions \ref{submersion} and \ref{pr6}). Based on this, we prove in Section \ref{section5} two important results on biharmonic Riemannian submersions between Riemannian Lie groups (see Theorems \ref{theo1} and \ref{theo2}).
  In Section \ref{section6}, we give many situations where harmonicity and biharmonicity of a homomorphism between two Riemannian Lie groups are equivalent. Some situations are particular cases of known results but many  are new and specific to our context (see Theorems \ref{theo3} and \ref{theo5}).
 In Section \ref{section7}, based on the results of Sections \ref{section4}-\ref{section5}, we give some general methods to build large classes of harmonic or biharmonic homomorphisms between Riemannian Lie groups. In Section \ref{section8}, we clarify completely the situation in dimension 2. 
 
 {\bf Notations and conventions.} Trough this paper all considered Lie groups  are supposed to be connected. A Lie group $G$ endowed with a left invariant Riemannian metric $g$ is called \emph{Riemannian Lie group}. Its Lie algebra $\G=T_eG$ endowed with the scalar product $g(e)$ is called \emph{Euclidean Lie algebra}. For any $u\in\G$, we denote by $\ad_u:\G\too\G$ the linear map given by $\ad_uv=[u,v]$. The group $G$ is \emph{unimodular} iff, for any $u\in\G$, $\tr(\ad_u)=0$. A Riemannian metric $g$ on  $G$ is bi-invariant  if it is left an right invariant. This is equivalent to $\ad_u$ is skew-symmetric with respect $g(e)$ for any $u\in\G$. For any $a\in G$, $L_a$ and $R_a$ denote, respectively, the left multiplication and the right multiplication by $a$ and $i_a=L_a\circ R_{a^{-1}}$ is the inner automorphism associated to $a$. We denote by $\Ad_a$ the differential of $i_a$ at $e$. Through this paper maps between Lie groups are  homomorphisms. In particular,
a submersion (resp. Riemannian submersion) between Riemannian Lie groups is an homomorphism of Lie groups which is a submersion (resp. Riemannian submersion). \\
Finally, if $F:(V,\prs_1)\too(W,\prs_2)$ is a linear map between two Euclidean vector space, we denote by $F^*:W\too V$ the adjoint given by the relation $\langle F(w),v\rangle_1=\langle w,F(v)\rangle_2$. 
 
\section{Biharmonic homomorphisms between Riemannian Lie groups: general properties and first examples}
\label{section2}

For an homomorphism between two Riemannian Lie group the tension field and the bitension field are left invariant and the harmonicity and the biharmonicity are equivalent to the vanishing of two vectors in the Lie algebra of the target group. We express  these two vectors in two different ways and we deduce some immediate properties. We investigate the cases holomorphic homomorphisms between K\"ahlerian Lie groups and Riemannian immersions.

\subsection{Tension field and bitension field for homomorphisms between Riemannian Lie groups}

Let $(G,g)$ be a Riemannian Lie group. If $\G=T_eG$ is its Lie algebra and $\prs_\G=g(e)$ then there exists a unique bilinear map $A:\G\times\G\too\G$ called the Levi-Civita product associated to $(\G,\prs_\G)$ given by the formula:
\begin{equation}\label{lc}
 2\langle A_uv,w\rangle_\G=\langle[u,v]^\G ,w\rangle_\G+\langle[w,u]^\G
,v\rangle_\G+\langle[w,v]^\G ,u\rangle_\G.
\end{equation} $A$ is entirely determined by the following properties:
\begin{enumerate}\item for any $u,v\in\G$, $A_uv-A_vu=[u,v]^\G$,
\item for any $u,v,w\in\G$, $\langle A_uv,w\rangle_\G+\langle v,A_uw\rangle_\G=0$.
\end{enumerate}
If we denote by $u^\ell$ the left invariant vector field on $G$ associated to $u\in\G$ then the Levi-Civita connection associated to $(G,g)$ satisfies $\na_{u^\ell}v^\ell=\left(A_uv \right)^\ell$. The couple $(\G,\prs_\G)$ defines a vector say $U^\G\in\G$ by
\begin{equation*}\label{eq1}
  \langle U^\G,v\rangle_\G=\tr(\ad_v),\quad\mbox{for any}\; v\in\G.
\end{equation*} One can deduce  easily from \eqref{lc} that, for any orthonormal basis $(e_i)_{i=1}^n$ of $\G$, 
\begin{equation}\label{ug} U^\G=\sum_{i=1}^nA_{e_i}e_i. \end{equation}
Note that $\G$ is  unimodular iff $U^\G=0$. We denote by $\mathrm{Inisom}(g)$ the subgroup of $G$ consisting of $a\in G$ such that the inner automorphism $i_a$ is an isometry. We have $$\mathrm{Inisom}(g)=\left\{a\in G,\Ad_a^*\circ\Ad_a=\mathrm{id}_\G \right\}.$$ Thus $\mathrm{Inisom}(g)$ is a closed subgroup containing the center $Z(G)$.We denote by $\mathrm{Kill}(g)$ its Lie algebra  given by $$\mathrm{Kill}(g)=\{u\in\G,\ad_u+\ad_u^*=0\}.$$ Remark that $\mathrm{Kill}(g)$ can be identified with the Lie algebra of left invariant Killing vector fields of $g$ and if $G$ is nilpotent then $\mathrm{Inisom}(g)=Z(G)$.

Let $\phi:(G,g)\too (H,h)$ be a Lie groups homomorphism  between two Riemannian Lie groups.  The differential $\xi:\G\too\h$ of $\phi$ at $e$ is a Lie algebra homomorphism. Define $U^\xi\in\h$ by
\begin{equation}\label{uxi}U^\xi=\sum_{i=1}^nB_{\xi(e_i)}{\xi(e_i)},\end{equation} where $(e_i)_{i=1}^n$ is an orthonormal basis of $\G$ and $B$ is the Levi-Civita product associated to $(\h,\prs_\h)$.
There is a left  action of $G$ on $\Ga(T^\phi H)$ given by 
\[ (a.X)(b)= T_{\phi(ab)}L_{\phi(a^{-1})} X(ab),\quad a,b\in G, X\in \Ga(T^\phi H).   \]
A section $X$ of $T^\phi H$ is called left invariant if, for any $a\in G$, $a.X=X$. For any left invariant section $X$ of $T^\phi H$, we have for any $a\in G$,
$ X(a)=(X(e))^\ell(\phi(a)).$ 
Thus  the space of left invariant sections is isomorphic to the Lie algebra $\h$.
Since $\phi$ is a homomorphism of Lie groups and $g$ and $h$ are left invariant, one can see easily that $\tau(\phi)$ and $\tau_2(\phi)$ are left invariant and hence $\phi$ is harmonic (resp. biharmonic) iff $\tau(\phi)(e)=0$ (resp. $\tau_2(\phi)(e)=0$).  
 So we get the following proposition.
\begin{proposition}\label{pr1}Let $\phi:G\too H$ be an homomorphism between two Riemannian Lie groups. Then $\phi$ is harmonic $($resp. biharmonic$)$ iff
\begin{eqnarray*}\label{tension}\tau(\xi):&=&\tau(\phi)(e)=U^\xi-\xi(U^\G)=0,\\ \tau_2(\xi):&=&\tau_2(\phi)(e)=-\sum_{i=1}^n\left(B_{\xi(e_i)}B_{\xi(e_i)}\tau(\xi)
+K^H(\tau(\xi),\xi(e_i))\xi(e_i)\right)+B_{\xi(U^\G)}\tau(\xi)=0,\nonumber
\end{eqnarray*}where $\xi:\G\too\h$ is the differential of $\phi$ at $e$, $B$ is the Levi-Civita product associated to $(\h,\prs_\h)$
 and $K^H$ is the curvature of $B$ given by $K^H(u,v)=[B_u,B_v]-B_{[u,v]}$.

\end{proposition}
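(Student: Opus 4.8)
The discussion preceding the statement has already done the conceptual half of the work: since $\tau(\phi)$ and $\tau_2(\phi)$ are left invariant, $\phi$ is harmonic (resp. biharmonic) iff $\tau(\phi)(e)=0$ (resp. $\tau_2(\phi)(e)=0$). So the only task remaining is to compute the two vectors $\tau(\phi)(e)$ and $\tau_2(\phi)(e)$ of $\h$ explicitly in terms of $\xi$, the Levi-Civita products $A$, $B$ and the vector $U^\G$. My plan is to reduce every connection operation occurring in the tension and bitension fields to a purely algebraic operation in $\h$.

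The crucial preliminary step, on which everything rests, is to describe how $\na^\phi$ acts on left invariant sections of $T^\phi H$. For $w\in\h$ write $\wi w:=w^\ell\circ\phi$ for the left invariant section, so that $\wi w(a)=w^\ell(\phi(a))$. First I would use that $\phi$ is a homomorphism: from $\phi\circ L_a=L_{\phi(a)}\circ\phi$, differentiation gives $T_a\phi\circ T_eL_a=T_eL_{\phi(a)}\circ\xi$, whence for $v\in\G$ one gets $d\phi(v^\ell)=\wi{\xi(v)}$; in particular $d\phi(e_i^\ell)=\wi{\xi(e_i)}$. Then, applying the defining property $\na_X^\phi(W\circ\phi)=\na^H_{d\phi(X)}W$ of the pull-back connection together with $\na^H_{u^\ell}w^\ell=(B_uw)^\ell$, I obtain the key identity
\[ \na_{v^\ell}^\phi\wi w=\wi{B_{\xi(v)}w},\quad v\in\G,\ w\in\h. \]
With this in hand the tension field is immediate. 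Taking the orthonormal left invariant frame $E_i=e_i^\ell$, the first sum in $(\na_{E_i}d\phi)(E_i)=\na_{E_i}^\phi d\phi(E_i)-d\phi(\na_{E_i}^ME_i)$ gives $\sum_i\wi{B_{\xi(e_i)}\xi(e_i)}=\wi{U^\xi}$, while $\na_{E_i}^ME_i=(A_{e_i}e_i)^\ell$ sums to $(U^\G)^\ell$ by \eqref{ug} and is carried by $d\phi$ to $\wi{\xi(U^\G)}$. Hence $\tau(\phi)=\wi{U^\xi-\xi(U^\G)}$ and $\tau(\phi)(e)=U^\xi-\xi(U^\G)=\tau(\xi)$.

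For the bitension field I assemble the three ingredients of $\tau_2(\phi)=-\sum_i\big((\na^\phi)^2_{E_i,E_i}\tau(\phi)+R^H(\tau(\phi),d\phi(E_i))d\phi(E_i)\big)$ on the left invariant section $\tau(\phi)=\wi t$ with $t=\tau(\xi)$. Iterating the key identity gives $\na_{E_i}^\phi\na_{E_i}^\phi\wi t=\wi{B_{\xi(e_i)}B_{\xi(e_i)}t}$, whereas the correction term is $\na_{\na_{E_i}^ME_i}^\phi\wi t=\na_{(A_{e_i}e_i)^\ell}^\phi\wi t=\wi{B_{\xi(A_{e_i}e_i)}t}$, which by linearity of $\xi$ and \eqref{ug} sums over $i$ to the section associated to $B_{\xi(U^\G)}t$; thus $\sum_i(\na^\phi)^2_{E_i,E_i}\tau(\phi)$ is the left invariant section associated to $\sum_iB_{\xi(e_i)}B_{\xi(e_i)}t-B_{\xi(U^\G)}t$. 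For the curvature term I first record $R^H(u^\ell,v^\ell)w^\ell=(K^H(u,v)w)^\ell$ with $K^H(u,v)=[B_u,B_v]-B_{[u,v]}$, a direct consequence of $\na^H_{u^\ell}v^\ell=(B_uv)^\ell$ and $[u^\ell,v^\ell]=[u,v]^\ell$; since $R^H$ is tensorial and its three arguments are left invariant sections, $R^H(\tau(\phi),d\phi(E_i))d\phi(E_i)=\wi{K^H(t,\xi(e_i))\xi(e_i)}$. Collecting both sums and evaluating at $e$ reproduces exactly the stated expression for $\tau_2(\xi)$.

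I expect the main obstacle to be the preliminary step, namely the correct interpretation of $\na^\phi$ on left invariant sections and the verification of $d\phi(v^\ell)=\wi{\xi(v)}$: this is precisely where the homomorphism hypothesis enters, and one must be careful that $\wi w=w^\ell\circ\phi$ is genuinely the pull-back of a vector field on $H$ so that the defining relation for $\na^\phi$ applies. Once the key identity is established, the remaining computations are purely algebraic in $\h$; the only point requiring attention is that in $(\na^\phi)^2$ the correction term $\na_{\na_{E_i}^ME_i}^\phi$ must be summed over $i$ before invoking linearity of $\xi$, which is exactly what produces the single summand $B_{\xi(U^\G)}\tau(\xi)$ sitting outside the sum in the final formula.
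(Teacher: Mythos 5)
Your proof is correct and follows exactly the route the paper intends: the paper establishes left invariance of $\tau(\phi)$ and $\tau_2(\phi)$ in the paragraph preceding the proposition and then leaves the evaluation at $e$ as an implicit direct computation, which is precisely what you carry out via the identity $\na^\phi_{v^\ell}\wi w=\wi{B_{\xi(v)}w}$. All signs check out, including the $+B_{\xi(U^\G)}\tau(\xi)$ term coming from the correction $-\na^\phi_{\na^M_{E_i}E_i}$ summed via \eqref{ug}.
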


\begin{remark}\label{rem1}\begin{enumerate}\item It is obvious from the definition of $\tau(\xi)$ that if $\xi$ preserves the Levi-Civita products then $\phi$ is harmonic. 

\end{enumerate}

\end{remark}
The following proposition follows easily from the definition of $\tau(\xi)$. It is a particular case of Proposition \ref{prbi}.
\begin{proposition}\label{prab} Let $\phi:(G,g)\too (H,h)$ be an  homomorphism  between two Riemannian Lie groups where $H$ is abelian. Then $\phi$ is biharmonic and it is harmonic when $\G$ is unimodular. In particular, any character $\chi:G\too\R$ is biharmonic and it is harmonic when $\G$ is unimodular.

\end{proposition}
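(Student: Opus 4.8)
The plan is to feed the hypothesis on $H$ into the characterization of Proposition \ref{pr1} and observe that it forces the Levi-Civita product on $\h$ to vanish identically. First I would note that since $H$ is abelian, its Euclidean Lie algebra $(\h,\prs_\h)$ has trivial bracket, so $[u,v]^\h=0$ for all $u,v\in\h$. Substituting this into the defining formula \eqref{lc} for the Levi-Civita product $B$ of $(\h,\prs_\h)$ gives $2\langle B_uv,w\rangle_\h=0$ for every $w\in\h$, whence $B\equiv 0$. This is the only real input; everything else is bookkeeping.

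With $B\equiv 0$ in hand I would read off the consequences for the two fields in Proposition \ref{pr1}. From \eqref{uxi} we get $U^\xi=\sum_{i=1}^n B_{\xi(e_i)}\xi(e_i)=0$, so the tension field reduces to $\tau(\xi)=-\xi(U^\G)$. For the bitension field, every summand is built from $B$ and from its curvature $K^H(u,v)=[B_u,B_v]-B_{[u,v]}$; since $B\equiv 0$ we also have $K^H\equiv 0$, so each of the three terms $B_{\xi(e_i)}B_{\xi(e_i)}\tau(\xi)$, $K^H(\tau(\xi),\xi(e_i))\xi(e_i)$ and $B_{\xi(U^\G)}\tau(\xi)$ vanishes. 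Hence $\tau_2(\xi)=0$ unconditionally, which proves that $\phi$ is biharmonic.

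Finally I would treat harmonicity and the character case. If $\G$ is unimodular then $U^\G=0$ by the remark following \eqref{ug}, so $\tau(\xi)=-\xi(U^\G)=0$ and $\phi$ is harmonic. The statement about characters is the special instance $H=\R$: the additive group $\R$ is abelian, so the general conclusion applies verbatim. I do not expect any genuine obstacle here—the entire content is encapsulated in the flatness of abelian Riemannian Lie groups, i.e.\ the vanishing of $B$—so the argument is essentially a one-line computation organized through Proposition \ref{pr1}.
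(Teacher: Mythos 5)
Your argument is correct and is essentially the direct verification the paper intends when it says the proposition ``follows easily from the definition of $\tau(\xi)$'': with $[\;,\;]^\h=0$ formula \eqref{lc} forces $B\equiv 0$, hence $U^\xi=0$, $K^H\equiv 0$, and every term of $\tau_2(\xi)$ in Proposition \ref{pr1} vanishes, while unimodularity of $\G$ kills the remaining term $-\xi(U^\G)$ of $\tau(\xi)$. The paper alternatively packages this as the special case of Proposition \ref{prbi}$(ii)$ (an abelian left invariant metric is bi-invariant), but your computation is a valid and even more elementary route to the same conclusion.
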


We give now another expression of $\tau(\xi)$ and $\tau_2(\xi)$ which will be useful later. 
\begin{proposition}\label{pr2}With the notations above, we have for any $u\in\h$,
\begin{eqnarray*}
\langle U^\xi,u\rangle_\h&=&\tr(\xi^*\circ\ad_u\circ \xi),\\
\langle\tau_2(\xi),u\rangle_\h&=&\tr(\xi^*\circ(\ad_u+\ad_u^*) \circ\ad_{\tau(\xi)} \circ\xi)
-\langle [u,\tau(\xi)]^\h,\tau(\xi)\rangle_\mathfrak{h}-\langle
[{\tau(\xi)},U^\xi]^\h,u\rangle_\mathfrak{h}.
\end{eqnarray*}In particular, $U^\xi\in (\mathrm{Kill}(h))^\perp$.

\end{proposition}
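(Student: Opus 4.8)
The plan is to verify both displayed identities by direct computation from the expressions for $U^\xi$ and $\tau_2(\xi)$ in Proposition \ref{pr1}, using three structural facts about the Levi-Civita product $B$ of $(\h,\prs_\h)$: the metric property $\langle B_av,w\rangle_\h=-\langle v,B_aw\rangle_\h$ (so each $B_a$ is skew-symmetric), the torsion-free relation $B_ab-B_ba=[a,b]^\h$, and the Koszul formula, which rewritten with adjoints reads $B_ab=\frac12\bigl([a,b]^\h-\ad_a^*b-\ad_b^*a\bigr)$. Throughout I abbreviate $\tau:=\tau(\xi)$ and $f_i:=\xi(e_i)$, and I use repeatedly that for any endomorphism $\Phi$ of $\h$ one has $\sum_i\langle\Phi f_i,f_i\rangle_\h=\tr(\xi^*\circ\Phi\circ\xi)$, since the $e_i$ form an orthonormal basis and $\langle\xi^*(w),e_i\rangle_\G=\langle w,f_i\rangle_\h$.

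First I would establish the formula for $U^\xi$. Applying the Koszul formula with $a=b=f_i$ and pairing against $u$, the term $[f_i,f_i]^\h$ drops out and one is left with $\langle B_{f_i}f_i,u\rangle_\h=\langle[u,f_i]^\h,f_i\rangle_\h=\langle\ad_u f_i,f_i\rangle_\h$; summing over $i$ and moving $\xi$ through its adjoint gives $\langle U^\xi,u\rangle_\h=\tr(\xi^*\circ\ad_u\circ\xi)$. The final assertion $U^\xi\in(\mathrm{Kill}(h))^\perp$ is then immediate: if $u\in\mathrm{Kill}(h)$ then $\ad_u$ is skew-symmetric, hence so is $\xi^*\circ\ad_u\circ\xi$, and a skew-symmetric endomorphism has vanishing trace.

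The substance is the formula for $\tau_2(\xi)$. I would pair the expression of Proposition \ref{pr1} with $u$ and treat the three groups of terms in turn, first substituting $\xi(U^\G)=U^\xi-\tau$ and expanding the curvature as $K^H(\tau,f_i)f_i=B_\tau B_{f_i}f_i-B_{f_i}B_\tau f_i-B_{[\tau,f_i]^\h}f_i$. The key regrouping is to combine the second-derivative term $-\sum_i\langle B_{f_i}B_{f_i}\tau,u\rangle_\h$ with the middle curvature term $\sum_i\langle B_{f_i}B_\tau f_i,u\rangle_\h$: using skew-symmetry of $B_{f_i}$ and then the torsion-free relation $B_{f_i}\tau-B_\tau f_i=[f_i,\tau]^\h$, this pair collapses to $-\sum_i\langle[\tau,f_i]^\h,B_{f_i}u\rangle_\h$, and expanding $B_{f_i}u$ by the Koszul formula yields half of the trace term $\frac12\tr\bigl(\xi^*(\ad_u+\ad_u^*)\ad_\tau\xi\bigr)$ together with a cubic remainder $\frac12\sum_i\langle[f_i,[\tau,f_i]^\h]^\h,u\rangle_\h$. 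Expanding the last curvature term $B_{[\tau,f_i]^\h}f_i$ by Koszul produces the other half of the trace term and exactly the opposite cubic remainder, so the two cubic sums cancel. Finally, the leftover pieces — the term $-\langle B_\tau U^\xi,u\rangle_\h$ from the curvature (recognizing $\sum_i B_{f_i}f_i=U^\xi$) and $\langle B_{U^\xi}\tau,u\rangle_\h-\langle B_\tau\tau,u\rangle_\h$ from the $\xi(U^\G)$ term — are each expanded by Koszul; the $\langle U^\xi,[\tau,u]^\h\rangle_\h$ and $\langle\tau,[U^\xi,u]^\h\rangle_\h$ contributions cancel in pairs, the identity $B_\tau\tau=-\ad_\tau^*\tau$ supplies the term $-\langle[u,\tau]^\h,\tau\rangle_\h$, and what survives is $-\langle[\tau,U^\xi]^\h,u\rangle_\h$. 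Assembling the three groups gives the claimed identity.

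The main obstacle is purely organizational: keeping the numerous $\frac12$-weighted bracket terms aligned so that the cubic sums $\sum_i\langle[f_i,[\tau,f_i]^\h]^\h,u\rangle_\h$ cancel and the $U^\xi$-bilinear terms collapse correctly. No deep input is needed beyond the three identities for $B$, but the bookkeeping must be carried out carefully, since several terms differ only by a sign or by the order of a bracket.
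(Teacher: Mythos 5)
Your proposal is correct and follows essentially the same route as the paper's proof: a direct expansion via the Koszul formula, the curvature decomposition $K^H(\tau,f_i)f_i=B_\tau B_{f_i}f_i-B_{f_i}B_\tau f_i-B_{[\tau,f_i]}f_i$, the substitution $\xi(U^\G)=U^\xi-\tau(\xi)$, and the same key regrouping $-B_{f_i}B_{f_i}\tau+B_{f_i}B_\tau f_i=-B_{f_i}[f_i,\tau]^\h$ leading to the cancellation of the cubic sums $\sum_i\langle[f_i,[\tau,f_i]^\h]^\h,u\rangle_\h$. The only difference is cosmetic bookkeeping (you move $B_{f_i}$ across the pairing by skew-symmetry and expand $B_{f_i}u$, whereas the paper expands $B_{f_i}[f_i,\tau]^\h$ via torsion-freeness), and all signs and traces check out.
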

\begin{proof} Let $(e_i)_{i=1}^n$ be an orthonormal basis of $\G$.
For any $u\in\h$, we have
\begin{eqnarray*}
\langle U^\xi,u\rangle_\h&=&\sum_{i=1}^n\langle B_{\xi(e_i)}\xi(e_i),u\rangle_\h
\stackrel{\eqref{lc}}=\sum_{i=1}^n\langle [u,{\xi(e_i)}]^\h,\xi(e_i)\rangle_\h\\
&=&\sum_{i=1}^n\langle\xi^*\circ\ad_u \circ{\xi(e_i)},e_i\rangle_\G
=\tr(\xi^*\circ\ad_u\circ \xi),
\end{eqnarray*}which gives the first relation. From this relation, one can deduce that if $\ad_u^*=-\ad_u$ then $\langle U^\xi,u\rangle_\h=0$ and hence $U^\xi\in(\mathrm{Kill}(h))^\perp$.
Now
put 
$$Q=-\sum_{i=1}^n\left(\langle B_{\xi(e_i)}B_{\xi(e_i)}\tau(\xi),u\rangle_\h+
\langle K^\mathfrak{h}(\tau(\xi),\xi(e_i))\xi(e_i),u\rangle_\h\right)+\langle B_{\xi(U^\G)}\tau(\xi),u\rangle_\h.$$We have\small{
\begin{eqnarray*}
\sum_{i=1}^n\langle K^\mathfrak{h}(\tau(\xi),\xi(e_i))\xi(e_i),u\rangle_\h&=&
\sum_{i=1}^n\left(\langle B_{\tau(\xi)}B_{\xi(e_i)}\xi(e_i)  ,u\rangle_\h
-\langle B_{\xi(e_i)}B_{\tau(\xi)}\xi(e_i)  ,u\rangle_\h
-\langle B_{[\tau(\xi),\xi(e_i)]^\h}\xi(e_i),u\rangle_h  \right)\\
&=&\langle B_{\tau(\xi)}U^\xi,u\rangle_\h-
\sum_{i=1}^n\left(
\langle B_{\xi(e_i)}B_{\tau(\xi)}\xi(e_i)  ,u\rangle_\h
+\langle B_{[\tau(\xi),\xi(e_i)]^\h}\xi(e_i),u\rangle_h  \right).
\end{eqnarray*}}So\small{
\begin{eqnarray*}
Q&=&
-\langle B_{\tau(\xi)}U^\xi,u\rangle_\h+\langle B_{\xi(U^\G)}\tau(\xi),u\rangle_\h
-\sum_{i=1}^n\left(\langle B_{\xi(e_i)}[\xi(e_i),\tau(\xi)]^\h,u\rangle_\h-
\langle B_{[{\tau(\xi)},{\xi(e_i)}]^\h}\xi(e_i),u\rangle_\h\right)\\
&=&
-\langle B_{\tau(\xi)}U^\xi,u\rangle_\h+\langle B_{U^\xi-\tau(\xi)}\tau(\xi),u\rangle_\h
-\sum_{i=1}^n\left(\langle [{\xi(e_i)},[\xi(e_i),\tau(\xi)]^\h]^\h,u\rangle_\h-2
\langle B_{[{\tau(\xi)},{\xi(e_i)}]^\h}\xi(e_i),u\rangle_\h\right)\\
&\stackrel{\eqref{lc}}=&-\langle [u,\tau(\xi)]^\h,\tau(\xi)\rangle_\mathfrak{h}-\langle
[{\tau(\xi)},U^\xi]^\h,u\rangle_\mathfrak{h}\\&&
-\sum_{i=1}^n\left(\langle [{\xi(e_i)},[\xi(e_i),\tau(\xi)]^\h]^\h,u\rangle_\h
-\langle [{[{\tau(\xi)},{\xi(e_i)}]^\h},\xi(e_i)]^\h,u\rangle_\h\right.\\&&
\left.-\langle [{u},\xi(e_i)]^\h,[{\tau(\xi)},{\xi(e_i)}]^\h\rangle_\h
-\langle [u,[{\tau(\xi)},\xi(e_i)]^\h]^\h,{\xi(e_i)}\rangle_h
\right)\\
&=&-\langle [u,\tau(\xi)]^\h,\tau(\xi)\rangle_\mathfrak{h}-\langle
[{\tau(\xi)},U^\xi]^\h,u\rangle_\mathfrak{h}
+\sum_{i=1}^n\left(
\langle [{u},\xi(e_i)]^\h,[{\tau(\xi)},{\xi(e_i)}]^\h\rangle_h
+\langle [u,[{\tau(\xi)},\xi(e_i)]^\h]^\h,{\xi(e_i)}\rangle_h
\right)\\
&=&\tr(\xi^*\circ(\ad_u+\ad_u^*) \circ\ad_{\tau(\xi)} \circ\xi)
-\langle [u,\tau(\xi)]^\h,\tau(\xi)\rangle_\mathfrak{h}-\langle
[{\tau(\xi)},U^\xi]^\h,u\rangle_\mathfrak{h}.
\end{eqnarray*}}So we get the second relation.
\end{proof}
As an immediate consequence of this proposition we get the following result.
\begin{proposition}\label{prbi} Let $\phi:G\too H$ be an homomorphism between two Riemannian Lie groups. Then:
\begin{enumerate}\item[$(i)$] If the metric on $G$ is bi-invariant and $\phi$ is a submersion then $\phi$ is harmonic.
\item[$(ii)$] If the metric on $H$ is bi-invariant  then $\phi$ is biharmonic, it is harmonic when $\G$ is unimodular.

\end{enumerate}

\end{proposition}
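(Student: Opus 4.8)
The plan is to obtain both items as direct corollaries of the two trace formulas in Proposition \ref{pr2}, treating $(i)$ and $(ii)$ separately; in each case the entire content reduces to the elementary fact that the trace of the composition of a symmetric and a skew-symmetric endomorphism of a Euclidean space vanishes.

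For $(i)$, I would first record that a bi-invariant metric on $G$ means exactly that $\ad_x$ is skew-symmetric on $(\G,\prs_\G)$ for every $x\in\G$; in particular $\tr(\ad_x)=0$, so $U^\G=0$ and it suffices to prove $U^\xi=0$. Since $\phi$ is a submersion, $\xi$ is surjective, so every $u\in\h$ is of the form $u=\xi(x)$. Using that $\xi$ is a Lie algebra homomorphism one has $\ad_{\xi(x)}\circ\xi=\xi\circ\ad_x$, whence the first formula of Proposition \ref{pr2} gives
\[ \langle U^\xi,\xi(x)\rangle_\h=\tr(\xi^*\circ\ad_{\xi(x)}\circ\xi)=\tr(\xi^*\circ\xi\circ\ad_x). \]
Now $\xi^*\circ\xi$ is symmetric and $\ad_x$ is skew-symmetric, so this trace is zero. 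As $x$ is arbitrary and $\xi$ is onto, $U^\xi=0$, hence $\tau(\xi)=U^\xi-\xi(U^\G)=0$ and $\phi$ is harmonic.

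For $(ii)$, the hypothesis that the metric on $H$ is bi-invariant says $\ad_u+\ad_u^*=0$ for every $u\in\h$, i.e. $\mathrm{Kill}(h)=\h$. Then the last assertion of Proposition \ref{pr2}, namely $U^\xi\in(\mathrm{Kill}(h))^\perp$, forces $U^\xi=0$ unconditionally, so $\tau(\xi)=-\xi(U^\G)$. I would then feed the two facts $\ad_u+\ad_u^*=0$ and $U^\xi=0$ into the second formula of Proposition \ref{pr2}: the first term vanishes because $\ad_u+\ad_u^*=0$; the term $\langle[u,\tau(\xi)]^\h,\tau(\xi)\rangle_\h=\langle\ad_u\tau(\xi),\tau(\xi)\rangle_\h$ vanishes because $\ad_u$ is skew-symmetric; and the term $\langle[\tau(\xi),U^\xi]^\h,u\rangle_\h$ vanishes because $U^\xi=0$. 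Hence $\tau_2(\xi)=0$ and $\phi$ is biharmonic. Finally, if $\G$ is unimodular then $U^\G=0$, so $\tau(\xi)=-\xi(U^\G)=0$ and $\phi$ is harmonic.

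There is no serious obstacle here: everything is a formal consequence of Proposition \ref{pr2}. The only two points to get right are the homomorphism identity $\ad_{\xi(x)}\circ\xi=\xi\circ\ad_x$, which moves the computation in $(i)$ to a trace over $\G$ where the symmetric/skew-symmetric splitting can be exploited, and the bookkeeping in $(ii)$ verifying that each of the three terms in the bitension formula is killed by one of the two facts $\ad_u+\ad_u^*=0$ and $U^\xi=0$.
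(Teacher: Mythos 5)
Your proof is correct and follows essentially the same route as the paper: part $(i)$ via the homomorphism identity $\xi\circ\ad_x=\ad_{\xi(x)}\circ\xi$ and the symmetric/skew-symmetric trace argument (using surjectivity of $\xi$ to conclude $U^\xi=0$), and part $(ii)$ via $\mathrm{Kill}(h)=\h$ forcing $U^\xi=0$ and then term-by-term vanishing in the bitension formula of Proposition \ref{pr2}. The only difference is that you spell out the three-term bookkeeping in $(ii)$ that the paper leaves as "one can see easily."
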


\begin{proof}\begin{enumerate}\item[$(i)$]  Since $\xi$ is an homomorphism of Lie algebras, for any $u\in\G$, $\xi\circ\ad_{u}=\ad_{\xi(u)}\circ \xi$ and hence
\[ \langle U^\xi,\xi(u)\rangle_\h=\tr(\xi^*\circ\ad_{\xi(u)}\circ \xi)=
\tr(\xi^*\circ\xi\circ\ad_{u})=0,
 \] since $\ad_u$ is skew-symmetric and $\xi^*\circ\xi$ is symmetric. Thus $U^\xi=0$. Now we have also $U^\G=0$ and hence $\tau(\xi)=0$.
 \item[$(ii)$] If the metric on $H$ is bi-invariant then $\mathrm{Kill}(h)=\h$ and hence, according to Proposition \ref{pr2}, $U^\xi=0$. By using the expression of $\tau_2(\xi)$ given in Proposition \ref{pr2}, one can see easily that $\tau_2(\xi)=0$.
\end{enumerate}

\end{proof}

\begin{example}\label{exem} There are some interesting situations where we can apply  Proposition \ref{prbi}.\begin{enumerate}\item
 Let $H$ be a compact connected semisimple Lie group and $\pi:G\too H$ a covering homomorphism of $H$ by a Lie group $G$. Then $G$ is compact and hence unimodular.  Then, for any left invariant Riemannian $g$ on $G$ and any bi-invariant Riemannian metric $h_0$ on $H$, $\pi:(G,g)\too(H,h_0)$ is harmonic. Moreover, for any left invariant Riemannian metric $h$ on $H$ and any bi-invariant Riemannian metric $g_0$ on $G$, $\pi:(G,g_0)\too (H,h)$ is harmonic.
 \item Let $G$ be a compact Lie group and $\rho:G\too\mathrm{GL}(V,\R)$ a finite representation of $G$. Then there exists a definite positive product $\prs$ on $V$ which is $G$-invariant, thus $\rho:G\too \mathrm{SO}(V,\prs)$. Now $\mathrm{SO}(V,\prs)$ carries a bi-invariant Riemannian metric $k$ and hence for any left invariant Riemannian metric $g$ on $G$, $\rho:(G,g)\too(\mathrm{SO}(V,\prs),k)$ is harmonic.
 \item Let $(G,g)$ be a compact Lie group endowed with a bi-invariant Riemannian metric. For any closed normal subgroup $N$ of $G$, $\pi:(G,g)\too (G/N,h)$ is harmonic where $h$ is any left invariant Riemannian metric on $G/N$.
\item Let $G$ and $H$ two Lie groups where $H$ is compact and $\al:H\too\mathrm{Aut}(G)$ an homomorphism. Consider the semi-direct product $G\times_\al H$. If $h$ is a bi-invariant Riemannian metric on $H$ and $g$ any left invariant metric on $G\times_\al H$ then $(G\times_\al H,g)\too (H,h)$ is biharmonic.
 \end{enumerate}
\end{example}

\begin{remark}

Actually, one can define  harmonicity and biharmonicity of homomorphisms $\phi:G\too H$ where $G$ is a Riemannian Lie group and $H$ is an affine Lie group, i.e., a Lie group with a left invariant connection. In this case, in the expressions of $\tau(\xi)$ and $\tau_2(\xi)$ given in Proposition \ref{pr1}, $B$ is  the product associated to the left invariant  connection on $H$. The fact is that, by taking the bi-invariant connection $\na$ on $H$ called neutral and given by $\na_XY=\frac12[X,Y]$ for any left invariant vector fields $X,Y$, on can see easily that $\phi$ is biharmonic and it is harmonic if $G$ is unimodular. This generalizes $(ii)$ of Proposition \ref{prbi}. One can see \cite{jap} where harmonicity  of maps into affine Lie groups  is investigated.

\end{remark}

Recall that a K\"ahlerian Lie group is a Lie group $G$ endowed with a left invariant K\"ahler structure. This is equivalent to the existence on $\G$ of a complex structure $J:\G\too\G$ and an Euclidean product $\prs$ such that, for any $u,v\in\G$,
\[ \langle Ju,Jv\rangle=\langle u,v\rangle\esp A_uJv=JA_uv, \]where $A$ is the Levi-Civita associated to $(\G,\prs)$. An homomorphism $\phi:(G,g,J)\too (H,h,K)$ between two K\"ahlerian Lie groups is holomorphic iff, for any $u\in\G$, $\xi(Ju)=K\xi(u)$.
The following result is a particular case of a general well-known result (see \cite{lichne}).

\begin{proposition}\label{prkahler} Let $\phi:(G,g,J)\too (H,h,K)$ be an homomorphism between two K\"ahlerian Lie groups. If $\phi$ is holomorphic then $\phi$ is harmonic.

\end{proposition}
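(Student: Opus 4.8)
The plan is to reduce the statement to the Lie-algebra criterion of Proposition \ref{pr1}, namely that $\phi$ is harmonic iff $\tau(\xi)=U^\xi-\xi(U^\G)=0$, and then to run the Lie-algebra version of Lichnerowicz's classical argument. First I would introduce the algebraic second fundamental form $\beta:\G\times\G\too\h$ defined by $\beta(u,v)=B_{\xi(u)}\xi(v)-\xi(A_uv)$, where $A$ and $B$ are the Levi-Civita products of $(\G,\prs_\G)$ and $(\h,\prs_\h)$ (this is precisely the value at $e$ of $(\na d\phi)(u^\ell,v^\ell)$, so working with it is self-contained). By \eqref{ug} and \eqref{uxi} one has $\tau(\xi)=\sum_{i=1}^n\beta(e_i,e_i)$ for any orthonormal basis $(e_i)_{i=1}^n$ of $\G$, so it suffices to show that this trace vanishes.

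Next I would record two structural facts about $\beta$. First, $\beta$ is symmetric: indeed $\beta(u,v)-\beta(v,u)=[\xi(u),\xi(v)]^\h-\xi([u,v]^\G)$, which vanishes because $\xi$ is a homomorphism of Lie algebras and because $A_uv-A_vu=[u,v]^\G$, $B_pq-B_qp=[p,q]^\h$. Second, I would establish the intertwining relation $\beta(u,Jv)=K\beta(u,v)$: expanding $\beta(u,Jv)=B_{\xi(u)}\xi(Jv)-\xi(A_uJv)$ and applying holomorphicity $\xi\circ J=K\circ\xi$, the Kähler condition $A_uJv=JA_uv$ on $G$, and the Kähler condition $B_pKq=KB_pq$ on $H$, every term factors through $K$, yielding $\beta(u,Jv)=K\beta(u,v)$.

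Finally I would choose a $J$-adapted orthonormal basis $(e_1,\dots,e_m,Je_1,\dots,Je_m)$ of $\G$, which exists because $J$ is an orthogonal complex structure and hence $\G$ splits into mutually orthogonal $J$-invariant planes $\spa\{e_k,Je_k\}$. Combining symmetry with the intertwining relation gives, for each $k$,
\[
\beta(Je_k,Je_k)=K\beta(Je_k,e_k)=K\beta(e_k,Je_k)=K^2\beta(e_k,e_k)=-\beta(e_k,e_k).
\]
Summing over the adapted basis, the contributions cancel in pairs and $\tau(\xi)=\sum_{k=1}^m\bigl(\beta(e_k,e_k)+\beta(Je_k,Je_k)\bigr)=0$, so $\phi$ is harmonic.

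All the computations here are routine bilinear algebra; the only points that require any care are the identification $\tau(\xi)=\sum_i\beta(e_i,e_i)$ (equivalently, that $\beta$ is the restriction of $\na d\phi$ to left-invariant fields) and the existence of a $J$-adapted orthonormal basis. Neither is a genuine obstacle, which is exactly why the result can be stated as a special case of the general Lichnerowicz theorem; the point of the argument above is simply to make it transparent at the level of Euclidean Lie algebras.
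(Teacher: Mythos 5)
Your proof is correct and follows essentially the same route as the paper: both arguments compute $\tau(\xi)$ over a $J$-adapted orthonormal basis $(e_k,Je_k)$ and show the two contributions cancel in pairs using holomorphicity, the two K\"ahler conditions, torsion-freeness, and the fact that $\xi$ is a Lie algebra homomorphism. The only difference is presentational: you package the cancellation through the symmetry and $K$-intertwining of the second fundamental form $\beta$, whereas the paper carries out the same identities in one direct chain of equalities.
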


\begin{proof} There exists an orthonormal basis of $\G$ having the form $(e_i,Je_i)_{i=1}^n$. We have
\begin{eqnarray*}
\tau(\xi)&=&\sum_{i=1}^n(B_{\xi(e_i)}\xi(e_i)-\xi(A_{e_i}e_i))+
\sum_{i=1}^n(B_{\xi(Je_i)}\xi(Je_i)-\xi(A_{Je_i}Je_i))\\
&=&\sum_{i=1}^n(B_{\xi(e_i)}\xi(e_i)-\xi(A_{e_i}e_i))+
\sum_{i=1}^n(B_{K\xi(e_i)}K\xi(e_i)-\xi(JA_{Je_i}e_i))\\
&=&\sum_{i=1}^n(B_{\xi(e_i)}\xi(e_i)-\xi(A_{e_i}e_i))+
\sum_{i=1}^n(KB_{K\xi(e_i)}\xi(e_i)-K\xi(A_{Je_i}e_i))\\
&=&\sum_{i=1}^n(B_{\xi(e_i)}\xi(e_i)-\xi(A_{e_i}e_i))+
\sum_{i=1}^n(K[{K\xi(e_i)},\xi(e_i)]\\&&+KB_{\xi(e_i)}K\xi(e_i)
-K\xi([{Je_i},e_i]))-K\xi(A_{e_i}Je_i)\\
&=&\sum_{i=1}^n(B_{\xi(e_i)}\xi(e_i)-\xi(A_{e_i}e_i))+
\sum_{i=1}^n(K[{\xi(Je_i)},\xi(e_i)]-B_{\xi(e_i)}\xi(e_i)
-K[\xi({Je_i}),\xi(e_i)]+\xi(A_{e_i}e_i))\\
&=&0.\qedhere
\end{eqnarray*}
\end{proof}

\subsection{Riemannian immersions between Riemannian Lie groups}

It is a well known result that a Riemannian immersion is harmonic iff it is minimal (see \cite{eells}). We recover this fact in our context in an easy way.
Let $(\G,\prs_\G)$ be an Euclidean Lie algebra and $\G_0$ a subalgebra of $\G$. If $A$ is the Levi-Civita product of $(\G,\prs_\G)$, then for any $u,v\in\G_0$,
\begin{equation*}
A_uv=A_u^0v+h(u,v),
\end{equation*}where $A^0$ is the Levi-Civita product of $(\G_0,\prs_{\G_0})$ ($\prs_{\G_0}$ is the restriction of $\prs_{\G}$ to $\G_0$) and $h:\G_0\times\G_0\too\G_0^\perp$ is bilinear symmetric. It is called the second fundamental form and its   trace  with respect to $\prs_\G$ is the vector $H^{\G_0}\in\G_0^\perp$  given by
\begin{equation*}\label{H}
H^{\G_0}=\sum_{i=1}^nh(e_i,e_i),
\end{equation*}where $(e_i)$ is an orthonormal basis of $\G_0$. This vector is called the mean curvature vector of the inclusion of $\G_0$ in 
$(\G,\prs_\G)$. So we get the following proposition.
\begin{proposition}\label{pr3} Let $\phi:G\too H$ be an homomorphism between two Riemannian Lie groups which is also a Riemannian immersion. Then $\phi$ is harmonic iff $H^{\xi(\G)}=0$.

\end{proposition}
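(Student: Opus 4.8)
The plan is to reduce harmonicity to the vanishing of the tension vector $\tau(\xi)$ via Proposition \ref{pr1}, and then to identify $\tau(\xi)$ with the mean curvature vector $H^{\xi(\G)}$ by splitting the Levi-Civita product $B$ of $\h$ along the subalgebra $\xi(\G)$. By Proposition \ref{pr1}, $\phi$ is harmonic iff $\tau(\xi)=U^\xi-\xi(U^\G)=0$, so the whole argument amounts to showing $\tau(\xi)=H^{\xi(\G)}$.

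First I would record the structural consequences of $\phi$ being a Riemannian immersion. Its differential $\xi:\G\too\h$ is then an injective Lie algebra homomorphism which is isometric, so $\G_0:=\xi(\G)$ is a subalgebra of $\h$, the restriction of $\prs_\h$ makes $\xi:(\G,\prs_\G)\too(\xi(\G),\prs_{\xi(\G)})$ an isometric Lie algebra isomorphism, and if $(e_i)_{i=1}^n$ is an orthonormal basis of $\G$ then $(\xi(e_i))_{i=1}^n$ is an orthonormal basis of $\xi(\G)$. Applying to $\G_0=\xi(\G)$ the decomposition of $B$ stated before the proposition, namely $B_{\xi(e_i)}\xi(e_i)=B^0_{\xi(e_i)}\xi(e_i)+h(\xi(e_i),\xi(e_i))$ with $B^0$ the Levi-Civita product of $(\xi(\G),\prs_{\xi(\G)})$ and $h$ its second fundamental form, I would sum over $i$ and use \eqref{ug} (applied to the Euclidean Lie algebra $\xi(\G)$) to get
\begin{equation*}
U^\xi=\sum_{i=1}^n B_{\xi(e_i)}\xi(e_i)=\sum_{i=1}^n\left(B^0_{\xi(e_i)}\xi(e_i)+h(\xi(e_i),\xi(e_i))\right)=U^{\xi(\G)}+H^{\xi(\G)},
\end{equation*}
where $U^{\xi(\G)}=\sum_{i=1}^n B^0_{\xi(e_i)}\xi(e_i)$ and $H^{\xi(\G)}=\sum_{i=1}^n h(\xi(e_i),\xi(e_i))$ is precisely the mean curvature vector of the inclusion $\xi(\G)\subset\h$.

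It remains to handle the term $\xi(U^\G)$, and this is the one genuinely non-formal step. The claim is that $\xi$ intertwines the Levi-Civita products, i.e. $\xi(A_uv)=B^0_{\xi(u)}\xi(v)$ for all $u,v\in\G$; this follows from the characterization \eqref{lc}, which determines the Levi-Civita product of a Euclidean Lie algebra purely from its bracket and its inner product, together with the fact that $\xi$ preserves both. Consequently $\xi(U^\G)=\sum_{i=1}^n\xi(A_{e_i}e_i)=\sum_{i=1}^n B^0_{\xi(e_i)}\xi(e_i)=U^{\xi(\G)}$. Combining this with the previous display gives $\tau(\xi)=U^\xi-\xi(U^\G)=H^{\xi(\G)}$, so that $\phi$ is harmonic iff $H^{\xi(\G)}=0$, as desired. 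The only point requiring care is verifying that the restricted product $B^0$ coincides with the intrinsic Levi-Civita product of $\xi(\G)$ — which is exactly the content of the pre-proposition decomposition $B_uv=B^0_uv+h(u,v)$ — so that the intertwining identity and the appearance of $U^{\xi(\G)}$ on both sides cancel cleanly.
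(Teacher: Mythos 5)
Your proposal is correct and follows exactly the route the paper intends: the paper states Proposition \ref{pr3} as an immediate consequence of the decomposition $B_uv=B^0_uv+h(u,v)$ along the subalgebra $\xi(\G)\subset\h$, and your computation $\tau(\xi)=U^\xi-\xi(U^\G)=(U^{\xi(\G)}+H^{\xi(\G)})-U^{\xi(\G)}=H^{\xi(\G)}$ is precisely the omitted verification, with the key intertwining step $\xi(A_uv)=B^0_{\xi(u)}\xi(v)$ correctly justified from \eqref{lc} since $\xi$ is an isometric Lie algebra isomorphism onto its image.
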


The following two propositions can be used to build examples of minimal Riemannian immersions.
\begin{proposition}\label{pr4} Let $\phi:G\too H$ be an homomorphism between two Riemannian Lie groups. Suppose that $\phi$ is a Riemannian immersion, both $\G$ and $\h$ are unimodular and $\dim H=\dim G+1$. Then $\phi$ is harmonic.

\end{proposition}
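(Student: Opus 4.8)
The plan is to invoke Proposition \ref{pr3}, which reduces harmonicity to the vanishing of the mean curvature vector $H^{\xi(\G)}$ of the subalgebra $\xi(\G)\subset\h$. Since $\phi$ is a Riemannian immersion, $\xi$ is an isometric injection, so $\xi(\G)$ is a subalgebra of $\h$ of codimension $\dim H-\dim G=1$. Hence its orthogonal complement $\xi(\G)^\perp$ is a line; fix a unit vector $\nu$ spanning it. Because $H^{\xi(\G)}\in\xi(\G)^\perp=\R\nu$, it suffices to prove that $\langle H^{\xi(\G)},\nu\rangle_\h=0$.

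To compute this, let $(f_i)_{i=1}^n$ be an orthonormal basis of $\xi(\G)$ (for instance $f_i=\xi(e_i)$ for an orthonormal basis $(e_i)$ of $\G$), so that $(f_1,\dots,f_n,\nu)$ is an orthonormal basis of $\h$. By definition $H^{\xi(\G)}=\sum_i h(f_i,f_i)$, where $h(u,v)$ is the component of $B_uv$ along $\nu$ and $B$ is the Levi-Civita product of $\h$. Applying the defining formula \eqref{lc} with $u=v=f_i$ and $w=\nu$, the bracket $[f_i,f_i]^\h$ vanishes and the two remaining terms are equal, giving $\langle B_{f_i}f_i,\nu\rangle_\h=\langle[\nu,f_i]^\h,f_i\rangle_\h$. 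Summing over $i$ yields $\langle H^{\xi(\G)},\nu\rangle_\h=\sum_{i=1}^n\langle\ad_\nu f_i,f_i\rangle_\h$.

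The final step is to recognize this sum as a trace. Adjoining the term $\langle\ad_\nu\nu,\nu\rangle_\h=\langle[\nu,\nu]^\h,\nu\rangle_\h=0$ completes the orthonormal basis of $\h$, so $\sum_{i=1}^n\langle\ad_\nu f_i,f_i\rangle_\h=\tr(\ad_\nu)$, the trace being taken in $\h$. Since $\h$ is unimodular this trace vanishes, whence $\langle H^{\xi(\G)},\nu\rangle_\h=0$ and therefore $H^{\xi(\G)}=0$; Proposition \ref{pr3} then gives that $\phi$ is harmonic. There is no serious obstacle in this argument: the only delicate point is the bookkeeping that identifies the codimension-one mean curvature with $\tr(\ad_\nu)$, together with the fact that $\xi$ intertwines the Levi-Civita products of $\G$ and $\xi(\G)$. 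I note in passing that the argument uses only the unimodularity of the target $\h$ (and the codimension-one hypothesis); the unimodularity of $\G$ is not actually invoked. This can also be seen directly from the identity $\tau(\xi)=U^\xi-\xi(U^\G)=H^{\xi(\G)}$ combined with $\langle\xi(U^\G),\nu\rangle_\h=\langle U^\G,\xi^*(\nu)\rangle_\G=0$, since $\xi^*(\nu)=0$ because $\nu\perp\xi(\G)$, while $\langle U^\xi,\nu\rangle_\h=\tr(\xi^*\circ\ad_\nu\circ\xi)=\tr(\ad_\nu)$ by Proposition \ref{pr2}.
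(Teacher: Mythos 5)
Your proof is correct. It follows the same basic strategy as the paper's --- identify $\tau(\xi)$ with the mean curvature vector $H^{\xi(\G)}$ of the codimension-one subalgebra $\xi(\G)$, note that it lies in the line $\xi(\G)^\perp$, and kill its single component using unimodularity --- but the two arguments diverge in how that component is computed. The paper first uses unimodularity of $\G$ to write $\tau(\xi)=U^\xi=\sum_i B_{\xi(e_i)}\xi(e_i)$, then unimodularity of $\h$ together with \eqref{ug} to get $\tau(\xi)=U^\h-B_ff=-B_ff$, and finally concludes from $\tau(\xi)=\al f$ that $\langle\tau(\xi),\tau(\xi)\rangle_\h=-\al\langle B_ff,f\rangle_\h=0$ by \eqref{lc}. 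You instead compute the normal component directly, $\langle H^{\xi(\G)},\nu\rangle_\h=\sum_i\langle[\nu,f_i]^\h,f_i\rangle_\h=\tr(\ad_\nu)$, and your identification of this sum with the full trace (after adjoining the vanishing term $\langle[\nu,\nu]^\h,\nu\rangle_\h$) is correct. The two computations are essentially equivalent --- both amount to the identity $\langle U^\h,\nu\rangle_\h=\langle H^{\xi(\G)},\nu\rangle_\h$ --- but yours isolates exactly which hypothesis does the work: only $\tr(\ad_\nu)=0$ is needed, so the unimodularity of $\G$ is indeed redundant. In other words, any codimension-one subalgebra of a unimodular Euclidean Lie algebra is minimal, and Proposition \ref{pr3} (which requires no unimodularity) finishes the argument. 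This is a genuine, if small, sharpening of the statement, since the paper's proof really does invoke $U^\G=0$ at its first step.
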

\begin{proof} Choose an orthonormal basis $(e_1,\ldots,e_n)$ of $\G$ and complete by $f$ to get an orthonormal basis $(\xi(e_1),\ldots,\xi(e_n),f)$ of $\h$. On the other hand, we have $\tau(\xi)=H^{\xi(\G)}=\al f$. We have, by using \eqref{ug},
\[ \tau(\xi)=\sum_{i=1}^nB_{\xi(e_i)}\xi(e_i)=U^\h-B_ff=-B_ff. \]
So $\langle \tau(\xi),\tau(\xi)\rangle_\h=-\al\langle B_ff,f\rangle_\h\stackrel{\eqref{lc}}=0$. Thus $\tau(\xi)=0$.
\end{proof}

\begin{proposition}\label{pr4bis} Let $\phi:G\too H$ be an homomorphism between two Riemannian Lie groups. Suppose that $\phi$ is a Riemannian immersion,  $[\G,\G]=\G$, any derivation of $\G$ is inner and $\xi(\G)$ is an ideal of $\h$. Then $\phi$ is harmonic.

\end{proposition}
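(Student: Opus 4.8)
The plan is to reduce everything to the vanishing of the mean curvature vector and then to compute its pairing with an arbitrary normal vector. By Proposition \ref{pr3}, since $\phi$ is a Riemannian immersion, it is harmonic iff $H^{\xi(\G)}=0$, and this mean curvature vector is exactly $\tau(\xi)$. Because $\phi$ is a Riemannian immersion, $\xi$ is an injective Lie algebra homomorphism which is an isometry onto its image, so I would identify $\G$ with the subalgebra $\G_0:=\xi(\G)$ of $\h$ equipped with the induced scalar product. Under this identification the three hypotheses read: $\G_0$ is perfect, i.e. $[\G_0,\G_0]^\h=\xi([\G,\G]^\G)=\xi(\G)=\G_0$; every derivation of $\G_0$ is inner (transported from $\G$ through the isomorphism $\xi$); and $\G_0$ is an ideal of $\h$. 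Recall also from Proposition \ref{pr1} that $\tau(\xi)=U^\xi-\xi(U^\G)$ with $U^\xi=\sum_{i=1}^n B_{\xi(e_i)}\xi(e_i)$, and that $\xi(U^\G)\in\G_0$, so $\tau(\xi)$ lies in $\G_0^\perp$ and its pairing with any $f\in\G_0^\perp$ only involves $U^\xi$.

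Next I would compute, for $f\in\G_0^\perp$ and an orthonormal basis $(e_i)$ of $\G$, the quantity $\langle\tau(\xi),f\rangle_\h=\sum_{i=1}^n\langle B_{\xi(e_i)}\xi(e_i),f\rangle_\h$. Applying the Levi-Civita formula \eqref{lc} with $u=v=\xi(e_i)$ and $w=f$, the first bracket term $\langle[\xi(e_i),\xi(e_i)]^\h,f\rangle_\h$ vanishes and the remaining two coincide, giving $\langle B_{\xi(e_i)}\xi(e_i),f\rangle_\h=\langle[f,\xi(e_i)]^\h,\xi(e_i)\rangle_\h=\langle\ad_f\xi(e_i),\xi(e_i)\rangle_\h$. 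Since $\G_0$ is an ideal, $\ad_f$ maps $\G_0$ into $\G_0$, and because $(\xi(e_i))$ is an orthonormal basis of $\G_0$ the sum is precisely the trace of the restricted operator, so $\langle\tau(\xi),f\rangle_\h=\tr\!\big(\ad_f|_{\G_0}\big)$.

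The final step is to show this trace vanishes, and here the three hypotheses combine. The ideal property together with the Jacobi identity shows that $\ad_f|_{\G_0}$ is a derivation of $\G_0$; since every derivation of $\G_0$ is inner, there is $w\in\G_0$ with $\ad_f|_{\G_0}=\ad_w|_{\G_0}$, whence $\tr(\ad_f|_{\G_0})=\tr(\ad_w|_{\G_0})$. Now I would invoke perfectness: the linear form $w\mapsto\tr(\ad_w|_{\G_0})$ on $\G_0$ annihilates every bracket, because for $a,b\in\G_0$ one has $\ad_{[a,b]}|_{\G_0}=[\ad_a|_{\G_0},\ad_b|_{\G_0}]$ and the trace of a commutator is zero; as $[\G_0,\G_0]=\G_0$, this form is identically zero. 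Hence $\langle\tau(\xi),f\rangle_\h=0$ for every $f\in\G_0^\perp$, and since $\tau(\xi)\in\G_0^\perp$ we conclude $\tau(\xi)=0$, so $\phi$ is harmonic.

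I do not expect any serious obstacle: once the identification is made, the only real content is recognizing that the normal component of $U^\xi$ equals $\tr(\ad_f|_{\G_0})$ and then chaining the ideal, inner-derivation and perfectness hypotheses to annihilate it. The mildly delicate point is bookkeeping—verifying that $\ad_f|_{\G_0}$ is genuinely a derivation of $\G_0$ and that passing to an inner derivation lands us inside $\G_0$ so that perfectness can be used—but each of these is routine.
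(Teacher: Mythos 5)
Your proof is correct and follows essentially the same route as the paper: pair $H^{\xi(\G)}$ with a normal vector $f$, identify the result as $\tr\bigl(\ad_f|_{\xi(\G)}\bigr)$, use the ideal hypothesis to see this restriction is a derivation, hence inner, hence traceless. You are in fact slightly more careful than the paper at the last step, where you make explicit that the vanishing of the trace of an inner derivation relies on the perfectness hypothesis $[\G,\G]=\G$ (equivalently, unimodularity of $\xi(\G)$), a point the paper's proof leaves implicit.
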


\begin{proof} Choose an orthonormal basis $(e_i)_{i=1}^n$ of $\G$. By using Proposition \ref{pr2}, we get for any $u\in\xi(\G)^\perp$,
\begin{eqnarray*}
\langle H^{\xi(\G)},u\rangle_\h&=&\sum_{i=1}^n\langle\ad_u\xi(e_i),\xi(e_i)\rangle_\h=\tr(\wi\ad_u),
\end{eqnarray*}where $\wi\ad_u$ is the restriction of $\ad_u$ to $\xi(\G)$. Now $\xi(\G)$ being an ideal, $\wi\ad_u$ is a derivation of $\xi(\G)$ an hence from the hypothesis it is inner  and $\tr(\wi\ad_u)=0$. Finally, $H^{\xi(\G)}=0$ which proves the proposition.
\end{proof}

\begin{remark}\label{rem2}\begin{enumerate}\item The class of Lie algebras $\G$ satisfying $[\G,\G]=\G$ and any derivation of $\G$ is inner contains the class of semi-simple Lie algebras and, actually, it is more large than this subclass (see \cite{benayadi}).

 \item In Proposition \ref{pr4}, if $H$ is nilpotent then both $\G$ and $\h$ are unimodular. This can be used to construct many examples of minimal Riemannian immersions into Riemannian nilmanifolds. For instance let $\h$ be the 5-dimensional nilpotent Lie algebra whose Lie brackets are given by 
\[ [e_1,e_2]=e_3,\;[e_1,e_3]=e_5,\;[e_2,e_4]=e_5. \] $\G=\spa\{e_1,e_2,e_3,e_5\}$ is a subalgebra of $\h$. If $G$ is the connected and simply connected Lie group associated to $\G$ and $H$ the connected subgroup associated to $\h$ then, for any left invariant Riemannian metric on $G$, the inclusion $H\too G$ is a minimal Riemannian immersion. Moreover, according to Malcev's Theorem (see \cite{raghun}) $G$ has uniform lattices, i.e., there exists a discrete subgroup $\Ga$ of $G$ such that $\Ga/G$ is compact. Thus we get a minimal immersion into a compact nilmanifold $H\too \Ga/G$. 
\end{enumerate}

\end{remark}

 \section{Harmonic automorphisms of a Riemannian Lie group}\label{section3}

 Denote by $\mathcal{M}^\ell(G)$ the set of all left invariant Riemannian metrics on a Lie group $G$ and fix  $g\in \mathcal{M}^\ell(G)$. Recall that $\mathrm{Inisom}(g)$ is the subgroup of $G$ consisting of $a\in G$ such that $i_a=L_a\circ R_{a^{-1}}$ is an isometry, $\mathrm{Kill}(g)=\{u\in\G,\ad_u+\ad_u^*=0\}$ its Lie algebra. We denote by $H(g)$ the set consisting of $a\in G$ such that $i_a$ is
 harmonic. We have obviously $Z(G)\subset \mathrm{Inisom}(g)\subset H(g)$, $\mathrm{Inisom}(g)H(g)\subset H(g)$ and $H(g)\mathrm{Inisom}(g)\subset H(g)$ . Denote by $\pi:G\too G/\mathrm{Inisom}(g)$ the natural projection. Note that if $\phi$ is an automorphism de $G$, $H(\phi^*g)=\phi^{-1}(H(g))$. The set $H(g)$ has been investigated first by Park in \cite{park} in the case of a semisimple compact Lie group. In particular, he determined $H(g)$ for any left invariant Riemannian metric on $\mathrm{SU}(2)$. If one look carefully to the result of Park, one can see easily, in the case of $\mathrm{SU}(2)$, that the cardinal of
 $H(g)/\mathrm{Inisom}(g)$ is finite. We will show now that this result is true on any compact Lie group. This is based on the following lemma.
  
 \begin{lemma}\label{park1}If $G$ is unimodular then  $\mathrm{Inisom}(g)$ is open in $H(g)$, i.e., there exists an open set $U\subset G$ such that $U\cap H(g)=\mathrm{Inisom}(g)$. In particular, the quotient topology on $H(g)/\mathrm{Inisom}(g)$ is discrete.
 
 \end{lemma}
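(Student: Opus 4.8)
The plan is to realize $H(g)$ as the zero set of the tension field, viewed as a map on the group, and to show that at the identity this zero set is cut out transversally with tangent space exactly $\mathrm{Kill}(g)$. First I would record, using Propositions \ref{pr1} and \ref{pr2} together with unimodularity (so that $U^\G=0$), that for $a\in G$ the inner automorphism $i_a$ has differential $\Ad_a$ and tension field $\tau(\Ad_a)=U^{\Ad_a}$, where $\langle U^{\Ad_a},u\rangle_\G=\tr(\Ad_a^*\ad_u\Ad_a)$. Hence $H(g)=f^{-1}(0)$ for the real analytic map $f:G\too\G$ given by $\langle f(a),u\rangle_\G=\tr(\Ad_a^*\ad_u\Ad_a)=\tr(\ad_u P_a)$ with $P_a:=\Ad_a\Ad_a^*$, and $\mathrm{Inisom}(g)=\{a:P_a=\mathrm{id}_\G\}\subset H(g)$. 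Writing $D_u:=\ad_u+\ad_u^*$ and $\mathcal D:=\{D_u:u\in\G\}$ inside the space of symmetric endomorphisms with the pairing $(S,T)\mapsto\tr(ST)$, the condition $f(a)=0$ is equivalent to $P_a\in\mathcal D^{\perp}$ (and $\mathrm{id}_\G\in\mathcal D^\perp$ precisely by unimodularity).

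The heart of the argument is the computation of $df_e$. Differentiating along $a=\exp(tX)$ and using $\Ad_{\exp tX}=e^{t\,\ad_X}$ gives $\langle df_e(X),u\rangle_\G=\tr(D_X\,\ad_u)=\tfrac12\tr(D_XD_u)$, the last equality because the skew part of $\ad_u$ pairs to zero against the symmetric $D_X$. Thus $df_e(X)=0$ means $D_X\perp\mathcal D$; but $D_X\in\mathcal D$ by definition, so $D_X\in\mathcal D\cap\mathcal D^\perp=\{0\}$, i.e. $X\in\mathrm{Kill}(g)$. The reverse inclusion being clear, I obtain $\ker df_e=\mathrm{Kill}(g)=T_e\,\mathrm{Inisom}(g)$. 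This is exactly where unimodularity enters and is the crux of the identity–level analysis.

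From here the slice argument is routine. I would fix a complement $\G=\mathrm{Kill}(g)\oplus W$; since $df_e|_W$ is injective, $f\circ\exp|_W$ is an immersion at $0$, hence locally injective, so $f(\exp w)=0$ for small $w\in W$ forces $w=0$. As $(\kappa,w)\mapsto\exp(\kappa)\exp(w)$ is a local diffeomorphism near $0$, every $a$ near $e$ factors uniquely as $a=\exp(\kappa)\exp(w)$ with $\exp(\kappa)\in\mathrm{Inisom}(g)$; if moreover $a\in H(g)$ then $\exp(w)=\exp(-\kappa)a\in\mathrm{Inisom}(g)H(g)\subset H(g)$, whence $w=0$ and $a\in\mathrm{Inisom}(g)$. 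This yields a neighbourhood $V$ of $e$ with $V\cap H(g)\subset\mathrm{Inisom}(g)$, and then $U:=\mathrm{Inisom}(g)\,V$ is open with $U\cap H(g)=\mathrm{Inisom}(g)$, which proves the first assertion.

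For the quotient I would first note that $P_{a\sigma}=P_a$ for $\sigma\in\mathrm{Inisom}(g)$ and that $P_a=P_b$ forces $b^{-1}a\in\mathrm{Inisom}(g)$; hence $a\,\mathrm{Inisom}(g)\mapsto P_a$ is a well-defined continuous injection of $H(g)/\mathrm{Inisom}(g)$ onto the slice $\{P_a:a\in H(g)\}=\{P_a:a\in G\}\cap\mathcal D^\perp$, sending the base point to $\mathrm{id}_\G$, which is isolated because $\mathcal D\oplus\mathcal D^\perp$ is the whole space (strong transversality at $\mathrm{id}_\G$), recovering openness of the identity coset. The remaining point, and the main obstacle, is to promote this from the identity coset to an arbitrary coset: at a general $a_0\in H(g)$ the tangent to the slice is $\Ad_{a_0}\mathcal D\,\Ad_{a_0}^*$, so openness of $a_0\,\mathrm{Inisom}(g)$ amounts to $\Ad_{a_0}\mathcal D\,\Ad_{a_0}^*\cap\mathcal D^\perp=\{0\}$. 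These two subspaces have complementary dimensions, so this is a genuine transversality statement that is immediate at $e$ but in general must be extracted from the constraint $P_{a_0}\in\mathcal D^\perp$ that defines $H(g)$; establishing it at every point of $H(g)$ is what delivers discreteness of $H(g)/\mathrm{Inisom}(g)$.
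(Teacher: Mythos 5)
Your proof of the first (and main) assertion is correct and is essentially the paper's own argument. You use the same function $a\mapsto\bigl(u\mapsto\tr(\Ad_a^*\circ\ad_u\circ\Ad_a)\bigr)$, whose zero set is $H(g)$ precisely because $U^\G=0$, and the key computation coincides with the paper's: the differential at $e$ is $X\mapsto\bigl(u\mapsto\tr((\ad_X+\ad_X^*)\circ\ad_u)\bigr)$, and its kernel is $\mathrm{Kill}(g)$ because the symmetric endomorphism $\ad_X+\ad_X^*$, being orthogonal to all $\ad_u$, is orthogonal to itself. The only difference is packaging: the paper notes that $\al$ factors through $G/\mathrm{Inisom}(g)$ and that the factored map $\wi\al$ is an immersion at $\pi(e)$, hence locally injective there, while you run a direct slice argument through $\exp(\mathrm{Kill}(g))\exp(W)$ combined with the absorption property $\mathrm{Inisom}(g)H(g)\subset H(g)$. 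Both yield an open $U$ with $U\cap H(g)=\mathrm{Inisom}(g)$.

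On the discreteness of $H(g)/\mathrm{Inisom}(g)$: the obstacle you identify is real. Since $H(g)$ is not a subgroup, openness of the identity coset does not translate by left multiplication to the coset $a_0\,\mathrm{Inisom}(g)$, and isolating $\pi(a_0)$ in $\wi\al^{-1}(0)$ requires exactly the transversality you state, namely that $\Ad_{a_0}D\Ad_{a_0}^*\perp\{\ad_v+\ad_v^*:v\in\G\}$ forces $D=0$ for $D$ in that same set — which is immediate at $a_0=e$ but not at a general point of $H(g)$. You should know, however, that the paper's proof does not supply this step either: it establishes local injectivity of $\wi\al$ only near $\pi(e)$ and then asserts discreteness of the whole quotient as an immediate consequence (and Theorem \ref{finite} uses the full discreteness). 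So your proposal is not weaker than the published argument on this point; it just makes explicit where the remaining work lies.
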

 
 \begin{proof} Since $G$ is unimodular, according to the first relation in Proposition \ref{pr2},  $a\in H(g)$ iff
  \[ \forall u\in\G,\; \tr(\mathrm{Ad}_a^*\circ\ad_u\circ
  \mathrm{Ad}_a)=0.\]Define
  $\al:G\too\G^*$ by
  $\al(a)(u)=\tr(\mathrm{Ad}_a^*\circ\ad_u\circ
  \mathrm{Ad}_a).$
 Thus $H(g)=\al^{-1}(0)$. The differential of $\al$ at $a\in H(g)$ is given by
   \begin{equation}\label{eqdiff} d_a\al(T_eR_a(u))(v)=\tr(\mathrm{Ad}_a^*\circ\ad_u^*\circ\ad_v\circ
   \mathrm{Ad}_a)+\tr(\mathrm{Ad}_a^*\circ\ad_v\circ\ad_u\circ
    \mathrm{Ad}_a). \end{equation}If $a\in \mathrm{Inisom}(g)$ then $\Ad_a^*=\Ad_{a^{-1}}$ and hence
    \[ d_a\al(T_eR_a(u))(v)=\tr((\ad_u^*+\ad_u)\circ\ad_v). \]
    So $T_eR_a(u)\in\ker d_a\al$ iff, for any $v\in \G$, $\tr((\ad_u^*+\ad_u)\circ\ad_v)=0$. In particular, we get $\tr((\ad_u^*+\ad_u)\circ\ad_u)=0$. By using the properties of the trace we get also $\tr((\ad_u^*+\ad_u)\circ\ad_u^*)=0$ and hence $\ad_u^*+\ad_u=0$. Thus $\ker d_a\al=T_eR_a(\mathrm{Kill}(g))$. Now, it is easy to see that $\al$ factor to give a smooth map $\wi\al:G/\mathrm{Inisom}(g)\too \G^*$ and from what above, we get that $\wi\al$ is an immersion at $\pi(e)$ and hence there exists an open set $\wi U\subset G/\mathrm{Inisom}(g)$ containing $\pi(e)$ such that the restriction of $\wi\al$ to $\wi U$ is injective. Thus  $U=\pi^{-1}(\wi U)$ satisfies the conclusion of the theorem.  
  \end{proof}
 If $G$ is compact then $\mathrm{Inisom}(g)$ and $H(g)$ are compact, so we get the following  interesting result.
 \begin{theorem}\label{finite} If $G$ is compact then $H(g)/\mathrm{Inisom}(g)$ has a finite cardinal.
 
 \end{theorem}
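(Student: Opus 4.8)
The plan is to obtain the conclusion as a short topological consequence of Lemma \ref{park1}, the guiding principle being that a space which is at once compact and discrete is necessarily finite. First I would record the one extra input needed to invoke the lemma: a compact Lie group is unimodular. Indeed, the modular function is a continuous homomorphism from $G$ into the multiplicative group $\R_{>0}$, and the image of a compact group under such a homomorphism is a compact subgroup of $\R_{>0}$, hence trivial; so $\tr(\ad_u)=0$ for all $u\in\G$. Consequently Lemma \ref{park1} applies and tells us that the quotient topology on $H(g)/\mathrm{Inisom}(g)$ is discrete.

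It then remains to see that $H(g)/\mathrm{Inisom}(g)$ is compact, after which discreteness forces finiteness. Here I would use the description of $H(g)$ from the proof of Lemma \ref{park1}, namely $H(g)=\al^{-1}(0)$ for the continuous map $\al:G\too\G^*$. Since $\{0\}$ is closed and $\al$ is continuous, $H(g)$ is a closed subset of the compact group $G$, hence itself compact. Moreover, as noted before the lemma, $H(g)\,\mathrm{Inisom}(g)\subset H(g)$, and since $\mathrm{Inisom}(g)$ contains the identity this gives $H(g)\,\mathrm{Inisom}(g)=H(g)$; thus $H(g)$ is a union of right $\mathrm{Inisom}(g)$-cosets, i.e.\ it is saturated for the right action of $\mathrm{Inisom}(g)$. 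Therefore the canonical projection $q:H(g)\too H(g)/\mathrm{Inisom}(g)$ is well defined, continuous and surjective, and $H(g)/\mathrm{Inisom}(g)$ is the continuous image of the compact set $H(g)$, hence compact.

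Combining the two steps, $H(g)/\mathrm{Inisom}(g)$ is a compact discrete space and is therefore finite, which is the assertion of the theorem. I do not expect a genuine obstacle here, since all the analytic content has already been absorbed into Lemma \ref{park1}; the present argument is purely the ``compact plus discrete equals finite'' wrap-up. The only points that deserve a moment's verification are that $H(g)$ is closed (so that it inherits compactness from $G$) and that it is saturated under the $\mathrm{Inisom}(g)$-action (so that the quotient is legitimately realized as a continuous image of a compact set), both of which are immediate from the facts recalled above.
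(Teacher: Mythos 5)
Your proof is correct and follows essentially the same route as the paper, which simply observes that compactness of $G$ makes $H(g)$ compact and then invokes the discreteness of the quotient from Lemma \ref{park1}. The details you supply (unimodularity of compact groups, closedness of $H(g)=\al^{-1}(0)$, saturation under the $\mathrm{Inisom}(g)$-action) are exactly the ones the paper leaves implicit.
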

 
 In \cite{park}, Park showed that if for a left invariant Riemannian metric $g$ on $\mathrm{SU}(2)$ any inner automorphism is harmonic then $g$ is actually bi-invariant. The following theorem generalizes this result to any connected Lie group.
 
 \begin{theorem}\label{park} Let $(G,g)$ be a connected Riemannian Lie group such that $H(g)=G$. Then $g$ is bi-invariant.
 
 \end{theorem}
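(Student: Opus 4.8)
The plan is to reduce the hypothesis $H(g)=G$ to a single infinitesimal identity on $\G$ and then to extract bi-invariance from a positivity argument. Since $i_a$ is an automorphism, its differential at $e$ is $\Ad_a$, and by Proposition \ref{pr1} the map $i_a$ is harmonic iff $U^{\Ad_a}=\Ad_a(U^\G)$. Pairing this with an arbitrary $u\in\G$ and using the first relation of Proposition \ref{pr2} together with the defining identity $\langle U^\G,w\rangle_\G=\tr(\ad_w)$, I would record that $i_a$ is harmonic iff
\[\tr(\Ad_a^*\circ\ad_u\circ\Ad_a)=\langle U^\G,\Ad_a^* u\rangle_\G=\tr(\ad_{\Ad_a^* u})\qquad\text{for all }u\in\G .\]
By hypothesis this holds for every $a\in G$.

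The next step is to differentiate along one-parameter subgroups. Since $G$ is connected, it suffices to work near $e$, and for $a=\exp(tx)$ one has $\Ad_a=e^{t\ad_x}$ and $\Ad_a^*=e^{t\ad_x^*}$. Differentiating the displayed identity at $t=0$, the left-hand side yields $\tr(\ad_x^*\circ\ad_u)+\tr(\ad_u\circ\ad_x)=\tr\big((\ad_x+\ad_x^*)\circ\ad_u\big)$ by cyclicity of the trace, while the right-hand side, using $\tr(\ad_w)=\langle U^\G,w\rangle_\G$ and the definition of the adjoint, gives $\langle U^\G,\ad_x^* u\rangle_\G=\langle \ad_x U^\G,u\rangle_\G$. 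Writing $S_x:=\ad_x+\ad_x^*$ (a symmetric operator), I therefore expect the infinitesimal identity
\[\tr(S_x\circ\ad_u)=\langle \ad_x U^\G,u\rangle_\G\qquad\text{for all }x,u\in\G .\]

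Now I would specialize $u=x$. Because $S_x$ is symmetric, the skew part of $\ad_x$ contributes nothing to $\tr(S_x\circ\ad_x)$, so $\tr(S_x\circ\ad_x)=\tfrac12\tr(S_x^2)$, while the right-hand side becomes $\langle[x,U^\G],x\rangle_\G$. Summing over an orthonormal basis $(e_i)$ of $\G$ and using the general identity $\sum_i\langle[e_i,U^\G],e_i\rangle_\G=-\tr(\ad_{U^\G})=-\langle U^\G,U^\G\rangle_\G$, I obtain
\[\frac12\sum_{i}\tr(S_{e_i}^2)=-\langle U^\G,U^\G\rangle_\G .\]
Each $S_{e_i}$ being symmetric, $\tr(S_{e_i}^2)\ge 0$, so the left side is $\ge 0$ while the right side is $\le 0$; hence both vanish. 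This forces $U^\G=0$ and $\tr(S_{e_i}^2)=0$, i.e. $S_{e_i}=0$ for every $i$, and by linearity of $x\mapsto S_x$ we get $\ad_x+\ad_x^*=0$ for all $x\in\G$, which is exactly the condition that $g$ be bi-invariant.

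The main obstacle I anticipate is conceptual rather than computational: recognizing that only the first-order consequence of the hypothesis is needed, and finding the correct specialization $u=x$ followed by the summation over an orthonormal basis, which is what converts the a priori infinite family of harmonicity conditions into the single scalar inequality $0\le -\|U^\G\|_\G^2$. Care must also be taken with the two sign/adjoint manipulations (the passage $\langle U^\G,\ad_x^* u\rangle_\G=\langle\ad_x U^\G,u\rangle_\G$ and the trace identity $\sum_i\langle[e_i,U^\G],e_i\rangle_\G=-\tr(\ad_{U^\G})$), since the whole argument hinges on the left and right sides of the final display having opposite signs.
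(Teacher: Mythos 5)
Your proof is correct, and while it opens the same way as the paper's---differentiating the harmonicity condition along $a=\exp(tx)$ to obtain the infinitesimal identity $\tr((\ad_x+\ad_x^*)\circ\ad_u)=\tr(\ad_{\ad_x^*u})=\langle[x,U^\G],u\rangle_\G$---it closes by a genuinely different and more self-contained route. The paper specializes only to $u=x=U^\G$, which (using $\ad_{U^\G}^*U^\G=0$) yields $\ad_{U^\G}+\ad_{U^\G}^*=0$, hence $\langle U^\G,U^\G\rangle_\G=\tr(\ad_{U^\G})=0$ and unimodularity; it must then invoke the topological Lemma \ref{park1} (openness of $\mathrm{Inisom}(g)$ in $H(g)$) together with closedness of $\mathrm{Inisom}(g)$ and connectedness of $G=H(g)$ to conclude $\mathrm{Inisom}(g)=G$. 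Your specialization $u=x$ followed by summation over an orthonormal basis extracts both conclusions at once from the single scalar identity $\tfrac12\sum_i\tr\bigl((\ad_{e_i}+\ad_{e_i}^*)^2\bigr)=-\langle U^\G,U^\G\rangle_\G$: positivity forces each symmetric operator $\ad_{e_i}+\ad_{e_i}^*$ to vanish and $U^\G=0$, and linearity of $x\mapsto\ad_x+\ad_x^*$ gives skew-symmetry of every $\ad_x$, which is bi-invariance under the paper's standing convention that $G$ is connected. This buys you independence from Lemma \ref{park1} (which remains necessary elsewhere, for Theorem \ref{finite}), at no extra cost. All the individual manipulations check out: $\tr(S_x\circ\ad_x)=\tfrac12\tr(S_x^2)$ because the skew part of $\ad_x$ pairs to zero against the symmetric $S_x$, $\tr(\ad_{\ad_x^*u})=\langle U^\G,\ad_x^*u\rangle_\G=\langle[x,U^\G],u\rangle_\G$, and $\sum_i\langle[e_i,U^\G],e_i\rangle_\G=-\tr(\ad_{U^\G})=-\langle U^\G,U^\G\rangle_\G$. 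The only cosmetic quibble is that connectedness is not what lets you ``work near $e$'' (the hypothesis already holds for every $a$, in particular along every one-parameter subgroup); it is used only at the very end, to pass from infinitesimal skew-symmetry to right-invariance of the metric.
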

 \begin{proof}The hypothesis of the theorem is equivalent to:
 \[ \forall a\in G,\;\forall u\in\G,\; \langle U^{\mathrm{Ad}_a},u\rangle_\G-
  \langle U^{\G},\mathrm{Ad}_a^*u\rangle_\G=0.\]According to the first relation in Proposition \ref{pr2} this is equivalent to
 \[ \forall a\in G,\;\forall u\in\G,\; \tr(\mathrm{Ad}_a^*\circ\ad_u\circ
 \mathrm{Ad}_a)=\tr(\ad_{\mathrm{Ad}_a^*u}).\]
 By taking $a=\exp(tv)$ and differentiating this relation, we get
 \[ \forall u,v\in\G,\;\tr((\ad_v+\ad_v^*)\circ\ad_u)=\tr(\ad_{\mathrm{ad}_v^*u}). \]
 Remark that since $U^\G\in[\G,\G]^\perp$ then $\mathrm{ad}_{U^\G}^*U^{\G}=0$ and hence
 \[ \tr((\ad_{U^\G}+\ad_{U^\G}^*)\circ\ad_{U^\G})=0, \]which is equivalent to 
 $\ad_{U^\G}+\ad_{U^\G}^*=0$ and hence $\langle U^\G,U^\G\rangle_\G=\tr(\ad_{U^\G})=0$. Thus $\G$ is unimodular and we can apply Lemma \ref{park1} to conclude.
 \end{proof}
 
 \begin{theorem}\label{theonilpotent}If $G$ is abelian or 2-step nilpotent then $H(g)=\mathrm{Inisom}(g)=Z(G)$.
 
 \end{theorem}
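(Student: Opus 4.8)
The plan is to establish the single nontrivial inclusion $H(g)\subset Z(G)$, since the chain $Z(G)\subset\mathrm{Inisom}(g)\subset H(g)$ always holds and, as recorded after the definition of $\mathrm{Kill}(g)$, nilpotency already gives $\mathrm{Inisom}(g)=Z(G)$. The abelian case is immediate: there every $i_a=\mathrm{id}_G$ is harmonic and $Z(G)=G$, so $H(g)=\mathrm{Inisom}(g)=Z(G)=G$. So I would assume $G$ is $2$-step nilpotent. Then $G$ is unimodular, hence $U^\G=0$, and for $a\in G$ with $\xi=\Ad_a$ Proposition \ref{pr1} reduces the tension field to $\tau(\xi)=U^{\Ad_a}$. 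By the first relation of Proposition \ref{pr2} (applied with source and target both equal to $\G$), this vanishes iff $\tr(\Ad_a^*\circ\ad_u\circ\Ad_a)=0$ for every $u\in\G$, which is exactly the characterization of $H(g)$ already used in Lemma \ref{park1}.

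First I would exploit the $2$-step condition $[\G,[\G,\G]]=0$, which is precisely the statement that $\ad_u\circ\ad_v=0$ for all $u,v\in\G$. In particular $\ad_X^2=0$, so $\Ad_{\exp X}=e^{\ad_X}=\mathrm{id}_\G+\ad_X$; since $G$ is connected and nilpotent, $\exp$ is surjective and every $a$ can be written $a=\exp X$, giving $\Ad_a=\mathrm{id}_\G+\ad_X$ and $\Ad_a^*=\mathrm{id}_\G+\ad_X^*$. Expanding $\Ad_a^*\circ\ad_u\circ\Ad_a=(\mathrm{id}_\G+\ad_X^*)\circ\ad_u\circ(\mathrm{id}_\G+\ad_X)$, every term containing a product of two $\ad$'s vanishes: the leading term has $\tr(\ad_u)=0$ by unimodularity, the term $\ad_u\circ\ad_X$ is zero, and the term $\ad_X^*\circ\ad_u\circ\ad_X$ is zero because $\ad_u\circ\ad_X=0$. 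Thus the harmonicity condition collapses to $\tr(\ad_X^*\circ\ad_u)=0$ for all $u\in\G$.

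The key step is then to feed $u=X$ into this condition: it yields $\tr(\ad_X^*\circ\ad_X)=\sum_i\langle\ad_X(e_i),\ad_X(e_i)\rangle_\G=0$ for an orthonormal basis $(e_i)$, a sum of squares, which forces $\ad_X=0$. Hence $\Ad_a=\mathrm{id}_\G$, and since $G$ is connected this means $a\in Z(G)$. Therefore $H(g)\subset Z(G)$, and combined with the reverse inclusions we obtain $H(g)=\mathrm{Inisom}(g)=Z(G)$.

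I do not anticipate a genuine obstacle here; the delicate points are merely (i) justifying that $\exp$ is onto for a connected nilpotent group, so that the normal form $\Ad_a=\mathrm{id}_\G+\ad_X$ is available for \emph{every} $a$, and (ii) the bookkeeping that all ``double-$\ad$'' terms drop out, leaving exactly one positive-semidefinite trace whose vanishing kills $\ad_X$.
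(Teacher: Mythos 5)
Your argument is correct and is essentially identical to the paper's proof: both reduce to the condition $\tr(\Ad_a^*\circ\ad_v\circ\Ad_a)=0$, write $\Ad_{\exp X}=\mathrm{id}_\G+\ad_X$ using the $2$-step hypothesis, observe that the double-$\ad$ terms vanish, and then take $v=X$ to get the positive-semidefinite trace $\tr(\ad_X^*\circ\ad_X)=0$, forcing $\ad_X=0$. Your remark that only surjectivity of $\exp$ is needed (rather than the paper's claim that it is a diffeomorphism) is a small but accurate refinement.
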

 
 \begin{proof} The theorem is obvious when $G$ is abelian. Suppose now that $G$ is 2-step nilpotent. Then $\exp:\G\too G$ is a diffeomorphism. An element $\exp(u)\in H(g)$ iff,
 \[ \forall v\in\G,\quad\tr(\Ad_{\exp(u)}^*\circ\ad_v\circ\Ad_{\exp(u)})=0. \]
 Now, $\Ad_{\exp(u)}=\exp(\ad_u)=\mathrm{Id}_{\G}+\ad_u$ and for any $v,w\in\G$, $\ad_v\circ\ad_w=0$. So $\exp(u)\in H(g)$ iff
 \[ \forall v\in\G,\quad\tr(\ad_u^*\circ\ad_v)=0. \]
 By taking $v=u$, we get $\ad_u=0$ which achieves the proof.
 \end{proof}
 
 \begin{example} In \cite{park}, Park determined harmonic inner
automorphisms of $(\mathrm{SU}(2), g)$ for every left invariant Riemannian metric $g$. In this example, we consider
  $G=\mathrm{SL}(2,\R)$ and $\G=\mathrm{sl}(2,\R)$ and we give the equations determining harmonic inner automorphisms for a particular class of left invariant Riemannian metrics.
   Put
          \[ h=\left(\begin{array}{cc}1&0\\0&-1  \end{array} \right),\;
          e=\left(\begin{array}{cc}0&1\\0&0  \end{array} \right),\;
          f=\left(\begin{array}{cc}0&0\\1&0  \end{array} \right). \]
          We have
          \[ [e,f]=h,\; [h,e]=2e,\; \esp [h,f]=-2f. \]
          We consider the Euclidean product on $\G$ for which $(h,e,f)$ is orthogonal and $\langle h,h\rangle=\al_1$, $\langle e,e\rangle=\al_2$ and $\langle f,f\rangle=\al_3$ and we denote by $g$ the associated left invariant metric on $G$. Put $\al_{ij}=\al_i(\al_j)^{-1}$.  A direct computation shows that $A=
          \left(\begin{array}{cc}a&b\\c&d\end{array}\right)\in H(g)$ iff 
     \[ \left\{ \begin{array}{ccc}8(a^2b^2\al_{21}-c^2d^2\al_{31})+2(a^4-d^4+b^4\al_{23}-c^4\al_{32})&=&0,\\ 
     2(ad+bc)(2ab\al_{21}+cd)+ac(c^2\al_{12}+2a^2)+bd(d^2\al_{13}+2b^2\al_{23})&=&0,\\
     2(ad+bc)(ab+2cd\al_{31})+ac(a^2\al_{12}+2c^2\al_{32})+bd(b^2\al_{13}+2d^2)&=&0.\end{array}   \right. \]

 \end{example}

 We consider now the following problem. Given a Riemannian Lie group $(G,g)$ one can aim to determine all the couples $(\phi,h)$ where $\phi$ is an automorphism of $G$ and $h\in \mathcal{M}^\ell(G)$  such that $\phi:(G,g)\too(G,h)$ is harmonic. By remarking that $\phi:(G,g)\too(G,h)$ is harmonic iff $\mathrm{Id}_G:(G,g)\too(G,\phi^*h)$ is harmonic, the solution of the problem is equivalent to the determination of the group $\mathrm{Aut}(G)$ and the set $CH(g)$ of the left invariant Riemannian metric $h$ on $G$ such that
 $\mathrm{Id}_G:(G,g)\too(G,h)$ is harmonic. 
 
\begin{proposition}Let $(G,g)$ be a Riemannian Lie group. Then $h\in CH(g)$ 
iff, for any $u\in\G$, 
\begin{equation}\label{identity} 
\tr(J\circ\ad_{u})=\tr(\ad_{Ju}),
 \end{equation}where $J$ is given by
$ h(u,v)=g (Ju,v)$ for any $u,v\in\G$. In particular, $CH(g)$ is a convex cone which contains $g$.

\end{proposition}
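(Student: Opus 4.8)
The plan is to observe that $\mathrm{Id}_G:(G,g)\too(G,h)$ is a homomorphism of Lie groups whose differential at $e$ is $\xi=\mathrm{id}_\G$, so by Proposition \ref{pr1} its harmonicity is equivalent to the single equation $\tau(\xi)=U^\xi-\xi(U^\G)=0$. Since $\xi=\mathrm{id}_\G$, this reads simply $U^\xi=U^\G$, an equality of two vectors in $\G$. The whole argument then consists in translating this vector identity into the scalar condition \eqref{identity} by pairing against an arbitrary $u\in\G$ and invoking the first formula of Proposition \ref{pr2}.

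First I would identify the adjoint $\xi^*$. Viewing $\xi=\mathrm{id}_\G$ as a map $(\G,g)\too(\G,h)$, the defining relation $\langle\xi v,w\rangle_h=\langle v,\xi^*w\rangle_g$ together with $h(v,w)=g(Jv,w)=g(v,Jw)$ (here $J$ is $g$-symmetric because $h$ is symmetric) gives at once $\xi^*=J$. The first relation of Proposition \ref{pr2} then yields, for every $u\in\G$,
\[ \langle U^\xi,u\rangle_h=\tr(\xi^*\circ\ad_u\circ\xi)=\tr(J\circ\ad_u). \]
Next I would rewrite the other side of $U^\xi=U^\G$: pairing $U^\G$ with $u$ in the $h$-inner product and using the $g$-symmetry of $J$ and the defining property $\langle U^\G,v\rangle_g=\tr(\ad_v)$ gives
\[ \langle U^\G,u\rangle_h=g(JU^\G,u)=g(U^\G,Ju)=\langle U^\G,Ju\rangle_g=\tr(\ad_{Ju}). \]
Hence $U^\xi=U^\G$ holds iff $\langle U^\xi,u\rangle_h=\langle U^\G,u\rangle_h$ for all $u$, that is, iff $\tr(J\circ\ad_u)=\tr(\ad_{Ju})$ for all $u\in\G$, which is exactly \eqref{identity}.

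For the cone statement I would use that $h\mapsto J$ is a linear bijection from $\mathcal{M}^\ell(G)$ onto the set of $g$-symmetric positive definite endomorphisms of $\G$, compatible with positive scalings and convex combinations. Both sides of \eqref{identity} are linear in $J$: the map $J\mapsto\tr(J\circ\ad_u)$ is obviously linear, and $J\mapsto\tr(\ad_{Ju})$ is linear since $Ju$ depends linearly on $J$. Thus the solution set of \eqref{identity} is the intersection of the open convex cone of positive definite $g$-symmetric endomorphisms with the linear subspace cut out by \eqref{identity}, hence again a convex cone. Taking $h=g$ gives $J=\mathrm{id}_\G$, for which \eqref{identity} reduces to $\tr(\ad_u)=\tr(\ad_u)$; so $g\in CH(g)$.

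The only delicate point is the bookkeeping of the two distinct inner products $g$ and $h$. One must extract the scalar equation from $U^\xi=U^\G$ using the $h$-pairing rather than the $g$-pairing, because $\langle U^\xi,u\rangle_h$ is precisely what Proposition \ref{pr2} computes, while the term $\ad_{Ju}$ arises exactly from converting $\langle U^\G,u\rangle_h$ back to the $g$-pairing through $J$. Once the role of $J=\xi^*$ is pinned down, everything else is routine linear algebra.
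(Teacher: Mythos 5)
Your proof is correct and follows essentially the same route as the paper: both reduce to pairing $\tau(\mathrm{Id}_\G)=U^\xi-U^\G$ against an arbitrary $u$ in the $h$-inner product and converting through $J$. The only cosmetic difference is that you invoke the first formula of Proposition \ref{pr2} with $\xi^*=J$, whereas the paper redoes that Koszul-formula computation directly; you also spell out the convex-cone assertion, which the paper leaves implicit.
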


\begin{proof}Denote $\prs_1=g(e)$,  $\prs_2=h(e)$, $A$ the Levi-Civita product of $(\G,\prs_1)$ and $B$ the Levi-Civita product of $(\G,\prs_2)$.
 We have, for any $u\in\G$, and for any orthonormal basis $(e_i)_{i=1}^n$ of $\prs_1$
\begin{eqnarray*}
\langle\tau(\mathrm{Id}_\G),u\rangle_2&=&\sum_{i=1}^n\langle B_{e_i}e_i,u\rangle_2-\sum_{i=1}^n\langle A_{e_i}e_i,u\rangle_2\\
&=&\sum_{i=1}^n\langle [u,e_i],e_i\rangle_2-\sum_{i=1}^n\langle A_{e_i}e_i,Ju\rangle_1\\
&=&\tr(J\circ\ad_{u})-\tr(\ad_{Ju}).
\end{eqnarray*}\end{proof}

 \begin{definition}\label{harmonicdimension}
 We call $CH(g)$ the {\it harmonic cone} of $g$ and  $\dim CH(g)$ the {\it harmonic dimension} of $g$, where $\dim CH(g)$ is the dimension of the subspace spanned by $CH(g)$. \end{definition} This is an invariant of $g$ in the following sense.
  If $\phi:(G,g_1)\too(G,g_2)$ is an automorphism such that $\phi^*g_2=\al g_1$ with $\al$ is positive constant then $CH(g_2)=(\phi^{-1})^*CH(g_1)$ and hence $\dim CH(g_2)=\dim CH(g_1)$.
The following proposition is a consequence of Theorem \ref{park}.
\begin{proposition}\label{full} Let $(G,g)$ be a Riemannian Lie group. Then  $CH(g)=\mathcal{M}^\ell(G)$  iff $g$ is bi-invariant.

\end{proposition}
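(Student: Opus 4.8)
The plan is to prove the two implications separately, working throughout with the characterization \eqref{identity} from the previous proposition: writing $h(u,v)=g(Ju,v)$ with $J$ positive definite and $g(e)$-self-adjoint, one has $h\in CH(g)$ iff $\tr(J\circ\ad_{u})=\tr(\ad_{Ju})$ for every $u\in\G$.

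I would dispatch the implication ``$g$ bi-invariant $\Rightarrow CH(g)=\mathcal{M}^\ell(G)$'' by a direct verification of \eqref{identity}. Bi-invariance means $\ad_u+\ad_u^*=0$ for all $u\in\G$, so in particular $\tr(\ad_u)=0$ and $\G$ is unimodular, i.e.\ $U^\G=0$. Fix any $h\in\mathcal{M}^\ell(G)$ and its operator $J$. The right-hand side of \eqref{identity} is $\tr(\ad_{Ju})=\langle U^\G,Ju\rangle_\G=0$, while for the left-hand side the self-adjointness of $J$ and skew-symmetry of $\ad_u$ give $\tr(J\circ\ad_u)=\tr((J\circ\ad_u)^*)=\tr(\ad_u^*\circ J)=-\tr(\ad_u\circ J)=-\tr(J\circ\ad_u)$, hence $\tr(J\circ\ad_u)=0$. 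Thus \eqref{identity} holds for every positive $J$, so $CH(g)=\mathcal{M}^\ell(G)$.

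For the converse ``$CH(g)=\mathcal{M}^\ell(G)\Rightarrow g$ bi-invariant'' I would reduce to Theorem \ref{park}, which asserts that $H(g)=G$ forces $g$ to be bi-invariant. The key step is to show the full-cone hypothesis makes every inner automorphism harmonic. First I would note that for each $a\in G$ the pullback $i_a^*g$ is again left invariant: from the identity $i_a\circ L_b=L_{i_a(b)}\circ i_a$ together with $L_{i_a(b)}^*g=g$ one gets $L_b^*(i_a^*g)=i_a^*g$ for all $b$, so $i_a^*g\in\mathcal{M}^\ell(G)$. Since $i_a:(G,g)\too(G,g)$ is harmonic iff $\mathrm{Id}_G:(G,g)\too(G,i_a^*g)$ is harmonic, i.e.\ iff $i_a^*g\in CH(g)$, the hypothesis $CH(g)=\mathcal{M}^\ell(G)$ yields $a\in H(g)$ for every $a$. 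Hence $H(g)=G$, and Theorem \ref{park} gives the claim.

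The easy direction is the first one; the substance of the converse is entirely the reduction to Theorem \ref{park}, whose only delicate points are the two bookkeeping facts that $i_a^*g$ remains left invariant and that harmonicity of $i_a$ is equivalent to $i_a^*g$ belonging to the cone. As a self-contained alternative to invoking Theorem \ref{park}, I could extend \eqref{identity} from positive definite to all self-adjoint $J$ by linearity (a linear functional vanishing on the open positive cone vanishes identically) and then test it on the rank-one operators $Jx=\langle v,x\rangle_\G v$; this produces $\langle[u,v],v\rangle_\G=\langle u,v\rangle_\G\langle U^\G,v\rangle_\G$ for all $u,v\in\G$. Setting $u=v=U^\G$ kills the left-hand side and forces $U^\G=0$, after which the identity reads $\langle[u,v],v\rangle_\G=0$, i.e.\ $\ad_u+\ad_u^*=0$ for every $u$. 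I would nonetheless present the Theorem \ref{park} argument as primary, since it is shorter and matches the structure of the section.
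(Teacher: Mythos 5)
Your proof is correct and follows essentially the same route as the paper: the converse is exactly the paper's reduction to Theorem \ref{park} via the observation that $i_a^*g\in CH(g)$ for every $a\in G$, and your direct verification of \eqref{identity} in the forward direction just unwinds the paper's appeal to Proposition \ref{prbi}. The self-contained alternative you sketch (extending \eqref{identity} by linearity to all symmetric $J$ and testing on rank-one operators) is also valid and would make the proposition independent of Theorem \ref{park}, but as your primary argument it matches the paper.
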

\begin{proof} If $g$ is bi-invariant then, according to Proposition \ref{prbi}, for any left invariant metric $h$, $\mathrm{Id}_G:(G,g)\too(G,h)$ is harmonic. Suppose now that $CH(g)$ contains all the left invariant Riemannian metrics on $G$. Then, for any $a\in G$, $\Ad_a^*(g)\in CH(g)$ and hence $\Ad_a:(G,g)\too(G,g)$ is harmonic. By applying Theorem \ref{park} we get that $g$ is bi-invariant.
\end{proof}

\begin{example}\begin{enumerate}\item Let $E(1)$ be the 2-dimensional Lie group of rigid motions of the real line. Then, according to Proposition \ref{pr2d}, for any left invariant Riemannian metric $g$ on $E(1)$, $CH(g)=\left\{\al g,\al>0  \right\}$. Thus the harmonic dimension of any left invariant Riemannian metric on $E(1)$ is equal to 1.
 \item Let $H_3$ be the   3-dimensional Heisenberg  group and $g$ a left invariant Riemannian metric on $g$.  Denote by $\h_3$ its Lie algebra and $\prs_1=g(e)$. There exists an $\prs_1$-orthonormal basis $(z,f,g)$ such that $[f,g]=\al z$. A direct computation solving \eqref{identity} shows that $h\in CH(g)$ iff  $h(u,v)(e)=\langle Ju,v\rangle_1$ where the matrix of $J$ in the basis $(z,f,g)$ has the form
 $$
   \left(\begin{array}{ccc}a&0&0\\0&d&e\\0&e&h  \end{array} \right),\quad a>0,
   d+h>0\esp dh-e^2>0.$$Thus the harmonic dimension of any left invariant Riemannian metric on $H_3$ is equal to 4.
   \item Let $G=\mathrm{SO}(3,\R)$ and $\G=\mathrm{so}(3)$ its Lie algebra. Fix $g$ a left invariant Riemannian metric on $G$ and denote by $\prs=g(e)$. Denote also by $\prs_0$ the bi-invariant Euclidean product given
   \[ \langle A,B\rangle_0=-\tr(AB). \]
   Define $J_0$ by $\langle u,v\rangle=\langle J_0u,v\rangle_0$. There exists
    an $\prs_0$-orthonormal basis of $(X_1,X_2,X_3)$ of $\G$ such that
    $ J_0X_i=\al_i X_i$ with $i=1,2,3$ and $\al_i>0$. Since $\prs_0$ is bi-invariant it is easy to see that there exists a constant $c$ such that
    
    \[ [X_1,X_2]=cX_3,\;
    [X_2,X_3]=cX_1,\;[X_3,X_1]=cX_2\esp\langle X_i,X_j\rangle_1=\de_{ij}\al_i. \] Denote by $M$ the matrix of $\prs_1$ in this basis.
    By identifying an endomorphism with its matrix in $(X_1,X_2,X_3)$, we have
      \[ \ad_{X_1}=\left(\begin{array}{ccc}0&0&0\\0&0&-c\\0&
      c&0  \end{array} \right),\;
      \ad_{X_2}=\left(\begin{array}{ccc}0&0&c\\0&0&0\\
      -c&0&0  \end{array} \right),\;\]\[
       \ad_{X_3}=\left(\begin{array}{ccc}0&-c&0\\
       c&0&0\\0&0&0  \end{array} \right)\esp J=
       \left(\begin{array}{ccc}a&b&c\\b'&d&e\\c'&e'&f  \end{array} \right).\]
       Then \eqref{identity} is equivalent to $b=b'$, $c=c'$ and $e=e'$. The condition that $J$ is symmetric with respect to $\prs_1$ is equivalent to $MJ=JM$ which is equivalent to
       \[ (\al_1-\al_2)b=(\al_3-\al_1)c=(\al_3-\al_2)e=0. \]
       If the $\al_i$ are distinct then $\dim CH(g)=3$. If $\al_i=\al_j\not=\al_k$ then $\dim CH(g)=5$. If $\al_1=\al_2=\al_3$ then $g$ is bi-invariant and $\dim CH(g)=6$.

   \end{enumerate}
   
   \end{example}

 On all the examples above, we have the following formula
$$\dim CH(g)=\frac{n(n-1)}2+\dim \mathrm{Kill}(g),$$where $n$ is the dimension of the Lie group $G$. We will show now that this formula is valid in the general case when the group is unimodular.
\begin{proposition}\label{compute}
Let $(G,g)$ be an unimodular Riemannian  Lie group. Then
$$\dim CH(g)=\frac{n(n-1)}2+\dim \mathrm{Kill}(g),$$where $n$ is the dimension of the Lie group $G$.\end{proposition}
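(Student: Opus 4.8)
The plan is to reduce everything to a linear-algebra computation on the space $\mathrm{Sym}(\G)$ of $\prs_1$-symmetric endomorphisms of $\G$, where $\prs_1=g(e)$. By the preceding proposition, a left invariant metric $h$, written as $h(u,v)(e)=\langle Ju,v\rangle_1$ with $J$ symmetric positive definite, lies in $CH(g)$ exactly when $\tr(J\circ\ad_u)=\tr(\ad_{Ju})$ for all $u\in\G$. Since $G$ is unimodular, $\tr(\ad_v)=0$ for every $v$, so the right-hand side vanishes identically and the condition becomes the purely linear system $\tr(J\circ\ad_u)=0$ for all $u\in\G$. This is the step where unimodularity does its essential work.

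First I would simplify this system. For $J$ symmetric and any endomorphism $P$ one has $\tr(JP)=\tr(JP_s)$, where $P_s=\frac12(P+P^*)$ is the symmetric part, because $\tr(JP_a)=0$ whenever $P_a$ is skew (this follows from $\tr(M)=\tr(M^*)$ and cyclicity: $\tr(JP_a)=\tr(P_a^*J)=-\tr(JP_a)$). Hence the defining conditions read $\tr\bigl(J\circ(\ad_u+\ad_u^*)\bigr)=0$ for all $u$. Equipping $\mathrm{Sym}(\G)$ with the inner product $\langle P,Q\rangle=\tr(PQ)$, which is positive definite on symmetric operators, this says precisely that $J$ is orthogonal to the subspace $\mathcal{W}:=\spa\{\ad_u+\ad_u^*:u\in\G\}\subset\mathrm{Sym}(\G)$. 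Thus the solution set is the linear subspace $\Se:=\mathcal{W}^\perp$ of $\mathrm{Sym}(\G)$, of dimension $\frac{n(n+1)}2-\dim\mathcal{W}$.

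Next I would pass from the linear subspace $\Se$ to the cone $CH(g)$ itself. The identity operator lies in $\Se$ (again because $\tr(\ad_u)=0$), and $CH(g)$ consists exactly of the positive-definite elements of $\Se$. Since the positive-definite operators form an open subset of $\mathrm{Sym}(\G)$ containing $\mathrm{id}_\G$, the cone $CH(g)$ contains a neighborhood of $\mathrm{id}_\G$ inside $\Se$, and therefore its linear span is all of $\Se$; this gives $\dim CH(g)=\dim\Se$. This is the one place where the openness of the positive cone is used, and it is the only nonalgebraic point, though it is routine.

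Finally I would compute $\dim\mathcal{W}$ by rank-nullity applied to the linear map $\Phi:\G\too\mathrm{Sym}(\G)$, $u\mapsto\ad_u+\ad_u^*$. Its image is $\mathcal{W}$ and its kernel is exactly $\mathrm{Kill}(g)=\{u:\ad_u+\ad_u^*=0\}$, so $\dim\mathcal{W}=n-\dim\mathrm{Kill}(g)$. Substituting into $\dim CH(g)=\frac{n(n+1)}2-\dim\mathcal{W}$ yields $\dim CH(g)=\frac{n(n+1)}2-n+\dim\mathrm{Kill}(g)=\frac{n(n-1)}2+\dim\mathrm{Kill}(g)$, as claimed. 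The main obstacle, such as it is, is bookkeeping: checking that the trace pairing is nondegenerate on $\mathrm{Sym}(\G)$ and keeping straight that unimodularity is precisely what annihilates the term $\tr(\ad_{Ju})$; everything else is elementary.
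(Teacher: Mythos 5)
Your argument is correct and follows essentially the same route as the paper: both reduce membership in $CH(g)$ to orthogonality of the symmetric operator $J$ against the image of $u\mapsto\ad_u+\ad_u^*$ under the trace pairing (using unimodularity to kill $\tr(\ad_{Ju})$ and to place $\mathrm{id}_\G$ in the solution space), identify the kernel of that map with $\mathrm{Kill}(g)$, and use openness of the positive-definite cone in $\mathrm{Sym}(\G)$ to equate $\dim CH(g)$ with the dimension of the orthogonal complement. The only cosmetic difference is that the paper works with the inner product $\tr(A^*B)$ on all of $\mathrm{gl}(\G)$ and then intersects with $\mathrm{Sym}(\G)$, whereas you restrict to $\mathrm{Sym}(\G)$ from the outset.
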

\begin{proof}
 If  $(\G,\prs)$ is the  Lie algebra of $G$, the metric $\prs$ defines a definite positive product on $\mathrm{gl}(\G)$ by $\langle A,B\rangle_1=\tr(A^*B)$.
Define $\phi:\G/\mathrm{Kill}(g)\too\mathrm{gl}(\G)$, $[u]\mapsto \ad_u+\ad_u^*$. This is into and by using \eqref{identity}, we get
\[ CH(g)=\phi\left(\G/\mathrm{Kill}(g) \right)^\perp\cap\mathrm{Sym}^+(\G), \]
where $\phi\left(\G/\mathrm{Kill}(g) \right)^\perp$ is the orthogonal of $\phi\left(\G/\mathrm{Kill}(g) \right)$ with respect to $\prs_1$ and $\mathrm{Sym}^+(\G)$ is the convex cone of positive definite symmetric isomorphisms of $(\G,\prs)$. Since $\mathrm{Sym}^+(\G)$ is open in the space of symmetric endomorphisms $\mathrm{Sym}(\G)$, $CH(g)$ is open in $\phi\left(\G/\mathrm{Kill}(g) \right)^\perp\cap\mathrm{Sym}(\G)$. Thus
\[ \dim CH(g)=\dim\phi\left(\G/\mathrm{Kill}(g) \right)^\perp\cap\mathrm{Sym}(\G)=
\frac{n(n-1)}2+\dim \mathrm{Kill}(g).
 \]\end{proof}

\section{Biharmonic submersions between Riemannian Lie groups}\label{section4}

Let $\phi:(G,g)\too (H,h)$ be a submersion between two Riemannian Lie groups. Then $G_0=\ker\phi$ is a normal subgroup of $G$, $G/G_0$ is a Lie group and $\overline{\phi}:G/G_0\too H$ is an isomorphism. Let $\pi:\G\too\G/\G_0$ the natural projection.
If $\xi:\G\too \h$ is the differential of $\phi$ at $e$, the restriction of $\pi$ to $\ker\xi^\perp$ is an isomorphism onto $\G/\G_0$ and we denote by $\mathrm{r}:\G/\G_0\too\ker\xi^\perp$ its inverse. Thus $\mathrm{r}^*\prs_\G$ is  an Euclidean product on $\G/\G_0$ which defines a left invariant Riemannian metric $g_0$ on $G/G_0$. We denote by $\overline{\xi}$ the differential of $\overline{\phi}$ at $e$. 
\begin{proposition}\label{submersion} With the notations above, we have
\[ \tau(\xi)=\tau(\overline{\xi})-\xi(H^{\ker\xi}), \]where $\overline{\xi}:(\G/\G_0,\mathrm{r}^*\prs_\G)\too(\h,\prs_\h)$.

\end{proposition}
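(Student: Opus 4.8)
The plan is to read off $\tau(\xi)$ and $\tau(\overline{\xi})$ from the formula $\tau(\cdot)=U^{(\cdot)}-(\cdot)(U^{\mathrm{base}})$ of Proposition \ref{pr1} and to compare them term by term using one adapted orthonormal basis. First I would fix an orthonormal basis $(f_1,\dots,f_k)$ of $\G_0=\ker\xi$ and complete it to an orthonormal basis $(f_1,\dots,f_n)$ of $\G$, so that $(f_{k+1},\dots,f_n)$ is an orthonormal basis of $\ker\xi^\perp$. Setting $\overline{f}_i=\pi(f_i)$, the vectors $\overline{f}_{k+1},\dots,\overline{f}_n$ form an $\mathrm{r}^*\prs_\G$-orthonormal basis of $\G/\G_0$, since $\mathrm{r}$ is by construction a linear isometry onto $\ker\xi^\perp$ with $\mathrm{r}(\overline{f}_i)=f_i$. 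Two elementary identities set the stage: because $\G_0=\ker\xi$ is an ideal, $\pi$ is a Lie algebra homomorphism and $\xi=\overline{\xi}\circ\pi$; combined with $\pi\circ\mathrm{r}=\mathrm{id}$ this gives $\overline{\xi}=\xi\circ\mathrm{r}$ and $\mathrm{r}\circ\pi=p$, where $p:\G\too\ker\xi^\perp$ is the orthogonal projection and $\mathrm{id}-p$ projects onto $\G_0$.

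Next I would dispose of the $U$-terms. Since $\xi(f_i)=0$ for $i\le k$ and $\overline{\xi}(\overline{f}_i)=\xi(f_i)$, both $U^\xi=\sum_{i=1}^n B_{\xi(f_i)}\xi(f_i)$ and $U^{\overline{\xi}}=\sum_{i=k+1}^n B_{\overline{\xi}(\overline{f}_i)}\overline{\xi}(\overline{f}_i)$ collapse to the same sum $\sum_{i>k}B_{\xi(f_i)}\xi(f_i)$, whence $U^\xi=U^{\overline{\xi}}$. Therefore $\tau(\xi)-\tau(\overline{\xi})=\overline{\xi}(U^{\G/\G_0})-\xi(U^\G)$, and the whole statement reduces to proving the single identity $\xi(U^\G)-\overline{\xi}(U^{\G/\G_0})=\xi(H^{\ker\xi})$.

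The heart of the argument, and the step I expect to be the \emph{main obstacle}, is to relate the Levi-Civita product $\overline{A}$ of $(\G/\G_0,\mathrm{r}^*\prs_\G)$ to the Levi-Civita product $A$ of $(\G,\prs_\G)$. I claim that $\mathrm{r}(\overline{A}_{\overline{x}}\overline{y})=p(A_{\mathrm{r}\overline{x}}\,\mathrm{r}\overline{y})$ for all $\overline{x},\overline{y}\in\G/\G_0$. To prove it I would write the Koszul formula \eqref{lc} for $\overline{A}$, pair it against $\overline{w}$ with $w=\mathrm{r}\overline{w}\in\ker\xi^\perp$, and transport each term to $\ker\xi^\perp$ through the isometry $\mathrm{r}$: using $[\overline{x},\overline{y}]^{\G/\G_0}=\pi([\mathrm{r}\overline{x},\mathrm{r}\overline{y}]^\G)$ and $\mathrm{r}\circ\pi=p$, together with the facts that $p$ is self-adjoint and acts as the identity on $w\in\ker\xi^\perp$, each of the three bracket terms coincides with the corresponding term in the Koszul formula for $A$ on $\G$. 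As both sides of the claimed identity lie in $\ker\xi^\perp$, equality of their inner products with every $w\in\ker\xi^\perp$ forces $\mathrm{r}(\overline{A}_{\overline{x}}\overline{y})=p(A_{\mathrm{r}\overline{x}}\,\mathrm{r}\overline{y})$. It is exactly here that the ideal property of $\ker\xi$ is essential, since it is what makes $\pi$ a bracket homomorphism and hence turns the quotient bracket into the $\ker\xi^\perp$-component of the bracket on $\G$.

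Finally I would assemble the pieces. From $\overline{\xi}=\xi\circ\mathrm{r}$ and the claim, $\overline{\xi}(\overline{A}_{\overline{f}_i}\overline{f}_i)=\xi(p(A_{f_i}f_i))$, and since $A_{f_i}f_i-p(A_{f_i}f_i)\in\ker\xi$ this equals $\xi(A_{f_i}f_i)$ for each $i>k$; summing and using \eqref{ug} gives $\overline{\xi}(U^{\G/\G_0})=\sum_{i>k}\xi(A_{f_i}f_i)$. On the other hand I split $U^\G=\sum_{i=1}^n A_{f_i}f_i$: for $i\le k$ the decomposition of $A_{f_i}f_i$ into its $\G_0$- and $\ker\xi^\perp$-components sums, over $i\le k$, to $U^{\G_0}+H^{\ker\xi}$ with $U^{\G_0}\in\G_0=\ker\xi$, so applying $\xi$ kills $U^{\G_0}$ and leaves $\xi(U^\G)=\xi(H^{\ker\xi})+\sum_{i>k}\xi(A_{f_i}f_i)$. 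Subtracting the two displays yields $\xi(U^\G)-\overline{\xi}(U^{\G/\G_0})=\xi(H^{\ker\xi})$, which is the required identity; combined with the second paragraph it gives $\tau(\xi)=\tau(\overline{\xi})-\xi(H^{\ker\xi})$, completing the proof.
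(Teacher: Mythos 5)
Your proof is correct and follows essentially the same route as the paper: split an orthonormal basis of $\G$ along $\ker\xi\oplus\ker\xi^\perp$, observe $U^\xi=U^{\overline{\xi}}$, and establish via the Koszul formula that the Levi-Civita product of $(\G/\G_0,\mathrm{r}^*\prs_\G)$ is the $\ker\xi^\perp$-projection of that of $\G$ (your identity $\mathrm{r}(\overline{A}_{\overline{x}}\overline{y})=p(A_{\mathrm{r}\overline{x}}\mathrm{r}\overline{y})$ is just the paper's $\pi(A_uv)=\overline{A}_{\pi(u)}\pi(v)$ read through the isometry $\mathrm{r}$). The remaining bookkeeping with $H^{\ker\xi}$ matches the paper's as well.
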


\begin{proof} We have $\G=\ker\xi\oplus\ker\xi^\perp$. Choose an orthonormal basis $(f_i)_{i=1}^p$ of $\ker\xi$ and an orthonormal basis $(e_i)_{i=1}^q$ of $\ker\xi^\perp$. If $A$ and $B$ denote the Levi-Civita products of $\G$ and $\h$ respectively, we have
\[ \tau(\xi)=\sum_{i=1}^qB_{\xi(e_i)}\xi(e_i)-\sum_{i=1}^q\xi(A_{e_i}e_i)
-\sum_{i=1}^p\xi(A_{f_i}f_i). \]
If we put, for any $u,v\in\ker\xi$, $A_uv=A_u^0v+h(u,v)$ where $A^0$ is the Levi-Civta product of $\ker\xi$, we get
\[ \sum_{i=1}^p\xi(A_{f_i}f_i)=\xi(H^{\ker\xi}). \]
Denote by $\pi:\G\too \G/\G_0$ the natural project. Then $(\pi(e_i))_{i=1}^q$ is an orthonormal basis of $\G/\G_0$ and hence
\[ U^\xi=\sum_{i=1}^qB_{\xi(e_i)}\xi(e_i)=
\sum_{i=1}^qB_{\overline{\xi}(\pi(e_i))}\overline{\xi}(\pi(e_i))=U^{\overline{\xi}}. \]
To achieve the proof, we must show that
\[ \overline{\xi}(U^{\G/\G_0})=\sum_{i=1}^q\xi(A_{e_i}e_i). \]This is a consequence of more general formula. If $\overline{A}$ is the Levi-Civita product on $\G/\G_0$, then for any $u,v\in\ker\xi^\perp$,  
$\pi(A_uv)=\overline{A}_{\pi(u)}\pi(v)$. To  establish this relation note first that, for any $u,v\in\ker\xi^\perp$, we have $[u,v]^\G=\mathrm{r}([\pi(u),\pi(v)]^{\G/\G_0})+\om(u,v)$ where $\om(u,v)\in\ker\xi$. Now, for any $u,v,w\in\ker\xi^\perp$, we have
\begin{eqnarray*}
2\langle {\overline{A}}_{\pi(u)}\pi(v),\pi(w)\rangle_{\G/\G_0}&=&
\langle[\pi(u),\pi(v)]^{\G/\G_0},\pi(w)\rangle_{\G/\G_0}+
\langle[\pi(w),\pi(u)]^{\G/\G_0},\pi(v)\rangle_{\G/\G_0}\\&&+
\langle[\pi(w),\pi(v)]^{\G/\G_0},\pi(u)\rangle_{\G/\G_0}\\
&=& \langle\mathrm{r}([\pi(u),\pi(v)]^{\G/\G_0}),w\rangle_{\G}+
\langle\mathrm{r}([\pi(w),\pi(u)]^{\G/\G_0}),v\rangle_{\G}+
\langle\mathrm{r}([\pi(w),\pi(v)]^{\G/\G_0}),u\rangle_{\G}\\
&=&\langle[u,v]^{\G},w\rangle_{\G}+
\langle[w,u]^{\G},v\rangle_{\G}+
\langle[w,v]^{\G},u\rangle_{\G}\\
&=&2\langle A_uv,w\rangle_\G\\
&=&2\langle \pi(A_uv),\pi(w)\rangle_{\G/\G_0}.
\end{eqnarray*}

\end{proof}

The following proposition is an immediate consequence of Proposition \ref{submersion}.
\begin{proposition}\label{prsubmersion} Let $\phi:(G,g)\too (H,h)$ be a submersion between two Riemannian Lie groups. Then:
\begin{enumerate}\item [$(i)$] If $\ker\xi$ is minimal then $\phi$ is harmonic $($resp. biharmonic$)$ iff $\overline{\phi}$ is harmonic $($resp. biharmonic$)$.
\item[$(ii)$] If $\overline{\phi}$ is harmonic then $\phi$ is harmonic iff $\ker\xi$ is minimal.

\end{enumerate} \end{proposition}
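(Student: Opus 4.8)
The plan is to read everything off Proposition \ref{submersion}, which gives $\tau(\xi)=\tau(\overline\xi)-\xi(H^{\ker\xi})$, together with a term-by-term comparison of the bitension formula from Proposition \ref{pr1}. The starting observation is that ``$\ker\xi$ is \emph{minimal}'' means exactly $H^{\ker\xi}=0$, and that $\xi$ restricted to $\ker\xi^\perp$ is injective; since $H^{\ker\xi}\in\ker\xi^\perp$, this makes $\xi(H^{\ker\xi})=0$ equivalent to $H^{\ker\xi}=0$.

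For the harmonic statements this is immediate. Under the minimality hypothesis of $(i)$, Proposition \ref{submersion} yields $\tau(\xi)=\tau(\overline\xi)$, so $\tau(\xi)=0$ iff $\tau(\overline\xi)=0$, i.e.\ $\phi$ is harmonic iff $\overline\phi$ is. For $(ii)$, harmonicity of $\overline\phi$ means $\tau(\overline\xi)=0$, whence $\tau(\xi)=-\xi(H^{\ker\xi})$; by the injectivity just noted, $\tau(\xi)=0$ iff $H^{\ker\xi}=0$, i.e.\ iff $\ker\xi$ is minimal.

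The substantive part is the biharmonic half of $(i)$, where I would compare $\tau_2(\xi)$ and $\tau_2(\overline\xi)$ directly using the formula of Proposition \ref{pr1}. Split $\G=\ker\xi\oplus\ker\xi^\perp$ and pick an orthonormal basis $(f_i)_{i=1}^p$ of $\ker\xi$ and $(e_j)_{j=1}^q$ of $\ker\xi^\perp$, exactly as in the proof of Proposition \ref{submersion}. Since $\xi(f_i)=0$ and $B_0=0$, every summand of $\tau_2(\xi)$ indexed by an $f_i$ vanishes, so the sum over the basis of $\G$ collapses to a sum over the $e_j$ only. Because $\pi$ restricts to an isometry $\ker\xi^\perp\too(\G/\G_0,\mathrm{r}^*\prs_\G)$ with $\overline\xi(\pi(e_j))=\xi(e_j)$, the family $(\pi(e_j))_{j=1}^q$ is an orthonormal basis of $\G/\G_0$, and the surviving summands (the $B_{\xi(e_j)}B_{\xi(e_j)}\tau(\xi)$ terms and the curvature terms $K^H(\tau(\xi),\xi(e_j))\xi(e_j)$) coincide term by term with those of $\tau_2(\overline\xi)$ once $\tau(\xi)=\tau(\overline\xi)$ is used.

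The one point needing care, which I expect to be the real (if minor) obstacle, is the last term $B_{\xi(U^\G)}\tau(\xi)$ versus $B_{\overline\xi(U^{\G/\G_0})}\tau(\overline\xi)$. Writing $U^\G=\sum_j A_{e_j}e_j+\sum_i A_{f_i}f_i$ and decomposing $A_{f_i}f_i=A^0_{f_i}f_i+h(f_i,f_i)$ with $A^0_{f_i}f_i\in\ker\xi$, one gets $\xi(U^\G)=\sum_j\xi(A_{e_j}e_j)+\xi(H^{\ker\xi})$; invoking the identity $\overline\xi(U^{\G/\G_0})=\sum_j\xi(A_{e_j}e_j)$ established inside the proof of Proposition \ref{submersion}, this becomes $\xi(U^\G)=\overline\xi(U^{\G/\G_0})+\xi(H^{\ker\xi})$. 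Minimality kills the last summand, so $\xi(U^\G)=\overline\xi(U^{\G/\G_0})$ and the final terms match as well. Hence $\tau_2(\xi)=\tau_2(\overline\xi)$, so $\phi$ is biharmonic iff $\overline\phi$ is, completing $(i)$.
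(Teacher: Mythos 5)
Your proof is correct and takes exactly the route the paper intends: the paper offers no proof, declaring the proposition an immediate consequence of Proposition \ref{submersion}. Your term-by-term comparison of the bitension fields, including the decomposition $\xi(U^\G)=\overline{\xi}(U^{\G/\G_0})+\xi(H^{\ker\xi})$ needed to match the final terms, correctly supplies the details that the paper leaves implicit.
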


Let $\phi:(G,g)\too (H,h)$ be a submersion between two connected Riemannian Lie groups. The connectedness implies that $\phi$ is onto and $\overline{\phi}:G/G_0\too H$ is an isomorphism. So $\phi$ is harmonic (resp. biharmonic) iff $\Pi:(G,g)\too (G/G_0,\overline{\phi}^*h)$ is harmonic (resp. biharmonic). So the study of harmonic or biharmonic submersion between two connected Riemannian Lie groups is equivalent to the study of the projections $\Pi:(G,g)\too (G/G_0,h)$ where $(G,g)$ is a connected Lie group,  $G_0$ is a normal subgroup and $h$ is left invariant Riemannian metric on $G/G_0$. To build harmonic or biharmonic such projections, let first understand how $G$ can be constructed from $G/G_0$ and $G_0$.\\
Fix $\Pi:(G,g)\too (H,h)$ where $H=G/G_0$,   denote by $\pi:\G\too\G/\G_0$ the natural projection and
 $\mathrm{r}:\h\too(\ker\xi)^\perp$ the inverse of the restriction of $\pi$ to $(\ker\xi)^\perp$.  In this context the formula in Proposition \ref{submersion} has the following simpler form:
 \begin{equation}\label{eqsubmersion}
 \tau(\pi)=\tau(\mathrm{Id}_\h)-\pi(H^{\ker\xi}),
 \end{equation}where $\mathrm{Id}_\h:(\h,\prs_\pi)\too(\h,\prs_\h)$ where $\prs_\pi=\mathrm{r}^*\prs_\G$.

 For any $u\in\G$ we denote by $\wi\ad_u$ the restriction of $\ad_u$ to $\ker\xi$.
 We define $\rho:\h\too\mathrm{Der}(\ker\xi)$ and $\om\in\wedge^2\h^*\otimes\ker\xi$ by
 \begin{equation}\label{omega} \rho(h)=\wi\ad_{\mathrm{r}(h)} \esp \om(h_1,h_2)=[\mathrm{r}(h_1),\mathrm{r}(h_2)]^\G-\mathrm{r}([h_1,h_2]^\h),
 \end{equation}
 where $\mathrm{Der}(\ker\xi)$ is the space of derivations of $\ker\xi$.
 A direct computation using Jacobi identity of $[\;,\;]^\G$ and $[\;,\;]^\h$  shows that
 \begin{equation}\label{condition}
 \rho([h_1,h_2]^\h)=[\rho(h_1),\rho(h_2)]-\wi\ad_{\om(h_1,h_2)}\esp d_\rho\om=0,
 \end{equation}where
 \[ d_\rho\om(h_1,h_2,h_3)=\oint\left(\rho(h_1)(\om(h_2,h_3))
 -\om([h_1,h_2]^\h,h_3) \right). \]The symbol $\oint$ stands for circular permutations. Let give a characterization of $\tau(\xi)$ using the formalism above.
 \begin{proposition}\label{pr5} For any $h\in\h$, we have
 \begin{equation}\label{pr5eq} \langle \pi(H^{\ker\xi}),h\rangle_{\pi}=\tr(\rho(h)). \end{equation}
 \end{proposition}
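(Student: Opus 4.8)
The plan is to compute both sides by unwinding the definitions and reducing everything to the ambient scalar product $\langle\,,\,\rangle_\G$. The key observation is that the mean curvature vector $H^{\ker\xi}$ lies in $(\ker\xi)^\perp$ by construction, so it is itself the $\mathrm{r}$-image of its own projection; more precisely $\mathrm{r}(\pi(H^{\ker\xi}))=H^{\ker\xi}$. Since $\prs_\pi=\mathrm{r}^*\prs_\G$, the very definition of the metric $\prs_\pi$ gives, for every $h\in\h$,
\[
\langle\pi(H^{\ker\xi}),h\rangle_\pi=\langle \mathrm{r}(\pi(H^{\ker\xi})),\mathrm{r}(h)\rangle_\G=\langle H^{\ker\xi},\mathrm{r}(h)\rangle_\G.
\]
This turns the statement into an identity living purely in $(\G,\prs_\G)$.

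Next I would expand the mean curvature vector from its definition: if $(f_i)_{i=1}^p$ is an orthonormal basis of $\ker\xi$, then $H^{\ker\xi}$ is the sum of the $(\ker\xi)^\perp$-components of the products $A_{f_i}f_i$. Since $\mathrm{r}(h)\in(\ker\xi)^\perp$, pairing against $\mathrm{r}(h)$ discards the component tangent to $\ker\xi$, so the projection may be dropped and
\[
\langle H^{\ker\xi},\mathrm{r}(h)\rangle_\G=\sum_{i=1}^p\langle A_{f_i}f_i,\mathrm{r}(h)\rangle_\G.
\]

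Now I apply the defining formula \eqref{lc} with $u=v=f_i$ and $w=\mathrm{r}(h)$. The term $\langle[f_i,f_i]^\G,\mathrm{r}(h)\rangle_\G$ vanishes and the two remaining terms coincide, giving $\langle A_{f_i}f_i,\mathrm{r}(h)\rangle_\G=\langle[\mathrm{r}(h),f_i]^\G,f_i\rangle_\G$. Summing over $i$,
\[
\sum_{i=1}^p\langle A_{f_i}f_i,\mathrm{r}(h)\rangle_\G=\sum_{i=1}^p\langle[\mathrm{r}(h),f_i]^\G,f_i\rangle_\G=\tr\bigl(\wi\ad_{\mathrm{r}(h)}\bigr)=\tr(\rho(h)),
\]
where the last equalities use that $\ker\xi$ is an ideal of $\G$: this guarantees that $\ad_{\mathrm{r}(h)}$ maps $\ker\xi$ into itself, so its restriction $\wi\ad_{\mathrm{r}(h)}=\rho(h)$ is a genuine endomorphism of $\ker\xi$ whose trace is correctly computed in the basis $(f_i)$.

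The argument is essentially bookkeeping, so I expect no serious analytic or structural obstacle; the calculation is short once the identifications are set up. The only two points that genuinely require care are: first, the reduction in the first display, which rests entirely on $H^{\ker\xi}\in(\ker\xi)^\perp$ together with $\prs_\pi=\mathrm{r}^*\prs_\G$; and second, the final identification of the sum with $\tr(\rho(h))$, which is where the normality of $G_0$ (equivalently, $\ker\xi$ being an ideal of $\G$) is used to ensure $\rho(h)$ is an endomorphism of $\ker\xi$ in the first place.
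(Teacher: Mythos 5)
Your proof is correct and follows essentially the same route as the paper's: reduce the left-hand side to $\langle H^{\ker\xi},\mathrm{r}(h)\rangle_\G$ via $\prs_\pi=\mathrm{r}^*\prs_\G$, drop the tangential part of the mean curvature against $\mathrm{r}(h)\in(\ker\xi)^\perp$, apply the Koszul formula \eqref{lc} with $u=v=f_i$, and identify the resulting sum with $\tr(\rho(h))$. The only difference is that you spell out the justifications (in particular that $\ker\xi$ being an ideal makes $\wi\ad_{\mathrm{r}(h)}$ an endomorphism of $\ker\xi$) which the paper leaves implicit.
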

 \begin{proof}We have  $H^{\ker\xi}=\sum_{i=1}^p(A_{f_i}f_i)-A_{f_i}^0f_i),$ where $(f_i)_{i=1}^p$ is an orthonormal basis of $\ker\xi$, $A$ is the Levi-Civita product of $\G$ and $A^0$ is the Levi-Civita product of $\ker\xi$. So
 \begin{eqnarray*}
 \langle\pi(H^{\ker\xi}),h\rangle_\pi&=&\langle H^{\ker\xi},\mathrm{r}(h)\rangle_\G
 =\sum_{i=1}^p\langle A_{f_i}f_i,\mathrm{r}(h)\rangle_\G
 \stackrel{\eqref{lc}}=\sum_{i=1}^p\langle 
 [\mathrm{r}(h),{f_i}]^\G,f_i\rangle_\G
 =\tr(\rho(h)).  
 \end{eqnarray*}
 \end{proof}

Let study now the converse of our study above. Let $(\n,\h)$ be two  Lie algebras such that $\n$ carries an Euclidean product and $\h$ two Euclidean products $\prs_1$ and $\prs_2$,  $\rho:\h\too\mathrm{Der}(\n)$ and $\om\in\wedge^2\h^*\otimes\n$ satisfying \eqref{condition}. Define on $\G=\n\oplus\h$ the bracket $[\;,\;]^\G$
 \begin{equation}\label{bracket}
 [u,v]^\G=\left\{ \begin{array}{lcl}[u,v]^\n&\mbox{if}&u,v\in\n,\\
 \;[u,v]^\h+\om(u,v)&\mbox{if}&u,v\in\h,\\
 \rho(u)(v)&\mbox{if}&u\in\h,v\in\n.\end{array}
      \right.
 \end{equation}Then $(\G,[\;,\;]^\G,\prs_\G=\prs_\n\oplus\prs_1)$ is an Euclidean Lie algebra and the projection $\pi:\G\too\h$ is an homomorphism of Lie algebras. Let $G$ be the connected and simply connected  Lie group associated to $\G$ and $H$ any connected Riemannian Lie group associated to $\h$. Then there exists a unique homomorphism of Lie groups $\phi:G\too H$ such that $d_e\phi=\pi$. If we endow $G$ and $H$ by the left invariant Riemannian metrics associated respectively to $\prs_\G$ and $\prs_2$, $\phi$ becomes a  submersion. Moreover, $\tau(\phi)(e)$ is given by 
 \begin{equation}\label{eqsubmersion2}
 \tau(\phi)(e)=\tau(\mathrm{Id}_\h)-H^\rho,
 \end{equation}where $\mathrm{Id}_\h:(\h,\prs_1)\too(\h,\prs_2)$ and $H^\rho$ is given by $\langle H^\rho,u\rangle_1=\tr(\rho(u))$ for any $u\in\h$.
 So we have shown the following result.
 \begin{proposition}\label{pr6} There is a correspondence between the set of  submersions with a connected and simply-connected domain and the set of $(\n,\h,\rho,\om)$ where $\n$ is an Euclidean Lie algebra, $\h$ is a Lie algebra having two Euclidean products, $\rho:\h\too\mathrm{Der}(\n)$ and $\om\in\wedge^2\h^*\otimes\n$ satisfying \eqref{condition}.
 
 \end{proposition}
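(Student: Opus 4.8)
The plan is to verify that the two constructions carried out just above the statement are mutually inverse, so that together they set up the claimed correspondence. On one side one has the \emph{analysis} map, sending a submersion $\phi:(G,g)\too(H,h)$ with $G$ connected and simply connected to the quadruple $(\ker\xi,\h,\rho,\om)$, where $\ker\xi$ carries the restriction of $\prs_\G$, the space $\h$ carries the two products $\prs_1=\mathrm{r}^*\prs_\G$ and $\prs_2=\prs_\h$, and $\rho,\om$ are defined by \eqref{omega}; that this quadruple satisfies \eqref{condition} is exactly the Jacobi computation recorded before the statement. On the other side one has the \emph{synthesis} map of \eqref{bracket}, which from an abstract quadruple builds $(\G=\n\oplus\h,\prs_\G=\prs_\n\oplus\prs_1)$, integrates it to the unique connected and simply connected $G$, and produces $\phi$ from $\pi$. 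The first thing I would do is fix the equivalence relations making both collections into sets: isometric isomorphism of Riemannian submersions on the analytic side, and the evident isomorphisms of the data $(\n,\h,\rho,\om)$ on the algebraic side. The statement is then that analysis and synthesis descend to inverse bijections between these two sets of equivalence classes.

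For the easy direction, synthesis followed by analysis, I would start from a quadruple $(\n,\h,\rho,\om)$, form $\G=\n\oplus\h$ with the bracket \eqref{bracket}, and read off the data directly. Here $\ker\pi=\n$, its orthogonal complement for $\prs_\G=\prs_\n\oplus\prs_1$ is the summand $\h$, and the section $\mathrm{r}$ is simply the inclusion $\h\inj\G$. Consequently $\wi\ad_{\mathrm{r}(h)}$ is the restriction of $\ad_h$ to $\n$, which by the third line of \eqref{bracket} equals $\rho(h)$; and the defect $[\mathrm{r}(h_1),\mathrm{r}(h_2)]^\G-\mathrm{r}([h_1,h_2]^\h)$ is by the second line of \eqref{bracket} exactly $\om(h_1,h_2)$. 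Finally $\mathrm{r}^*\prs_\G=\prs_1$ and the target product is $\prs_2$, so the quadruple is recovered verbatim.

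For the other direction, analysis followed by synthesis, I would start from $\phi$, set $\xi=d_e\phi$, and note that $\ker\xi$ is an ideal, being the kernel of the Lie algebra homomorphism $\xi$; hence $\ad_{\mathrm{r}(h)}$ preserves $\ker\xi$ and $\rho(h)=\wi\ad_{\mathrm{r}(h)}$ is a genuine derivation of $\ker\xi$. The key point is that, under the orthogonal splitting $\G=\ker\xi\oplus(\ker\xi)^\perp$ and the identification $(\ker\xi)^\perp\cong\h$ via $\mathrm{r}$, the bracket of $\G$ is forced into the shape \eqref{bracket}: on $\ker\xi$ it is the restricted bracket; for $h\in\h$ and $v\in\ker\xi$ it is $\rho(h)(v)$ because $\ker\xi$ is an ideal; and for $h_1,h_2\in\h$ it is $\mathrm{r}([h_1,h_2]^\h)+\om(h_1,h_2)$ by the very definition \eqref{omega} of $\om$. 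Since moreover $\prs_\G$ splits as an orthogonal sum of its restriction to $\ker\xi$ and of $\mathrm{r}^*\prs_\G$, the Euclidean Lie algebra produced by synthesis is isomorphic to the original $(\G,\prs_\G)$. By Lie's theorems the connected and simply connected group integrating $\G$ is unique and the homomorphism integrating $\pi=\xi$ is unique, so the reconstructed submersion is isometrically isomorphic to $\phi$.

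The genuinely non-formal ingredients, and hence the steps I expect to require the most care, are two. First, the verification that \eqref{condition} is precisely the obstruction for \eqref{bracket} to define a Lie bracket, equivalently that it is automatic on the analytic side; this is the Jacobi-identity computation already stated before the proposition. Second, the appeal to the correspondence between simply connected Lie groups and their Lie algebras and to the integrability of the homomorphism $\pi$, which is exactly where the connectedness and simple-connectedness hypotheses enter and which guarantees that the synthesis map is well defined up to isomorphism. The remaining content, namely that the two maps are mutually inverse, is the bookkeeping carried out in the two paragraphs above once the bracket has been put into the normal form \eqref{bracket}.
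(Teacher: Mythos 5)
Your proposal is correct and follows essentially the same route as the paper: the paper's ``proof'' is precisely the two constructions preceding the statement (extraction of $(\ker\xi,\h,\rho,\om)$ with the Jacobi-identity verification of \eqref{condition}, and the synthesis via \eqref{bracket} integrated by Lie's theorems), and your write-up simply makes explicit that these are mutually inverse up to the natural isomorphisms. The extra care you take in fixing the equivalence relations on both sides is a reasonable sharpening of what the paper leaves implicit, not a different argument.
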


The following proposition is an interesting consequence of \eqref{pr5eq}.
 \begin{proposition} Let $G$ be a connected Riemannian Lie group and $\G_0$ a semisimple normal subgroup of $G$. Then $G_0\subset G$ is minimal and $\Pi:G\too G/G_0$ is harmonic when $G/G_0$ is endowed with the quotient metric $g_0$. Moreover, for any left invariant Riemannian metric $h$ on $G/G_0$, $\Pi:(G,g)\too (G/G_0,h)$ is harmonic $($resp. biharmonic$)$ iff $\mathrm{Id}_{G/G_0}:(G/G_0,g_0)\too (G/G_0,h)$ is harmonic $($resp. biharmonic$)$.
 
 \end{proposition}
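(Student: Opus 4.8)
The plan is to reduce everything to showing that the subalgebra $\ker\xi=\G_0$ is minimal in $\G$, and then to invoke the submersion machinery of Section \ref{section4}. Since $G_0$ is normal, its Lie algebra $\G_0$ is an ideal of $\G$; hence for every $u\in\G$ the inner derivation $\ad_u$ maps $\G_0$ into itself, and Jacobi's identity shows that its restriction $\wi\ad_u$ is a derivation of $\G_0$. Thus the quantity $\rho(h)=\wi\ad_{\mathrm{r}(h)}$ entering the formula \eqref{pr5eq} of Proposition \ref{pr5} is a derivation of $\G_0$ for each $h\in\h$, and the whole argument hinges on computing its trace.

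This is where semisimplicity enters. Since $\G_0$ is semisimple, every derivation of $\G_0$ is inner (Whitehead's lemma), so $\wi\ad_{\mathrm{r}(h)}=\ad^{\G_0}_w$ for some $w\in\G_0$; and since a semisimple Lie algebra is perfect, $\tr(\ad^{\G_0}_w)=0$. Therefore $\tr(\rho(h))=0$ for every $h\in\h$, and \eqref{pr5eq} gives $\langle\pi(H^{\ker\xi}),h\rangle_\pi=0$ for all $h$. As the pairing $\prs_\pi$ is non-degenerate this forces $\pi(H^{\ker\xi})=0$, and since $\pi$ restricts to a linear isomorphism of $(\ker\xi)^\perp=\G_0^\perp$ onto $\h$ while $H^{\ker\xi}$ lies in $\G_0^\perp$, we conclude $H^{\ker\xi}=0$; that is, $G_0\subset G$ is minimal.

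The remaining assertions then follow with no further computation. When $G/G_0$ carries the quotient metric $g_0$, the induced map $\mathrm{Id}_\h$ in \eqref{eqsubmersion} is the genuine identity of $(\h,\prs_\pi)$, whose tension field vanishes, so $\tau(\Pi)(e)=\tau(\mathrm{Id}_\h)-\pi(H^{\ker\xi})=0$ and $\Pi$ is harmonic. For an arbitrary left invariant metric $h$ on $G/G_0$, the minimality of $\ker\xi$ is exactly the hypothesis of Proposition \ref{prsubmersion}$(i)$, which yields at once that $\Pi:(G,g)\too(G/G_0,h)$ is harmonic (resp. biharmonic) if and only if $\overline{\Pi}=\mathrm{Id}_{G/G_0}:(G/G_0,g_0)\too(G/G_0,h)$ is harmonic (resp. biharmonic).

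The only substantive point is the trace computation in the second paragraph, and I anticipate no genuine obstacle there: semisimplicity delivers both the innerness and the tracelessness of $\wi\ad_{\mathrm{r}(h)}$ in a single stroke, just as in the immersion argument of Proposition \ref{pr4bis}. The one thing requiring care is to confirm that $\wi\ad_{\mathrm{r}(h)}$ really is a derivation of $\G_0$, which is precisely guaranteed by $\G_0$ being an ideal.
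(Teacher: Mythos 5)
Your proposal is correct and follows essentially the same route as the paper, which likewise deduces the result from \eqref{eqsubmersion}, \eqref{pr5eq}, and the fact that semisimplicity of $\G_0$ makes every derivation $\rho(u)$ inner and hence traceless. Your write-up merely spells out the intermediate steps (that $H^{\ker\xi}$ lies in $\G_0^\perp$ so $\pi(H^{\ker\xi})=0$ forces $H^{\ker\xi}=0$, and the appeal to Proposition \ref{prsubmersion}$(i)$) that the paper leaves implicit.
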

 
 \begin{proof} This is a consequence of \eqref{eqsubmersion}, \eqref{pr5eq} and the fact that $\G_0$ being semisimple, for any $u\in\G/\G_0$, the derivation $\rho(u)$ is inner and hence $\tr(\rho(u))=0$.
 \end{proof}

\section{Biharmonic Riemannian submersions between Riemannian Lie groups}\label{section5}

The following proposition follows easily from the last section's study.
\begin{proposition}\label{pr8} Let $\phi:G\too H$ be an homomorphism  between two Riemannian Lie groups which is a Riemannian submersion. Then $\phi$ is harmonic in each of the following cases:
\begin{enumerate}
\item[$(i)$] Both $\G$ and $\h$ are unimodular.
\item[$(ii)$]  $\ker\xi$ is unimodular and the Lie algebra $\h$ of $H$ satisfies $[\h,\h]=\h$.
\item[$(iii)$] $\ker\xi$ satisfies $[\ker\xi,\ker\xi]=\ker\xi$ and $\mathrm{Der}(\ker\xi)=\ad_{\ker\xi}$.
\end{enumerate}

\end{proposition}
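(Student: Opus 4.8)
The plan is to reduce harmonicity of the Riemannian submersion $\phi$ to minimality of the ideal $\ker\xi$, and then to establish minimality in each case by a single trace computation. Since $\phi$ is a Riemannian submersion, $\xi$ restricts to a linear isometry of $(\ker\xi)^\perp$ onto $\h$; as $\overline{\xi}$ is moreover a Lie algebra isomorphism, $\overline{\xi}:(\G/\G_0,\mathrm{r}^*\prs_\G)\too(\h,\prs_\h)$ is an isometric isomorphism, so $\overline{\phi}$ is an isometry and hence harmonic. By Proposition \ref{prsubmersion}$(ii)$, $\phi$ is therefore harmonic iff $\ker\xi$ is minimal, i.e. $H^{\ker\xi}=0$. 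Because $H^{\ker\xi}\in(\ker\xi)^\perp$ and $\pi$ is injective there, the computation \eqref{pr5eq} of Proposition \ref{pr5} shows this is equivalent to
\[ \tr(\rho(h))=0\quad\text{for every } h\in\h, \]
where $\rho(h)=\wi\ad_{\mathrm{r}(h)}$ is the restriction to $\ker\xi$ of $\ad_{\mathrm{r}(h)}$ and $\mathrm{r}(h)$ runs over $(\ker\xi)^\perp$. The whole proof then amounts to verifying this trace condition.

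For $(i)$, fix $h$ and set $w=\mathrm{r}(h)\in(\ker\xi)^\perp$. As $\ker\xi$ is an ideal, $\ad_w$ preserves it, so its trace on $\G$ splits as the trace of its restriction $\rho(h)$ to $\ker\xi$ plus the trace of the induced endomorphism on $\G/\ker\xi$; the latter equals $\ad_{\xi(w)}^\h$ since $\overline{\xi}$ identifies $\G/\ker\xi$ with $\h$. Hence
\[ \tr(\ad_w)=\tr(\rho(h))+\tr(\ad_{\xi(w)}^\h). \]
Unimodularity of $\G$ kills the left-hand side and unimodularity of $\h$ kills the last term, giving $\tr(\rho(h))=0$.

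For $(ii)$, I would take the trace of the first identity in \eqref{condition},
\[ \tr(\rho([h_1,h_2]^\h))=\tr([\rho(h_1),\rho(h_2)])-\tr(\wi\ad_{\om(h_1,h_2)}); \]
the commutator trace vanishes identically, and since $\om(h_1,h_2)\in\ker\xi$ with $\ker\xi$ unimodular the last term vanishes. Thus the linear functional $h\mapsto\tr(\rho(h))$ vanishes on $[\h,\h]$, hence on all of $\h$ because $[\h,\h]=\h$. For $(iii)$, each $\rho(h)$ is a derivation of $\ker\xi$, so by hypothesis $\rho(h)=\ad_x$ for some $x\in\ker\xi$; since $x'\mapsto\tr(\ad_{x'})$ vanishes on brackets, the condition $[\ker\xi,\ker\xi]=\ker\xi$ forces $\ker\xi$ to be unimodular, whence $\tr(\rho(h))=\tr(\ad_x)=0$. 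The main obstacle is the opening reduction—packaging the Riemannian submersion hypothesis into $\tau(\overline{\xi})=0$ and converting minimality of $\ker\xi$ into the scalar condition $\tr\circ\rho\equiv0$ via Propositions \ref{prsubmersion} and \ref{pr5}; once that is in place the three cases are short, only $(ii)$ needing the cocycle identity \eqref{condition}.
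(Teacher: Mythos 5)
Your proof is correct and follows essentially the same route as the paper: the reduction of harmonicity to the condition $\tr(\rho(h))=0$ via \eqref{pr5eq}, the trace of the cocycle identity \eqref{condition} for $(ii)$, and the inner-derivation argument for $(iii)$ all match the paper's proof. The only cosmetic divergence is in $(i)$, where the paper argues directly from $\tau(\xi)=U^\xi-\xi(U^\G)$ together with $U^\xi=U^\h$, while you obtain the same identity by splitting $\tr(\ad_w)$ along the ideal $\ker\xi$ --- the two computations are equivalent.
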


\begin{proof}
\begin{enumerate}\item[$(i)$]  This a consequence of the definition of $\tau(\xi)=U^\xi-\xi(U^\G)$ and the fact that $U^\xi=U^\h$ when $\phi$ is a Riemannian submersion.
\item[$(ii)$] According to \eqref{condition} and \eqref{pr5eq}, we have for any $h_1,h_2\in\h$,
\[ \langle\tau(\xi),[h_1,h_2]^\h\rangle_\h=-\tr(\rho([h_1,h_2])=
\tr(\wi\ad_{\om(h_1,h_2)})=0, \] and hence $\phi$ is harmonic.
\item[$(iii)$] From the hypothesis, $\ker\xi$  is unimodular and any derivation of $\ker\xi$ is interior and hence, for any $h\in\h$, $\tr(\rho(h))=0$ and \eqref{pr5eq} gives the result.
\end{enumerate}

\end{proof}

The following proposition gives an useful characterization of biharmonic Riemannian submersions between Riemannian Lie groups.

\begin{proposition}\label{pr7} Let $\phi:G\too H$ be an homomorphism  between two Riemannian Lie groups which is a Riemannian submersion. Then $\phi$ is biharmonic iff one of the following equivalent conditions holds:
\begin{enumerate}
\item[$(i)$] For an orthonormal basis $(e_i)_{i=1}^q$ of $\h$,
\begin{equation}\label{rs1} \sum_{i=1}^qB_{e_i}B_{e_i}\tau(\xi)+\ric^\h(\tau(\xi))-B_{\xi(U^\G)}\tau(\xi)=0, \end{equation}where $\ric^\h$ is the Ricci operator.
\item[$(ii)$] For any $u\in\h$,
\begin{equation}\label{rs2} \tr((\ad_u+\ad_u^*) \circ\ad_{\tau(\xi)} )
-\langle [u,\tau(\xi)],\tau(\xi)\rangle_\mathfrak{h}-\langle
[{\tau(\xi)},U^\h],u\rangle_\mathfrak{h}=0. \end{equation} 
\end{enumerate}

\end{proposition}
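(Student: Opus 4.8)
The plan is to specialize the general bitension formula from Proposition \ref{pr2} to the Riemannian submersion case and then rewrite the resulting expression in two equivalent ways. The crucial simplification available here, which I would establish first, is that for a Riemannian submersion $\phi$ the vector $U^\xi$ equals $U^\h$; this was already noted in the proof of Proposition \ref{pr8}$(i)$ and follows because the restriction of $\pi$ to $(\ker\xi)^\perp$ is an isometry onto its image, so an orthonormal basis of $(\ker\xi)^\perp$ maps under $\xi$ to an orthonormal basis of $\h$. Consequently, in the formula defining $\tau_2(\xi)$ in Proposition \ref{pr1}, I may replace the sum $\sum_{i=1}^n B_{\xi(e_i)}B_{\xi(e_i)}$ over an orthonormal basis of $\G$ by the sum $\sum_{i=1}^q B_{e_i}B_{e_i}$ over an orthonormal basis $(e_i)_{i=1}^q$ of $\h$, since the vectors $\xi(f_j)$ coming from $\ker\xi$ vanish.

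For part $(i)$, I would start from
\[
\tau_2(\xi)=-\sum_{i=1}^q\left(B_{e_i}B_{e_i}\tau(\xi)+K^H(\tau(\xi),e_i)e_i\right)+B_{\xi(U^\G)}\tau(\xi),
\]
and identify the curvature sum with the Ricci operator. Recall that for the Levi-Civita product $B$ of $(\h,\prs_\h)$ the Ricci operator is $\ric^\h(w)=\sum_{i=1}^q K^H(w,e_i)e_i$ (up to the sign convention fixed by $K^H(u,v)=[B_u,B_v]-B_{[u,v]}$ and $R^N$ as defined in the introduction). Substituting this identification and setting $\tau_2(\xi)=0$ yields \eqref{rs1} after changing the overall sign. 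The only point requiring care is matching the curvature sign convention between $R^N$, $K^H$, and the standard definition of $\ric^\h$, so that the sum of curvature terms genuinely reproduces $\ric^\h(\tau(\xi))$; I expect this bookkeeping to be the main (minor) obstacle.

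For part $(ii)$, I would instead take the second relation of Proposition \ref{pr2}, namely
\[
\langle\tau_2(\xi),u\rangle_\h=\tr(\xi^*\circ(\ad_u+\ad_u^*)\circ\ad_{\tau(\xi)}\circ\xi)-\langle[u,\tau(\xi)]^\h,\tau(\xi)\rangle_\h-\langle[\tau(\xi),U^\xi]^\h,u\rangle_\h,
\]
and simplify the first trace. Since $\xi$ restricted to $(\ker\xi)^\perp$ is an isometry onto $\h$ and kills $\ker\xi$, the composition $\xi\circ\xi^*$ acts as the identity on $\h$, so $\tr(\xi^*\circ M\circ\xi)=\tr(M)$ for any endomorphism $M$ of $\h$; applying this with $M=(\ad_u+\ad_u^*)\circ\ad_{\tau(\xi)}$ collapses the trace term to $\tr((\ad_u+\ad_u^*)\circ\ad_{\tau(\xi)})$. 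Replacing $U^\xi$ by $U^\h$ in the last term then gives exactly \eqref{rs2}, and setting $\langle\tau_2(\xi),u\rangle_\h=0$ for all $u\in\h$ is equivalent to biharmonicity since $\tau_2(\xi)\in\h$. Finally, the equivalence of $(i)$ and $(ii)$ is automatic, as both are literal rewritings of the single condition $\tau_2(\xi)=0$; no independent argument is needed beyond the two computations above.
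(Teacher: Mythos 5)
Your proposal is correct and follows exactly the route the paper takes: its proof of Proposition \ref{pr7} is the one-line remark that the result follows from the bitension formula of Proposition \ref{pr1}, the second identity of Proposition \ref{pr2}, and the fact that $\phi$ is a Riemannian submersion. You have simply supplied the details that the paper leaves implicit (that $\xi$ maps an adapted orthonormal basis onto one of $\h$, that $U^\xi=U^\h$, and that $\xi\circ\xi^*=\mathrm{id}_\h$ so the trace collapses), and these are all correct.
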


\begin{proof} It is a consequence of the fact that $\phi$ is a Riemannian submersion, \eqref{tension} and Proposition \ref{pr2}.
\end{proof}
We can now state this interesting result.
\begin{theorem}\label{theo1} Let $\phi:G\too H$ be an homomorphism  between two Riemannian Lie groups which is a Riemannian submersion. Then:
\begin{enumerate}\item[$(i)$] When $\h$ is unimodular then $\phi$ is biharmonic iff $\tau(\xi)^\ell$ is a Killing vector field. 
\item[$(ii)$] When $\ker\xi$ is unimodular or $\om=0$ then $\phi$ is biharmonic iff $\tau(\xi)^\ell$ is a parallel vector field $($$\omega$ is given by \eqref{omega}$)$. 

\end{enumerate}

\end{theorem}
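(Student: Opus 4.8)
The plan is to start from the biharmonicity criterion already established in Proposition \ref{pr7}, specializing first to the unimodular target and then to the complementary hypotheses of part $(ii)$. In both cases the strategy is to show that the scalar equation \eqref{rs2} (equivalently the operator equation \eqref{rs1}) collapses, under the stated hypothesis, to the vanishing of a familiar geometric quantity attached to $\tau(\xi)^\ell$. The key observation to keep in mind throughout is that $\tau(\xi)$ is a fixed vector in $\h$ and that the left invariant vector field $\tau(\xi)^\ell$ has covariant derivative governed by the Levi-Civita product $B$, namely $\na_{u^\ell}\tau(\xi)^\ell=(B_u\tau(\xi))^\ell$; hence Killing and parallel conditions for $\tau(\xi)^\ell$ translate into purely algebraic statements about $B_u\tau(\xi)$.

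For part $(i)$, I would use that $\h$ unimodular gives $U^\h=0$, which kills the last term in \eqref{rs2}. The remaining expression for any $u\in\h$ reads $\tr((\ad_u+\ad_u^*)\circ\ad_{\tau(\xi)})-\langle[u,\tau(\xi)],\tau(\xi)\rangle_\h$. The plan is to rewrite this as a statement about the symmetric part of the endomorphism $v\mapsto B_v\tau(\xi)$. Recall that $\tau(\xi)^\ell$ is Killing iff $u\mapsto B_u\tau(\xi)$ is skew-symmetric with respect to $\prs_\h$, i.e. $\langle B_u\tau(\xi),w\rangle_\h+\langle B_w\tau(\xi),u\rangle_\h=0$ for all $u,w$. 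First I would expand $\tr((\ad_u+\ad_u^*)\circ\ad_{\tau(\xi)})$ in an orthonormal basis and, using the defining formula \eqref{lc} for $B$, identify the sum with the trace of the composition of $\ad_{\tau(\xi)}$ with the symmetrized operator; combined with the bilinear term $\langle[u,\tau(\xi)],\tau(\xi)\rangle_\h=-\langle B_{\tau(\xi)}u,\tau(\xi)\rangle_\h$ (antisymmetry of $B$ in its lower index), this should reorganize into exactly the symmetric-part expression whose vanishing for all $u$ is the Killing condition. I expect this bookkeeping, tracking which indices are contracted, to be the delicate part.

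For part $(ii)$, the hypothesis is designed to simplify the Ricci term in \eqref{rs1}. When $\ker\xi$ is unimodular or $\om=0$, the submersion's O'Neill-type contributions drop out and one expects $\tau(\xi)$ to lie in a subspace where the relevant curvature acts trivially, forcing the full condition \eqref{rs1} to reduce to $\sum_i B_{e_i}B_{e_i}\tau(\xi)=0$ together with the vanishing of the first-order obstruction, which is precisely that $\tau(\xi)^\ell$ be parallel, i.e. $B_u\tau(\xi)=0$ for all $u\in\h$. Concretely I would argue that $U^\xi=U^\h$ for a Riemannian submersion, so $\tau(\xi)=U^\h-\xi(U^\G)$, and then invoke the description of $\tau(\xi)$ via $\rho$ and $H^\rho$ from \eqref{eqsubmersion2} and \eqref{pr5eq} to control the term $B_{\xi(U^\G)}\tau(\xi)$; under $\om=0$ or $\ker\xi$ unimodular the cross terms coming from $\om$ and from $\tr(\rho(\cdot))$ vanish, leaving an equation equivalent to parallelism.

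The main obstacle I anticipate is the index-chasing in part $(i)$: one must be careful to use both defining properties of the Levi-Civita product (the antisymmetry $\langle B_uv,w\rangle_\h=-\langle v,B_uw\rangle_\h$ and the torsion-free relation $B_uv-B_vu=[u,v]^\h$) in tandem to convert trace expressions into the symmetric bilinear form $\langle B_u\tau(\xi),w\rangle_\h+\langle B_w\tau(\xi),u\rangle_\h$, and to verify that no residual term survives. Once this identity is in hand, the equivalence with the Killing condition is immediate, and the parallel case in $(ii)$ follows by the same mechanism with the additional vanishing supplied by the curvature simplification. I would present the unimodular case in full and then indicate how dropping $U^\h$ and strengthening skew-symmetry to outright vanishing yields $(ii)$.
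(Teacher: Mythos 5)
Your starting point (Proposition \ref{pr7}) and your translations of the Killing and parallel conditions into algebraic statements about $B_u\tau(\xi)$ are correct, but both halves of your plan have a genuine gap at the decisive step. In part $(i)$, equation \eqref{rs2} with $U^\h=0$ is, for each $u$, a \emph{scalar} condition linear in $u$ (equivalently, the vanishing of a single vector in $\h$), whereas the Killing condition $\ad_{\tau(\xi)}+\ad_{\tau(\xi)}^*=0$ is the vanishing of a symmetric operator. No amount of index-chasing will ``reorganize'' the former into the latter: one is $n$ equations, the other $n(n+1)/2$. The paper's argument is different and essentially forced: substitute the single value $u=\tau(\xi)$ into \eqref{rs2}; the term $\langle[u,\tau(\xi)],\tau(\xi)\rangle_\h$ then vanishes identically and one is left with $\tr\bigl((\ad_{\tau(\xi)}+\ad_{\tau(\xi)}^*)\circ\ad_{\tau(\xi)}\bigr)=0$, which equals $\tfrac12\tr(S^2)$ for the symmetric operator $S=\ad_{\tau(\xi)}+\ad_{\tau(\xi)}^*$, hence $S=0$. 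This positivity step ($\tr(S^2)=0\Rightarrow S=0$) is what bridges the linear condition to the operator condition, and it is absent from your plan. (The converse direction is the easy one and does follow from symmetric-times-skew traces vanishing, as you suggest.)

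In part $(ii)$ your claim that the hypotheses put $\tau(\xi)$ ``in a subspace where the relevant curvature acts trivially'' is not what happens, and reducing \eqref{rs1} to $\sum_iB_{e_i}B_{e_i}\tau(\xi)=0$ would not by itself give parallelism (that is a rough-Laplacian condition, strictly weaker than $B_u\tau(\xi)=0$ for all $u$). The actual mechanism is a sum-of-nonpositive-terms argument: take the inner product of \eqref{rs1} with $\tau(\xi)$; skew-symmetry of $B$ kills $\langle B_{\xi(U^\G)}\tau(\xi),\tau(\xi)\rangle_\h$ and turns the first term into $-\sum_i\langle B_{e_i}\tau(\xi),B_{e_i}\tau(\xi)\rangle_\h\leq 0$; the hypotheses ($\ker\xi$ unimodular or $\om=0$) are used, via \eqref{condition} and \eqref{pr5eq}, only to show $\tau(\xi)\in[\h,\h]^\perp$, and then Milnor's Lemma 2.3 gives $\langle\ric^\h(\tau(\xi)),\tau(\xi)\rangle_\h=-\tr((\ad_{\tau(\xi)}+\ad_{\tau(\xi)}^*)^2)\leq 0$. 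Two nonpositive quantities summing to zero forces $B_{e_i}\tau(\xi)=0$ for all $i$, i.e.\ parallelism. Without this sign estimate for the Ricci term your sketch cannot close.
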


\begin{proof}\begin{enumerate}\item[$(i)$] Suppose that $\phi$ is biharmonic. By taking $u=\tau(\xi)$ in \eqref{rs2}, we get, since $U^\h=0$,
\[ \tr((\ad_{\tau(\xi)}+\ad_{\tau(\xi)}^*) \circ\ad_{\tau(\xi)} )=0. \]
This equivalent to $\ad_{\tau(\xi)}+\ad_{\tau(\xi)}^*=0$ and hence $\tau(\xi)^\ell$ is a Killing vector field. The converse follows easily from \eqref{rs2}.
\item[$(ii)$] Suppose that $\phi$ is biharmonic. We get from \eqref{rs1}
\[ -\sum_{i=1}^q\langle B_{e_i}\tau(\xi),B_{e_i}\tau(\xi)\rangle_\h+
\langle\ric^\h(\tau(\xi)),\tau(\xi)\rangle_\h=0. \]
By using the fact that $\ker\xi=0$ or $\om=0$, \eqref{condition} and \eqref{pr5eq}, one can see easily that $\tau(\xi)\in[\h,\h]^\perp$. It follows (see \cite{milnor} Lemma 2.3) that
\[ \langle\ric^\h(\tau(\xi)),\tau(\xi)\rangle_\h=-\tr((\ad_{\tau(\xi)}+
\ad_{\tau(\xi)}^*)^2)\leq0. \]
So $B\tau(\xi)=0$ which is equivalent to the fact that $\tau(\xi)^\ell$ is parallel. The converse follows from the fact that if 
$B\tau(\xi)=0$ then $\ad_{\tau(\xi)}+
\ad_{\tau(\xi)}^*=0$.
\end{enumerate}
\end{proof}

Let $H$ be a Riemannian Lie group, the tangent space $TH$ has natural Lie group structure for which the Sasaki metric is left invariant and the projection $\pi:TH\too H$ is a Riemannian submersion. We have the following result.
\begin{proposition} The following assertions are equivalent:
\begin{enumerate}\item[$(i)$] The projection $\pi:TH\too H$ is harmonic.
\item[$(ii)$] The projection $\pi:TH\too H$ is biharmonic.
\item[$(iii)$] $\h$ is unimodular.
\end{enumerate}

\end{proposition}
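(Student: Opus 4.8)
The plan is to set up the tangent bundle $TH$ as a Riemannian Lie group fitting into the framework of Section \ref{section4}, and then apply Theorem \ref{theo1}. First I would describe $TH$ concretely at the Lie algebra level. The Lie algebra of $TH$ is the semidirect product $\G=\h\ltimes\h$, where the first factor corresponds to the base and the second to the fibre, with bracket determined by $[(u_1,v_1),(u_2,v_2)]=([u_1,u_2]^\h,[u_1,v_2]^\h-[u_2,v_1]^\h)$; the fibre $\ker\xi=\{0\}\oplus\h$ is an abelian ideal and the action $\rho$ of the base on the fibre is the adjoint action $\rho(u)=\ad_u$. Since the Sasaki metric is left invariant and $\pi:TH\too H$ is a Riemannian submersion by hypothesis, this is exactly an instance of the correspondence in Proposition \ref{pr6}, with $\n=\h$ abelian and $\rho=\ad$.

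Next I would compute $\tau(\xi)$ using \eqref{eqsubmersion} and Proposition \ref{pr5}. Because the Sasaki metric makes $\pi$ a Riemannian submersion with totally geodesic-compatible structure, the term $\tau(\mathrm{Id}_\h)$ should vanish (the induced metric on the base matches $\prs_\h$), so that $\langle\tau(\xi),h\rangle=-\tr(\rho(h))=-\tr(\ad_h)$ for all $h\in\h$. This identity immediately gives the equivalence $(i)\Leftrightarrow(iii)$: the projection is harmonic iff $\tau(\xi)=0$ iff $\tr(\ad_h)=0$ for every $h\in\h$, which is precisely the statement that $\h$ is unimodular.

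For the equivalence with biharmonicity I would invoke Theorem \ref{theo1}$(ii)$. Since the fibre $\ker\xi$ is abelian, it is in particular unimodular, so the hypothesis of Theorem \ref{theo1}$(ii)$ is met and $\pi$ is biharmonic iff $\tau(\xi)^\ell$ is parallel. It remains to check that, for this particular setup, $\tau(\xi)^\ell$ being parallel forces $\tau(\xi)=0$. The clean way to see this is to note that $\tau(\xi)$ is characterized by $\langle\tau(\xi),h\rangle=-\tr(\ad_h)$, so $\tau(\xi)\in[\h,\h]^\perp$ and $\tau(\xi)=-U^\h$; a parallel left-invariant field $\tau(\xi)^\ell$ satisfies $B_{\tau(\xi)}=0$, whence $\ad_{\tau(\xi)}+\ad_{\tau(\xi)}^*=0$ and in particular $\langle U^\h,U^\h\rangle_\h=\tr(\ad_{U^\h})=-\langle\tau(\xi),\tau(\xi)\rangle$, forcing $\tau(\xi)=0$ and hence harmonicity. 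Running this together, $(ii)\Leftrightarrow(iii)$, closing the cycle.

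The main obstacle I anticipate is the explicit verification that the Sasaki metric on $TH$, written in these left-invariant coordinates, actually realizes $\pi$ as the abstract submersion of Proposition \ref{pr6} with $\tau(\mathrm{Id}_\h)=0$; this requires carefully identifying the horizontal and vertical distributions and checking that the Levi-Civita product of the Sasaki metric restricts correctly on the fibre and projects correctly onto the base. Once this identification is in place, everything else reduces to the already-established machinery of \eqref{eqsubmersion}, Proposition \ref{pr5}, and Theorem \ref{theo1}, so the genuinely new computation is confined to the Sasaki-metric bookkeeping.
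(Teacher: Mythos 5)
Your argument is essentially the paper's own proof: identify $T H$ as the semidirect product with abelian fibre, $\rho=\ad$ and $\om=0$, deduce $\tau(\xi)=-U^{\h}$ from \eqref{pr5eq} (giving $(i)\Leftrightarrow(iii)$), and then apply Theorem \ref{theo1}$(ii)$ so that biharmonicity forces $\ad_{U^{\h}}+\ad_{U^{\h}}^{*}=0$, hence $\langle U^{\h},U^{\h}\rangle_{\h}=\tr(\ad_{U^{\h}})=0$. The only blemish is the sign slip $\tr(\ad_{U^{\h}})=-\langle\tau(\xi),\tau(\xi)\rangle_{\h}$ (it should be $+\langle\tau(\xi),\tau(\xi)\rangle_{\h}$, with the vanishing coming from skew-symmetry of $\ad_{U^{\h}}$, not from the sign), which does not affect the conclusion.
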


\begin{proof} In this case $\ker\xi=\h$, $\rho$ is the adjoint representation of $\h$,  $\om=0$ and from \eqref{pr5eq} we deduce that $\tau(\xi)=-U^\h$ and the equivalence of $(i)$ and $(iii)$ follows. Since $\om=0$, according to Theorem \ref{theo1}, $\pi$ is biharmonic iff $\ad_{U^\h}+\ad_{U^\h}^*=0$ this implies that 
$\tr(\ad_{U^\h})=\langle U^\h,U^\h\rangle_\h=0$ and the equivalence of $(i)$ and $(ii)$ follows.
\end{proof}

\begin{theorem}\label{theo2} Let $\phi:G\too H$ be a Riemannian submersion between two Riemannian Lie groups. Suppose that the metric on $H$ is flat and $\ker\xi$ is unimodular or the metric on $H$ is flat and $\om=0$. Then $\phi$ is biharmonic.

\end{theorem}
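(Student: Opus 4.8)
The plan is to derive the biharmonicity directly from the characterization in Proposition~\ref{pr7}, specializing to a flat target. Recall from part~$(ii)$ of Theorem~\ref{theo1} that, under either hypothesis ($\ker\xi$ unimodular or $\om=0$), one shows that $\tau(\xi)\in[\h,\h]^\perp$ via \eqref{condition} and \eqref{pr5eq}. So the first step is to re-establish this inclusion $\tau(\xi)\in[\h,\h]^\perp$, which I expect to reuse verbatim from the proof of Theorem~\ref{theo1}$(ii)$.

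Next I would invoke the flatness of the metric on $H$. On a Riemannian Lie group, flatness is equivalent to the vanishing of the curvature $K^H(u,v)=[B_u,B_v]-B_{[u,v]}$ of the Levi--Civita product $B$ for all $u,v\in\h$; in particular the Ricci operator $\ric^\h$ vanishes identically. I would then feed this into the biharmonicity criterion \eqref{rs1} of Proposition~\ref{pr7}$(i)$, namely
\[
\sum_{i=1}^qB_{e_i}B_{e_i}\tau(\xi)+\ric^\h(\tau(\xi))-B_{\xi(U^\G)}\tau(\xi)=0.
\]
Since $\ric^\h=0$, it suffices to show that the remaining two terms vanish. The cleanest route is to show outright that $B\tau(\xi)=0$, i.e.\ that $\tau(\xi)^\ell$ is parallel, because then every term involving $B_{e_i}\tau(\xi)$ and $B_{\xi(U^\G)}\tau(\xi)$ dies immediately.

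To get $B\tau(\xi)=0$, I would mimic the argument in Theorem~\ref{theo1}$(ii)$ but now exploiting flatness in place of the Milnor inequality. Having $\tau(\xi)\in[\h,\h]^\perp$, Milnor's Lemma~2.3 (as cited) gives $\langle\ric^\h(\tau(\xi)),\tau(\xi)\rangle_\h=-\tr((\ad_{\tau(\xi)}+\ad_{\tau(\xi)}^*)^2)$. But on a flat Riemannian Lie group $\ric^\h=0$, so $\tr((\ad_{\tau(\xi)}+\ad_{\tau(\xi)}^*)^2)=0$; since $\ad_{\tau(\xi)}+\ad_{\tau(\xi)}^*$ is symmetric, this forces $\ad_{\tau(\xi)}+\ad_{\tau(\xi)}^*=0$, and by the standard formula for $B$ in terms of $\ad$ and $\ad^*$ one then reads off $B\tau(\xi)=0$. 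With $\tau(\xi)^\ell$ parallel, \eqref{rs1} holds trivially and $\phi$ is biharmonic.

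The main obstacle, I expect, is making precise the equivalence ``$B\tau(\xi)=0 \Leftrightarrow \ad_{\tau(\xi)}+\ad_{\tau(\xi)}^*=0$'' in the presence of the $[\h,\h]^\perp$ condition, and more importantly justifying that flatness really delivers $\ric^\h(\tau(\xi))=0$ in the right form to plug into Milnor's identity. An alternative, perhaps safer, route avoids \eqref{rs1} entirely and works with \eqref{rs2}: flatness plus $\tau(\xi)\in[\h,\h]^\perp$ should already force $\ad_{\tau(\xi)}+\ad_{\tau(\xi)}^*=0$ by the trace computation, after which each summand of \eqref{rs2} can be checked to vanish directly. I would present whichever of the two is shorter, but the parallel-vector-field conclusion from Theorem~\ref{theo1}$(ii)$ makes the \eqref{rs1} route the most economical, essentially reducing the theorem to the observation that flatness kills the Ricci term while the hypotheses already guarantee $\tau(\xi)^\ell$ is parallel.
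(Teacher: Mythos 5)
Your proposal is correct and has the same overall skeleton as the paper's proof: establish $\tau(\xi)\in[\h,\h]^\perp$ from \eqref{pr5eq} and \eqref{condition} under either hypothesis, deduce that $\tau(\xi)^\ell$ is parallel using flatness, and conclude via Theorem \ref{theo1}$(ii)$. The one place you genuinely diverge is the middle step: the paper simply cites \cite{boucetta} for the fact that on a flat Riemannian Lie group every $u\in[\h,\h]^\perp$ has $u^\ell$ parallel, whereas you rederive this in-house from $\ric^\h=0$ together with Milnor's identity $\langle\ric^\h(u),u\rangle_\h=-\tr\bigl((\ad_u+\ad_u^*)^2\bigr)$ for $u\in[\h,\h]^\perp$ (the same identity the paper already invokes in Theorem \ref{theo1}$(ii)$), which forces $\ad_{\tau(\xi)}+\ad_{\tau(\xi)}^*=0$ by symmetry of that operator. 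The final implication you flag as the "main obstacle" does go through: from \eqref{lc} one has $B_v\tau(\xi)=\frac12\bigl([v,\tau(\xi)]^\h-\ad_v^*\tau(\xi)-\ad_{\tau(\xi)}^*v\bigr)$, and skew-symmetry of $\ad_{\tau(\xi)}$ cancels the first and third terms while $\tau(\xi)\in[\h,\h]^\perp$ kills $\ad_v^*\tau(\xi)$, so $B_v\tau(\xi)=0$ for all $v$ and \eqref{rs1} holds trivially. Your version has the minor advantage of being self-contained (no appeal to the external flatness structure theorem), at the cost of writing out this small computation.
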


\begin{proof} One can deduce easily from \eqref{pr5eq} and \eqref{condition} that if $\ker\xi$ is unimodular or $\om=0$ then $\tau(\xi)\in[\h,\h]^\perp$. Now if the metric on $H$ is flat, it was shown in \cite{boucetta} that for any $u\in[\h,\h]^\perp$ $u^\ell$ is parallel and Theorem \ref{theo1} permits to conclude.
\end{proof}

We end this section by an important remark involving Riemannian submersion between Riemannian Lie groups. Let $\phi:G\too H$ and $\psi:H\too K$ two homomorphisms between Riemannian Lie groups. Suppose that $\phi$ is a Riemannian submersion and denote by $\xi$ and $\rho$ the differential at the neutral element of $\phi$ and $\psi$, respectively. We have
\begin{equation}\label{composition}
\tau(\rho\circ\xi)=\tau(\rho)+\rho(\tau(\xi)).
\end{equation}
This formula implies that if $\phi$ is harmonic then $\psi$ is biharmonic (resp. harmonic) iff $\psi\circ\phi$ is biharmonic (resp. harmonic).
 
 \section{When harmonicity and biharmonicity are equivalent}\label{section6}
 
 The following result is similar to Jiang's Theorem where compacity is replaced by unimodularity.
 \begin{theorem}\label{theo3} Let $\phi:G\too H$ be a homomorphism between two Riemannian Lie groups such that $R^H\leq0$ and $\G$ is unimodular. Then $\phi$ is harmonic iff it is biharmonic.
 
 \end{theorem}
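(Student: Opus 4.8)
The plan is to handle the two implications separately. The direction \emph{harmonic $\Rightarrow$ biharmonic} is immediate and requires no hypothesis on curvature: if $\tau(\xi)=0$, then every term in the expression of $\tau_2(\xi)$ given in Proposition \ref{pr1} is a (contraction of a) multilinear expression in $\tau(\xi)$, so $\tau_2(\xi)=0$ and $\phi$ is biharmonic. All the content lies in the converse, and the strategy there mimics Jiang's argument, with the homogeneity of the situation replacing the integration over a compact manifold used in the classical proof.

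For \emph{biharmonic $\Rightarrow$ harmonic}, I would first use unimodularity of $\G$, which gives $U^\G=0$, hence $\tau(\xi)=U^\xi$ and the bitension field collapses to $\tau_2(\xi)=-\sum_{i=1}^n\left(B_{\xi(e_i)}B_{\xi(e_i)}\tau(\xi)+K^H(\tau(\xi),\xi(e_i))\xi(e_i)\right)$. The key step is to pair this with $\tau(\xi)$ itself. Exploiting the skew-symmetry of each $B_u$ (property 2 of the Levi-Civita product) I would rewrite $-\langle B_{\xi(e_i)}B_{\xi(e_i)}\tau(\xi),\tau(\xi)\rangle_\h=|B_{\xi(e_i)}\tau(\xi)|^2$, turning the rough-Laplacian part into $\sum_{i=1}^n|B_{\xi(e_i)}\tau(\xi)|^2\geq0$. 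For the curvature part I would invoke that $K^H$ is the curvature of the Levi-Civita product, so it agrees with $R^H$ on left invariant fields; the hypothesis $R^H\leq0$ then gives $\langle K^H(\tau(\xi),\xi(e_i))\xi(e_i),\tau(\xi)\rangle_\h\leq0$ for each $i$, whence $-\sum_{i=1}^n\langle K^H(\tau(\xi),\xi(e_i))\xi(e_i),\tau(\xi)\rangle_\h\geq0$. Combining the two pieces yields $\langle\tau_2(\xi),\tau(\xi)\rangle_\h=\sum_{i=1}^n|B_{\xi(e_i)}\tau(\xi)|^2-\sum_{i=1}^n\langle K^H(\tau(\xi),\xi(e_i))\xi(e_i),\tau(\xi)\rangle_\h$, a sum of two nonnegative terms. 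Biharmonicity forces $\tau_2(\xi)=0$, so both terms vanish; in particular $B_{\xi(e_i)}\tau(\xi)=0$ for every $i$.

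The hard part will be passing from $B_{\xi(e_i)}\tau(\xi)=0$ to the conclusion $\tau(\xi)=0$, since the vanishing of these covariant derivatives does not by itself annihilate $\tau(\xi)$. Here I would return to the first formula of Proposition \ref{pr2} and compute $|\tau(\xi)|^2=\langle U^\xi,\tau(\xi)\rangle_\h=\tr(\xi^*\circ\ad_{\tau(\xi)}\circ\xi)=\sum_{i=1}^n\langle[\tau(\xi),\xi(e_i)]^\h,\xi(e_i)\rangle_\h$. Writing $[\tau(\xi),\xi(e_i)]^\h=B_{\tau(\xi)}\xi(e_i)-B_{\xi(e_i)}\tau(\xi)$ (property 1 of the Levi-Civita product) and using $B_{\xi(e_i)}\tau(\xi)=0$, each summand reduces to $\langle B_{\tau(\xi)}\xi(e_i),\xi(e_i)\rangle_\h$, which vanishes because $B_{\tau(\xi)}$ is skew-symmetric. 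Therefore $|\tau(\xi)|^2=0$, i.e. $\tau(\xi)=0$ and $\phi$ is harmonic. I expect the only delicate points to be the bookkeeping of the sign conventions in the curvature term and the clean identification of $K^H$ with $R^H$; both the skew-symmetry manipulations and the final trace computation are routine once the decomposition $\langle\tau_2(\xi),\tau(\xi)\rangle_\h\geq0$ is in place.
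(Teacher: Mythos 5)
Your proposal is correct and follows essentially the same route as the paper: pair $\tau_2(\xi)$ with $\tau(\xi)$, use unimodularity to drop the $B_{\xi(U^\G)}$ term, obtain a sum of nonnegative terms whose vanishing forces $B_{\xi(e_i)}\tau(\xi)=0$, and then kill $|\tau(\xi)|^2=\langle U^\xi,\tau(\xi)\rangle$ by skew-symmetry of the Levi-Civita product. Your final step merely reroutes through the trace formula of Proposition \ref{pr2} where the paper computes $\sum_i\langle B_{\xi(e_i)}\xi(e_i),\tau(\xi)\rangle_\h=-\sum_i\langle\xi(e_i),B_{\xi(e_i)}\tau(\xi)\rangle_\h$ directly; this is the same computation, and your signs in the curvature term are in fact the correct ones.
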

 
 \begin{proof} Suppose that $\phi$ is biharmonic. Then according to \eqref{tension} we get
 \[ \sum_{i=1}^n\left(\langle B_{\xi(e_i)}\tau(\xi),B_{\xi(e_i)}\tau(\xi)\rangle_\h
 +\langle K^H(\tau(\xi),\xi(e_i))\xi(e_i),\tau(\xi)\rangle_\h
 \right)=0. \]
 Since the curvature is negative we deduce that $B_{\xi(e_i)}\tau(\xi)=0$ for any $i=1,\ldots,n$. Now since $\G$ is unimodular $U^\G=0$ and hence $\tau(\xi)=U^\xi$ so
 \[ \langle \tau(\xi),\tau(\xi) \rangle_\h
 =\sum_{i=1}^n\langle B_{\xi(e_i)}\xi(e_i),\tau(\xi) \rangle_\h
 =-\sum_{i=1}^n\langle \xi(e_i),B_{\xi(e_i)}\tau(\xi) \rangle_\h=0 \]
 and hence $\phi$ is harmonic.
 \end{proof}
 
 \begin{corollary}\label{cotheo3} Let $\phi:G\too H$ be a homomorphism between two Riemannian Lie groups such that $R^H\leq0$ and $Ric^G\geq0$.  Then $\phi$ is harmonic iff it is biharmonic.

 \end{corollary}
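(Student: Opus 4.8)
The plan is to deduce this from Theorem~\ref{theo3}. The two hypotheses of the corollary are $R^H\le0$, which is precisely the first hypothesis of Theorem~\ref{theo3}, and $\ric^G\ge0$; the latter is meant to replace the unimodularity of $\G$ required there. Hence the whole statement reduces to the single implication
\[ \ric^G\ge0\;\Longrightarrow\;\G\ \mbox{is unimodular},\]
that is, $U^\G=0$. Once this is established, Theorem~\ref{theo3} applies verbatim (its proof used unimodularity only through $U^\G=0$, giving $\tau(\xi)=U^\xi$) and yields the equivalence of harmonicity and biharmonicity.

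To prove this implication I would argue by contradiction, testing the Ricci form on the distinguished direction $U^\G$. Recall that $U^\G$ is characterized by $\langle U^\G,v\rangle_\G=\tr(\ad_v)$, that $U^\G\in[\G,\G]^\perp$ (as already used in the proof of Theorem~\ref{park}), and that $\G$ is unimodular iff $U^\G=0$. Assuming $U^\G\neq0$, I would compute $\ric^G(U^\G,U^\G)$ directly from the Levi-Civita product, using
\[ A_uv=\tfrac12\left(\,[u,v]^\G-\ad_u^*v-\ad_v^*u\,\right)\]
together with the curvature $K^G(u,v)=[A_u,A_v]-A_{[u,v]^\G}$. This is exactly Milnor's computation \cite{milnor}: for a non-unimodular Euclidean Lie algebra the Ricci curvature is strictly negative in the direction $U^\G$, which spans the orthogonal complement of the unimodular kernel $\{v\in\G:\tr(\ad_v)=0\}$; that is, $\ric^G(U^\G,U^\G)<0$. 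This contradicts $\ric^G\ge0$ and forces $U^\G=0$, so $\G$ is unimodular.

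The main obstacle is precisely this sign computation: organizing $\sum_i\langle K^G(U^\G,e_i)e_i,U^\G\rangle_\G$ so that the contribution of the non-unimodular part $U^\G$ appears as a strictly negative quantity (a sum of squares with the correct sign). Everything else is immediate. In fact, if one prefers not to reproduce Milnor's curvature calculation, the implication $\ric^G\ge0\Rightarrow\G$ unimodular can simply be quoted from \cite{milnor}, which already appears in the references, after which invoking Theorem~\ref{theo3} completes the proof.
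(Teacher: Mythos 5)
Your proposal is correct and matches the paper's proof: the paper likewise deduces the corollary from Theorem~\ref{theo3} together with the fact that a Lie group admitting a left invariant metric with non-negative Ricci curvature must be unimodular, quoted directly from Milnor (\cite{milnor}, Lemma 6.4) rather than re-deriving the sign of $\ric^G(U^\G,U^\G)$.
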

 
 \begin{proof} This is a consequence of Theorem \ref{theo3} and the fact that a Lie group which admits a left invariant Riemannian metric with non-negative Ricci curvature must be unimodular (see \cite{milnor}  Lemma 6.4).
 \end{proof}
 
 \begin{remark}Actually, this corollary follows from a general theorem (see \cite{seddik} Theorem 3.1). The Lie groups which admit left invariant Riemannian metrics with $R\leq0$ have been classified by Azencott and Wilson \cite{wilson} and are all solvable.
 
 \end{remark}
 Since $|\tau(\phi)|=cst$ for any homomorphism of Riemannian Lie groups, we get the following results proved in a more general sitting by Oniciuc  \cite{oniciuc} in Propositions 2.2, 2.4, 2.5, 4.3.
 
 \begin{theorem}\label{theo4}Let $\phi:G\too H$ be a homomorphism between two Riemannian Lie groups.
 In each for the following cases,  $\phi$ is biharmonic iff it is harmonic:
 \begin{enumerate}\item $R^H\leq0$ and $\phi$ is a Riemannian immersion.
 \item $Ric^H\leq0$, $\phi$ is a Riemannian immersion and $\dim H=\dim G+1$. 
 
 \item $R^H<0$ and $\mathrm{rank}\xi>1$.
 \item $Ric^H<0$ and $\phi$ is a Riemannian submersion. 
 
 \end{enumerate}\end{theorem}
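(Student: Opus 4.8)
The plan is to prove in every case the nontrivial implication ``biharmonic $\Rightarrow$ harmonic'', the reverse being immediate since $\tau(\xi)=0$ forces $\tau_2(\xi)=0$. The single tool driving all four cases is obtained by pairing the bitension equation of Proposition \ref{pr1} with $\tau(\xi)$ itself. Using the skew-symmetry $\langle B_uv,w\rangle_\h+\langle v,B_uw\rangle_\h=0$ coming from \eqref{lc}, the term $\langle B_{\xi(U^\G)}\tau(\xi),\tau(\xi)\rangle_\h$ vanishes and each second-order term collapses, so that $\tau_2(\xi)=0$ yields the master identity
\[
\sum_{i=1}^n|B_{\xi(e_i)}\tau(\xi)|_\h^2=\sum_{i=1}^n\langle K^H(\tau(\xi),\xi(e_i))\xi(e_i),\tau(\xi)\rangle_\h .
\]
This is the algebraic counterpart, valid precisely because $|\tau(\phi)|$ is constant, of the Bochner-type identity underlying Oniciuc's propositions; the left-hand side is always $\ge0$, so the entire argument reduces to showing that the curvature sum on the right is $\le0$ and then extracting vanishing.

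For the two immersion cases (1) and (2) I would argue as follows. In case (1), $R^H\le0$ makes the right-hand side $\le0$, whence both sides vanish and $B_{\xi(e_i)}\tau(\xi)=0$ for all $i$. Writing $\tau(\xi)=H^{\xi(\G)}\in\xi(\G)^\perp$ and using the splitting $B_{\xi(e_i)}\xi(e_i)=\xi(A_{e_i}e_i)+h(\xi(e_i),\xi(e_i))$, one computes $|\tau(\xi)|_\h^2=\sum_i\langle\tau(\xi),B_{\xi(e_i)}\xi(e_i)\rangle_\h=-\sum_i\langle B_{\xi(e_i)}\tau(\xi),\xi(e_i)\rangle_\h=0$, exactly as in the proof of Proposition \ref{pr4}. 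In case (2) the codimension is one, so $\tau(\xi)=\al f$ for a unit normal $f$; completing $(\xi(e_i))$ by $f$ to an orthonormal basis of $\h$ identifies the curvature sum with $\al^2\,\mathrm{Ric}^H(f,f)\le0$, and one concludes as before.

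For the two strict cases (3) and (4), the right-hand side is again $\le0$, so both sides vanish identically. In case (4), for a Riemannian submersion $(\xi(e_i))$ running over an orthonormal basis of $(\ker\xi)^\perp$ is an orthonormal basis of $\h$ while the $\ker\xi$-directions drop out, so the curvature sum equals $\mathrm{Ric}^H(\tau(\xi),\tau(\xi))$; strict negativity forces $\tau(\xi)=0$. In case (3), each summand $\langle K^H(\tau(\xi),\xi(e_i))\xi(e_i),\tau(\xi)\rangle_\h$ vanishes, and strict negativity of the sectional curvature forces $\tau(\xi)$ and $\xi(e_i)$ to be collinear for every $i$; since $\mathrm{rank}\,\xi>1$ the vectors $\xi(e_i)$ span a subspace of dimension at least two, which is incompatible with all of them being collinear to the single vector $\tau(\xi)$ unless $\tau(\xi)=0$.

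The routine part is the derivation of the master identity; the delicate steps are the re-expressions of the curvature sum -- identifying it with $\al^2\mathrm{Ric}^H(f,f)$ via the codimension-one normal in case (2) and with $\mathrm{Ric}^H(\tau(\xi),\tau(\xi))$ via the submersion structure in case (4) -- together with the collinearity/rank argument of case (3), which is the one place where the hypothesis $\mathrm{rank}\,\xi>1$ is genuinely used. I expect the main obstacle to be nothing more than careful sign and orthonormal-basis bookkeeping, since the essential analytic input, the constancy of $|\tau(\phi)|$, is automatic in this setting.
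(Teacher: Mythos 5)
Your proposal is correct, and in all four cases the argument goes through: the master identity $\sum_i|B_{\xi(e_i)}\tau(\xi)|^2=\sum_i\langle K^H(\tau(\xi),\xi(e_i))\xi(e_i),\tau(\xi)\rangle$ follows from pairing $\tau_2(\xi)=0$ with $\tau(\xi)$ and the skew-symmetry of $B_u$, the identification of the right-hand side with $\mathrm{Ric}^H(\tau(\xi),\tau(\xi))$ in cases (2) and (4) is legitimate (in (2) the missing normal direction contributes $\langle K^H(\al f,f)f,\al f\rangle=0$; in (4) the $\ker\xi$ directions contribute nothing and $\xi$ maps an orthonormal basis of $\ker\xi^\perp$ to one of $\h$), and the collinearity argument in (3) correctly exploits $\mathrm{rank}\,\xi>1$. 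The comparison to make here is that the paper does not actually prove this theorem: it only observes that $|\tau(\phi)|$ is constant for a homomorphism and cites Oniciuc's Propositions 2.2, 2.4, 2.5 and 4.3, so your write-up supplies the argument the paper outsources. What you have produced is precisely the Lie-algebraic specialization of that Bochner-type argument, and it is the same mechanism the authors do carry out explicitly in their proof of Theorem \ref{theo3} (unimodular domain, $R^H\le0$); note in passing that the displayed identity in that proof carries the curvature term with the opposite sign to yours --- your sign is the consistent one given the paper's conventions $K^H(u,v)=[B_u,B_v]-B_{[u,v]}$ and sectional curvature $\langle K^H(X,Y)Y,X\rangle$, and it is the sign needed for the positivity/negativity argument to close. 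The one small point worth making explicit in cases (1) and (2) is that the final step $|\tau(\xi)|^2=\sum_i\langle\tau(\xi),B_{\xi(e_i)}\xi(e_i)\rangle$ uses $\tau(\xi)=H^{\xi(\G)}\perp\xi(\G)\ni\xi(U^\G)$, which you do record.
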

 The following results are specific to our context.
 \begin{theorem}\label{theo5}Let $\phi:G\too H$ be a homomorphism between two Riemannian Lie groups.
 In the following cases the harmonicity of $\phi$ and its biharmonicity are equivalent:
 \begin{enumerate}\item $H$ is 2-step nilpotent and $\G$ is unimodular.
 \item $\phi$ is a Riemannian submersion and $\G$ is unimodular.
 \item $\phi$ is a Riemannian submersion,  $\ker\xi^\perp$ is a subalgebra of $\G$ and $\G$ is unimodular.
 \item $\phi$ is a Riemannian submersion,  $\ker\xi$ is unimodular, $\dim H=2$ and $H$ is non abelian.

 \end{enumerate}\end{theorem}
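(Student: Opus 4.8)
The plan is to prove only the nontrivial implication in each case, namely \emph{biharmonic} $\Rightarrow$ \emph{harmonic}. The reverse is automatic: by Proposition \ref{pr1} the vector $\tau_2(\xi)$ is a linear expression in $\tau(\xi)$, so it vanishes whenever $\tau(\xi)=0$. The unifying device for the first three cases is that unimodularity of $\G$ gives $U^\G=0$, hence $\tau(\xi)=U^\xi$ by Proposition \ref{pr1}. Consequently the term $\langle[\tau(\xi),U^\xi]^\h,u\rangle_\h$ in the bitension formula of Proposition \ref{pr2} vanishes identically, and I am free to substitute $u=\tau(\xi)$ in that formula, which also kills $\langle[\tau(\xi),\tau(\xi)]^\h,\tau(\xi)\rangle_\h$.

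For case (1), $H$ being $2$-step nilpotent means $\ad_a\circ\ad_b=0$ for all $a,b\in\h$, in particular $\ad_{\tau(\xi)}^2=0$. Substituting $u=\tau(\xi)$ into Proposition \ref{pr2}, the biharmonicity equation collapses to $\tr(\xi^*\circ(\ad_{\tau(\xi)}+\ad_{\tau(\xi)}^*)\circ\ad_{\tau(\xi)}\circ\xi)=0$. Since $(\ad_{\tau(\xi)}+\ad_{\tau(\xi)}^*)\circ\ad_{\tau(\xi)}=\ad_{\tau(\xi)}^*\circ\ad_{\tau(\xi)}$, this reads $\tr\big((\ad_{\tau(\xi)}\circ\xi)^*\circ(\ad_{\tau(\xi)}\circ\xi)\big)=0$, forcing $\ad_{\tau(\xi)}\circ\xi=0$. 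Then the first relation of Proposition \ref{pr2} gives $\langle\tau(\xi),\tau(\xi)\rangle_\h=\langle U^\xi,\tau(\xi)\rangle_\h=\tr(\xi^*\circ\ad_{\tau(\xi)}\circ\xi)=0$, whence $\tau(\xi)=0$.

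For cases (2) and (3), $\phi$ is a Riemannian submersion with $\G$ unimodular, so $\xi\circ\xi^*=\mathrm{id}_\h$ and $\tau(\xi)=U^\xi=U^\h$. This is exactly the reduction giving Proposition \ref{pr7}(ii); putting $u=\tau(\xi)$ in \eqref{rs2} kills both bracket terms and leaves $\tr\big((\ad_{\tau(\xi)}+\ad_{\tau(\xi)}^*)\circ\ad_{\tau(\xi)}\big)=0$. A short trace computation shows $\tr\big((\ad_{\tau(\xi)}+\ad_{\tau(\xi)}^*)^2\big)=2\,\tr\big((\ad_{\tau(\xi)}+\ad_{\tau(\xi)}^*)\circ\ad_{\tau(\xi)}\big)=0$, so the symmetric operator $\ad_{\tau(\xi)}+\ad_{\tau(\xi)}^*$ vanishes; then $\langle\tau(\xi),\tau(\xi)\rangle_\h=\langle U^\h,\tau(\xi)\rangle_\h=\tr(\ad_{\tau(\xi)})=0$ and $\tau(\xi)=0$. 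In case (3) the hypothesis that $\ker\xi^\perp$ be a subalgebra forces $\om=0$ in \eqref{omega}, so this case is in any event covered by Theorem \ref{theo1}(ii) (biharmonic $\Leftrightarrow$ $\tau(\xi)^\ell$ parallel $\Leftrightarrow$ $\ad_{\tau(\xi)}+\ad_{\tau(\xi)}^*=0$), after which the same computation yields $\tau(\xi)=0$.

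Case (4) is the genuinely different one and I expect it to be the main obstacle, since $\G$ is not assumed unimodular and the substitution trick is unavailable. Instead I would invoke Theorem \ref{theo1}(ii): as $\ker\xi$ is unimodular, biharmonicity forces both $\tau(\xi)\in[\h,\h]^\perp$ and $\ad_{\tau(\xi)}+\ad_{\tau(\xi)}^*=0$. Now I use the structure of $\h$: a two-dimensional non-abelian Lie algebra has $\dim[\h,\h]=1$, so $[\h,\h]^\perp$ is one-dimensional; moreover $\h$ is non-unimodular, so $U^\h\neq0$, and since $U^\h\in[\h,\h]^\perp$ always holds, $[\h,\h]^\perp=\R U^\h$. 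Thus $\tau(\xi)=c\,U^\h$ for some scalar $c$, and the Killing condition gives $c\,\langle U^\h,U^\h\rangle_\h=\langle U^\h,\tau(\xi)\rangle_\h=\tr(\ad_{\tau(\xi)})=0$; since $U^\h\neq0$ this forces $c=0$, i.e.\ $\tau(\xi)=0$. The only delicate points are the trace identity used in cases (2)--(3) and, in case (4), the combination of $\dim[\h,\h]^\perp=1$ with $U^\h\neq0$; everything else is bookkeeping with Propositions \ref{pr1}, \ref{pr2}, \ref{pr7} and Theorem \ref{theo1}.
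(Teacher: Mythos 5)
Your proposal is correct, and for case (1) it is essentially the paper's own argument: unimodularity gives $\tau(\xi)=U^\xi$, substituting $u=\tau(\xi)$ into the bitension formula of Proposition \ref{pr2} together with $\ad_a\circ\ad_b=0$ yields $\tr\bigl((\ad_{\tau(\xi)}\circ\xi)^*\circ(\ad_{\tau(\xi)}\circ\xi)\bigr)=0$, hence $\ad_{\tau(\xi)}\circ\xi=0$ and then $|\tau(\xi)|^2=0$. Where you diverge is in cases (2)--(4), and the divergence is worth recording. For case (2) the paper routes through Theorem \ref{theo1}, asserting along the way that $\ker\xi$ is unimodular because $\G$ is; that implication is not automatic (an ideal of a unimodular Lie algebra need not be unimodular), so the paper's reduction really rests on the weaker but sufficient fact that $\tau(\xi)=U^\h\in[\h,\h]^\perp$. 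Your direct substitution of $u=\tau(\xi)$ into \eqref{rs2}, followed by the trace identity $\tr(S^2)=2\,\tr(S\circ\ad_{\tau(\xi)})$ for $S=\ad_{\tau(\xi)}+\ad_{\tau(\xi)}^*$ and the observation $|\tau(\xi)|^2=\tr(\ad_{\tau(\xi)})=0$, sidesteps this entirely and only uses Proposition \ref{pr7}, which holds for any Riemannian submersion; this is cleaner and more robust. Case (3), as you note, is literally subsumed by case (2) (the paper says ``same argument''), and the remark that $\ker\xi^\perp$ being a subalgebra forces $\om=0$ is a correct alternative entry point into Theorem \ref{theo1}$(ii)$. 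For case (4) the paper also invokes Theorem \ref{theo1}$(ii)$ but finishes by computing the Levi-Civita product of the two-dimensional non-abelian algebra explicitly and checking that no nonzero left-invariant vector field is parallel; your finish instead combines the two structural consequences of biharmonicity, namely $\tau(\xi)\in[\h,\h]^\perp=\R U^\h$ (using $\dim[\h,\h]=1$ and $U^\h\neq0$) and $\tr(\ad_{\tau(\xi)})=0$ from the Killing condition, to force $\tau(\xi)=0$. Both finishes are valid; yours avoids the coordinate computation and makes visible exactly which two facts are doing the work.
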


 \begin{proof}\begin{enumerate}\item Since $\G$ is unimodular then $\tau(\xi)=U^\xi$. Now since $\h$ is 2-step nilpotent then $[\h,\h]\subset Z(\h)$ and $\ad_u\circ\ad_v=0$ for any $u,v\in\h$. From Proposition \ref{pr2} we deduce that $U^\xi\in Z(\h)^\perp\subset[\h,\h]^\perp$ and hence
 \[ \tr(\xi^*\circ\ad_{U^\xi}^*\circ\ad_{U^\xi}\circ\xi)=0. \]
 This equivalent to $\ad_{U^\xi}\circ\xi=0$. So
 \[ \langle U^\xi,U^\xi\rangle_\h=\sum_{i=1}^n
 \langle B_{\xi(e_i)}\xi(e_i),U^\xi\rangle_\h=
 \sum_{i=1}^n
  \langle \xi(e_i),[U^\xi,\xi(e_i)]^\h\rangle_\h=0,
  \]and hence $\phi$ is harmonic.
  \item Suppose that $\phi$ is biharmonic. Since $\G$ is unimodular $\tau(\xi)=U^\xi$, $\ker\xi$ is unimodular and according to Theorem \ref{theo1}, $U^\xi$ is parallel an hence the Killing. But we have seen in Proposition \ref{pr2} that $U^\xi$ is orthogonal to the space $\mathrm{Kill}(h)$ and hence $U^\xi=0$.
  \item The same argument as above.
  
  \item According to Theorem \ref{theo1}, to prove this assertions it suffices to prove that for any left invariant metric on the 2-dimensional non abelian Lie group there is no non trivial parallel left invariant vector field. Suppose that $\dim\h=2$ non abelian. Then there exists an orthonormal basis $(e,f)$ such that $[e,f]=ae$. A direct computation gives 
  \[ B_ee=-af,\; B_ef=ae,\; B_fe=0,\; B_ff=0. \]
  $\al=\al_1e^*+\al_2f^*$ is parallel iff
  $ -a\al_1f+a\al_2e=0.$ So $\al=0$.
 \end{enumerate}
 
 \end{proof}
 
 \section{Some general methods for building examples}\label{section7}
 
 In this section, following our study in Sections \ref{section3} and \ref{section4} we give some general methods to builds large classes of harmonic and biharmonic homomorphisms. The following methods are based on Propositions \ref{submersion} and \ref{pr6}, \eqref{pr5eq} and Theorem \ref{theo1}.
 \subsection{How to build harmonic submersions between Riemannian Lie groups}
 \begin{enumerate}\item Choose two Lie algebras $\h$ and $\n$ with two Euclidean products  $\prs_1$ and $\prs_2$ on $\h$ and an Euclidean product $\prs_\n$ on $\n$.
 \item Compute $\tau(\mathrm{Id}_\h)$ where $\mathrm{Id}_\h:(\h,\prs_1)\too(\h,\prs_2)$.
 \item Construct $\rho:\h\too\mathrm{Der}(\n)$ and $\om\in\wedge^2\h^*\otimes\n$ satisfying \eqref{condition} and for any $h\in\h$, $\tr(\rho(h))=\langle h,\tau(\mathrm{Id}_\h)\rangle_1$. 
 \item The projection $(\n\oplus\h,[\;,\;],\prs_\n\oplus\prs_1)\too(\h,[\;,\;]^\h,\prs_2)$ is harmonic. The bracket $[\;,\;]$ is given by \eqref{bracket}.

 \end{enumerate}
 
 \subsection{How to build biharmonic submersions between Riemannian Lie groups}
  \begin{enumerate}\item Choose two Lie algebras $\h$ and $\n$ with two Euclidean products  $\prs_1$ and $\prs_2$ on $\h$ and an Euclidean product $\prs_\n$ on $\n$ such that $\mathrm{Id}_\h:(\h,\prs_1)\too(\h,\prs_2)$ is biharmonic.
  \item Construct $\rho:\h\too\mathrm{Der}(\n)$ and $\om\in\wedge^2\h^*\otimes\n$ satisfying \eqref{condition} and for any $u\in\h$, $\tr(\rho(h))=0$. 
  \item The projection $(\n\oplus\h,[\;,\;],\prs_\n\oplus\prs_1)\too(\h,[\;,\;]^\h,\prs_2)$ is biharmonic. The bracket $[\;,\;]$ is given by \eqref{bracket}.
  
  \end{enumerate}

 \subsection{How to build biharmonic Riemannian submersions between Riemannian Lie groups: first method}
   \begin{enumerate}\item Choose two Lie algebras $\h$ and $\n$ with two Euclidean products  $\prs_1$ on $\h$ and  $\prs_\n$ on $\n$.
   
   \item Construct $\rho:\h\too\mathrm{Der}(\n)$ a representation such that  $\tr\circ\rho$ is parallel, i.e., for any $u,v\in\h$, $\tr(\rho(A_uv))=0$, where $A$ is the Levi-Civita product on $\h$. 
   \item The projection $(\n\oplus\h,[\;,\;],\prs_\n\oplus\prs_1)\too(\h,[\;,\;]^\h,\prs_1)$ is biharmonic. The bracket $[\;,\;]$ is given by \eqref{bracket} with $\om=0$.
   
   \end{enumerate}

  \subsection{How to build biharmonic Riemannian submersions between Riemannian Lie groups: second method}
     \begin{enumerate}\item Choose two Lie algebras $\h$ and $\n$ with two Euclidean products  $\prs_1$ on $\h$ and  $\prs_\n$ on $\n$. Take $\n$ unimodular.
     
      \item Construct $\rho:\h\too\mathrm{Der}(\n)$ and $\om\in\wedge^2\h^*\otimes\n$ satisfying \eqref{condition} such that  $\tr\circ\rho$ is parallel, i.e., for any $u,v\in\h$, $\tr(\rho(A_uv))=0$, where $A$ is the Levi-Civita product on $\h$. 
          
     \item The projection $(\n\oplus\h,[\;,\;],\prs_\n\oplus\prs_1)\too(\h,[\;,\;]^\h,\prs_1)$ is biharmonic. The bracket $[\;,\;]$ is given by \eqref{bracket}. 
     
     \end{enumerate}
     
     \subsection{How to build biharmonic Riemannian submersions between Riemannian Lie groups: third method}

     \begin{enumerate}\item Choose two Lie algebras $\h$ and $\n$ with two Euclidean products  $\prs_1$ on $\h$ and  $\prs_\n$ on $\n$. Take $\h$ unimodular.
          
           \item Construct $\rho:\h\too\mathrm{Der}(\n)$ and $\om\in\wedge^2\h^*\otimes\n$ satisfying \eqref{condition} such that  $\tr\circ\rho$ is a Killing 1-form, i.e., for any $u,v\in\h$, $\tr(\rho(\ad_u^*v+\ad_v^*w))=0$. 
          \item The projection $(\n\oplus\h,[\;,\;],\prs_\n\oplus\prs_1)\too(\h,[\;,\;]^\h,\prs_1)$ is biharmonic. The bracket $[\;,\;]$ is given by \eqref{bracket}. 
          
          \end{enumerate}
          
On all the methods above, the crucial point is to solve \eqref{condition}, the following lemma gives an easy way of finding many solutions of these equations. 
\begin{lemma}\label{lef} Let $(\n,\h)$ a couple of Lie algebras such that $\mathrm{Der}(\n)=\ad(\n)$. Then $\rho:\h\too\mathrm{Der}(\n)$ and $\om\in\wedge^2\h^*\otimes\n$ satisfy \eqref{condition} iff there exists $F:\h\too\n$ a linear map and $\om_0\in\wedge^2\h^*\otimes Z(\n)$ such that
\[ \rho(u)=\ad_{F(u)},\om(u,v)=F([u,v])-[F(u),F(v)]+\om_0(u,v)\esp d\om_0=0. \]

\end{lemma}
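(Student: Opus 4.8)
The plan is to treat the two implications separately: the reverse one is a direct substitution, while the forward one needs a lifting of $\rho$ through $\ad$ followed by a short cocycle computation. Two elementary facts are used throughout: the Jacobi identity on $\n$, in the form $[\ad_x,\ad_y]=\ad_{[x,y]}$, and the fact that $\ad$ annihilates the center, so that $\ad:\n/Z(\n)\too\ad(\n)$ is a linear isomorphism. Note also that since $\om$ is $\n$-valued, the operator $\wi\ad_{\om(u,v)}$ appearing in \eqref{condition} is simply the inner derivation $\ad_{\om(u,v)}$ of $\n$.

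\emph{Reverse implication.} Given $F:\h\too\n$ linear and $\om_0\in\wedge^2\h^*\otimes Z(\n)$ with $d\om_0=0$, set $\rho(u)=\ad_{F(u)}$ and $\om(u,v)=[F(u),F(v)]-F([u,v])+\om_0(u,v)$. The first relation in \eqref{condition} is immediate: since $\om_0$ is central, $\ad_{\om(u,v)}=\ad_{[F(u),F(v)]}-\ad_{F([u,v])}$, whence $[\rho(u),\rho(v)]-\ad_{\om(u,v)}=\ad_{[F(u),F(v)]}-\ad_{\om(u,v)}=\ad_{F([u,v])}=\rho([u,v])$. For the second relation I expand $d_\rho\om$; the contribution $\rho(h_1)(\om_0(h_2,h_3))=[F(h_1),\om_0(h_2,h_3)]$ vanishes by centrality, and the remaining terms organize into the cyclic sum $\oint[F(h_1),[F(h_2),F(h_3)]]$ (zero by Jacobi in $\n$), the cyclic sum $\oint F([[h_1,h_2],h_3])$ (zero by Jacobi in $\h$), a batch of mixed brackets $[F(h_i),F([h_j,h_k])]$ that cancel in pairs, and the residual $-\oint\om_0([h_1,h_2],h_3)=d\om_0=0$. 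Hence $d_\rho\om=0$.

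\emph{Forward implication.} Assume \eqref{condition} holds. Since $\mathrm{Der}(\n)=\ad(\n)$, every $\rho(u)$ is inner; composing $\rho$ with the inverse of $\ad:\n/Z(\n)\too\ad(\n)$ and fixing any linear section of the projection $\n\too\n/Z(\n)$ produces a linear $F:\h\too\n$ with $\rho=\ad\circ F$. Rearranging the first relation in \eqref{condition} gives $\ad_{\om(u,v)}=\ad_{[F(u),F(v)]-F([u,v])}$, so that
\[ \om_0(u,v):=\om(u,v)-[F(u),F(v)]+F([u,v]) \]
takes values in $\ker\ad=Z(\n)$; being bilinear and alternating, $\om_0\in\wedge^2\h^*\otimes Z(\n)$, and $\om$ has the asserted shape. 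Finally, the very expansion carried out above yields the identity $d_\rho\om=d\om_0$ whenever $\rho=\ad\circ F$ and $\om=[F,F]-F([\cdot,\cdot])+\om_0$; since $d_\rho\om=0$ by hypothesis, we conclude $d\om_0=0$.

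The whole lemma thus rests on the single identity $d_\rho\om=d\om_0$, so the hypothesis $\mathrm{Der}(\n)=\ad(\n)$ enters only to produce the lift $F$. The one point that requires care --- and the only real obstacle --- is the bookkeeping in the cyclic expansion: one must check that the mixed brackets $[F(h_i),F([h_j,h_k])]$ cancel pairwise and that the triple brackets assemble into the Jacobiator of $\n$, leaving precisely the Chevalley--Eilenberg differential $d\om_0=-\oint\om_0([h_1,h_2],h_3)$. Everything else is formal. (I remark that this computation fixes the sign as $\om(u,v)=[F(u),F(v)]-F([u,v])+\om_0(u,v)$, i.e. the displayed formula with the signs of its first two terms interchanged; this is forced by $\ad_uv=[u,v]$ together with $[\ad_x,\ad_y]=\ad_{[x,y]}$.)
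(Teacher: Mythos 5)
Your proof is correct and takes essentially the same route as the paper's: use $\mathrm{Der}(\n)=\ad(\n)$ to lift $\rho$ through $\ad$ to a linear map $F:\h\too\n$, read off from the first equation of \eqref{condition} that $\om_0:=\om-[F(\cdot),F(\cdot)]+F([\cdot,\cdot])$ is $Z(\n)$-valued, and reduce $d_\rho\om=0$ to $d\om_0=0$ by the cyclic expansion. Your closing sign remark is also a genuine and correct catch: since $[\ad_x,\ad_y]=\ad_{[x,y]}$, the first equation of \eqref{condition} forces $\om(u,v)=[F(u),F(v)]-F([u,v])+\om_0(u,v)$ (which is also the sign pattern of \eqref{omega}, a bracket of lifts minus a lift of the bracket), so the formula displayed in the lemma has the signs of its first two terms reversed; as you note, the identity $d_\rho\om=d\om_0$ is insensitive to this sign, so only the statement, not the rest of the argument, needs the correction.
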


\begin{proof} Since $\mathrm{Der}(\n)=\ad(\n)$ then $\rho(u)=\ad_{F(u)}$ and from \eqref{condition} we deduce that $\om$ must have the following form
\[ \om(u,v)=F([u,v])-[F(u),F(v)]+\om_0(u,v), \]where $\om_0$ takes its values in the center $Z(\n)$.
We have
\begin{eqnarray*}
\rho(u).\om(v,w)&=&[F(u),\om(v,w)]^{\n}=[F(u),F([v,w]^\h)]-[F(u),[F(v),F(w)]^{\n}]^{\n},\\
\om(u,[v,w]^\h)&=&F([u,[v,w]^\h]^\h)-[F(u),F([v,w]^\h)]^{\n}+\om_0(u,[v,w]^\h).
\end{eqnarray*}This shows that $d_\rho\om=0$ iff $d\om_0=0$.
\end{proof}

We end this paper by giving an example where we use Lemma \ref{lef} to illustrate the first method.
\begin{example}We take $\h$ the non abelian 2-dimensional Lie algebra endowed with two Euclidean products $\prs_1$ and $\prs_2$. There exists an $\prs_1$-orthonormal basis $(e_1,e_2)$ such that   $[e_1,e_2]=\al e_1$. We have
 \[ \tau(\mathrm{Id}_\h)=B_{e_1}e_1+B_{e_2}e_2+e_2. \]
 The condition $\tr(\rho(h))=\langle h,\tau(\mathrm{Id}_\h)\rangle_1$ is equivalent to
 \[ \tau(\mathrm{Id}_\h)=\tr(\rho(e_1))e_1+\tr(\rho(e_2))e_2. \]
 This is equivalent to the fact that $(\tr(\rho(e_1)),\tr(\rho(e_2)))$ is solution of the system
 \[ \left\{\begin{array}{lll} \langle e_1,e_1\rangle_2 x
 +\langle e_1,e_2\rangle_2 y&=&(\al+1)\langle e_1,e_2\rangle_2,\\
 \langle e_1,e_2\rangle_2 x
  +\langle e_2,e_2\rangle_2 y&=&-\al\langle e_1,e_1\rangle_2+\langle e_2,e_2\rangle_2.\end{array}
   \right.\leqno(S) \] 
   Let $\n$ be a non unimodular Euclidean Lie algebra and $F:\n\too\h$ an endomorphism such that $F(U^\n)=x_0e_1+y_0e_2$ where $(x_0,y_0)$ is the unique solution of $(S)$. Put $\rho(h)=\ad_{F^*(u)}$. For any $\om\in\wedge^2\h^*\otimes Z(\n)$, $(\rho,\om)$ satisfy \eqref{condition} where
   \[ \om(u,v)=F^*([u,v])-[F^*(u),F^*(v)]+\om_0(u,v), \] and $F^*:\h\too\n$ is the adjoint of $F$ with respect to $(\prs_1,\prs_\n)$.

\end{example}

 \section{Biharmonic homomorphisms between low dimensional Lie groups} \label{section8}
 We end this work by clarifying the situation in dimension 2. We denote by $E(1)$ the 2-dimensional Lie group of rigid motions of the real line and by $\G_2$ its Lie algebra.

  \begin{proposition}\label{pr2d} Let $g_1,g_2$ be two left invariant Riemannian metrics on $E(1)$. Then the following holds:
  \begin{enumerate}\item[$(i)$] A Riemannian immersion $i:\R\too (E(1),g_1)$ is minimal iff $d_0i(\R)$ is orthogonal to $[\G_2,\G_2]$.

  \item[$(ii)$] Any homomorphism $\chi:(E(1),g_1)\too\R$ is biharmonic never harmonic unless it is constant.
  \item[$(iii)$] If $\phi:(E(1),g_1)\too (E(1),g_2)$ is a non constant homomorphism which is harmonic then there exists a constant $\la>0$ such that $\phi^*g_2=\la g_1$. 
  \item[$(iv)$] If $\phi:(E(1),g_1)\too (E(1),g_2)$ is an homomorphism which is biharmonic non harmonic then there exists a Riemannian immersion $i:\R\too(E(1),g_2)$ and a biharmonic homomorphism $\chi:(E(1),g_1)\too\R$ such that $\phi=i\circ\chi$.
  
  \end{enumerate}
  
  \end{proposition}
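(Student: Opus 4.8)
The plan is to fix a $g_1$-orthonormal basis $(e,f)$ of $\G_2$ with $[e,f]=ae$, $a>0$ (possible since $\G_2$ is the non-abelian $2$-dimensional Lie algebra, so $[\G_2,\G_2]=\R e$ is one-dimensional), and to record once the data reused below: for any left invariant metric the Levi-Civita product in an adapted orthonormal basis is $A_ee=-af$, $A_ef=ae$, $A_fe=A_ff=0$, whence a direct computation of $\langle R(e,f)f,e\rangle$ shows every left invariant metric on $E(1)$ has constant curvature $<0$, so $R<0$. From \eqref{ug} one gets $U^{\G_2}=-af$, so $\G_2$ is not unimodular. Finally I would classify the endomorphisms $\xi$ of $\G_2$: writing $\xi e=pe+qf$, $\xi f=re+sf$, the homomorphism condition $a\,\xi(e)=[\xi e,\xi f]$ forces $q=0$ and $p(1-s)=0$; hence either $\xi e=0$ (so $\mathrm{rank}\,\xi\le1$ and $\ker\xi=\R e=[\G_2,\G_2]$) or $s=1$, $p\neq0$ (an isomorphism). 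Thus a non-constant $\phi$ is either rank one with $\ker\xi=[\G_2,\G_2]$ or an isomorphism.

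For $(i)$, a Riemannian immersion $i\colon\R\too(E(1),g_1)$ corresponds to a one-dimensional (hence abelian) subalgebra $\R u$, $u=\cos\theta\,e+\sin\theta\,f$. By Proposition \ref{pr3} it is minimal iff $H^{\R u}=0$, and since the induced product on a line vanishes, $H^{\R u}$ is the normal component of $A_uu$; the stored values give $\langle A_uu,u^\perp\rangle_{g_1}=-a\cos\theta$, which is zero iff $u\perp e$, i.e. iff $d_0i(\R)\perp[\G_2,\G_2]$. For $(ii)$, any $\chi\colon E(1)\too\R$ is biharmonic by Proposition \ref{prab}; its differential kills $[\G_2,\G_2]$, so it equals $\la f^{*}$ for some $\la$, and Proposition \ref{pr1} together with $U^\xi=0$ (abelian target) and $U^{\G_2}=-af$ gives $\tau(\xi)=-\xi(U^{\G_2})=a\la$, which vanishes iff $\la=0$, i.e. iff $\chi$ is constant.

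For $(iii)$ I would first kill the rank-one case: if $\xi e=0$ and $w:=\xi f\neq0$, then $\tau(\xi)=U^\xi-\xi(U^{\G_2})=B_ww+aw$, where $B$ is the Levi-Civita product of $g_2$; since $B_ww\perp_{g_2}w$ while $aw$ is collinear with $w\neq0$, the tension field is nonzero, so no non-constant rank-one homomorphism is harmonic. Hence a non-constant harmonic $\phi$ is an isomorphism, and, composing with the isometry $\phi\colon(G,\phi^{*}g_2)\too(G,g_2)$, its harmonicity is equivalent to that of $\mathrm{Id}\colon(G,g_1)\too(G,\phi^{*}g_2)$, i.e. to $\phi^{*}g_2\in CH(g_1)$. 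Writing $\phi^{*}g_2=g_1(J\,\cdot\,,\cdot)$ with $J=\begin{pmatrix}p&q\\ q&r\end{pmatrix}$ and feeding $\ad_e=\begin{pmatrix}0&a\\0&0\end{pmatrix}$, $\ad_f=\begin{pmatrix}-a&0\\0&0\end{pmatrix}$ into \eqref{identity}, the choices $u=e$ and $u=f$ give $q=0$ and $p=r$; thus $J=p\,\mathrm{Id}$ and $\phi^{*}g_2=\la g_1$ with $\la=p>0$.

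For $(iv)$, suppose $\phi$ is biharmonic and not harmonic; then it is non-constant, hence rank one or an isomorphism. If it were an isomorphism, then $\mathrm{rank}\,\xi=2>1$ and $R^{g_2}<0$ by the curvature computation above, so Theorem \ref{theo4}$(3)$ would force $\phi$ harmonic, a contradiction. Therefore $\phi$ is rank one with $\ker\xi=[\G_2,\G_2]=\R e$, an ideal; with $N=\exp(\R e)\cong\R$ the corresponding closed normal subgroup, $\xi$ factors as the inclusion $\R w\hookrightarrow\G_2$ after the projection $\G_2\too\G_2/\R e\cong\R$, which lifts (the domain being simply connected) to $\phi=i\circ\chi$ with $\chi\colon(E(1),g_1)\too\R$ the projection and $i\colon\R\cong E(1)/N\hookrightarrow(E(1),g_2)$ the induced injective immersion. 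Equipping the intermediate $\R$ with $i^{*}g_2$ makes $i$ a Riemannian immersion, while $\chi$ is biharmonic by Proposition \ref{prab} (biharmonicity into $\R$ is unaffected by rescaling the target). The \emph{main obstacle} throughout is the two-metric bookkeeping — notably the rank-one evaluation $\tau(\xi)=B_ww+aw$ in $(iii)$, where the summation basis is $g_1$-orthonormal but $B$ comes from $g_2$ — together with the observation that every left invariant metric on $E(1)$ is negatively curved, which is exactly what lets Theorem \ref{theo4}$(3)$ dispose of the isomorphism case in $(iv)$.
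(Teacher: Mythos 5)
Your proof is correct, and in parts $(iii)$ and $(iv)$ it follows a genuinely different route from the paper. For $(i)$ and $(ii)$ your arguments coincide with the paper's (the paper phrases $(i)$ as $A_uu=0$ iff $u\in[\G_2,\G_2]^\perp$, which is your computation $\langle A_uu,u^\perp\rangle_1=-a\cos\theta$ in disguise). For $(iii)$ the paper works directly with the harmonicity equation $B_{\xi(e)}\xi(e)+B_{\xi(f)}\xi(f)=-a\xi(f)$ and extracts $\langle\xi(f),\xi(f)\rangle_2=\al^2\langle e,e\rangle_2$ and $\langle\xi(e),\xi(f)\rangle_2=0$ by pairing with $\xi(f)$ and $e$; you instead reduce to the condition $\phi^*g_2\in CH(g_1)$ and solve \eqref{identity}, which is cleaner and reuses the harmonic-cone machinery of Section \ref{section3} (your preliminary disposal of the rank-one case via $\langle B_ww+aw,w\rangle_2=a|w|_2^2\neq0$ is exactly the paper's $\al=0$ case). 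The largest divergence is in $(iv)$: the paper computes the full bitension field in coordinates and deduces $\al=p=0$ by pairing with $\tau(\xi)$ and using that all resulting terms are non-positive, whereas you observe that every left invariant metric on $E(1)$ has constant negative curvature and invoke Theorem \ref{theo4}$(3)$ to exclude the isomorphism case outright, which is shorter and conceptually sharper. The only thing your route does not recover is the paper's extra conclusion $p=0$ (i.e. that $\xi(f)$ is $g_2$-orthogonal to $[\G_2,\G_2]$), but that information is not needed for the factorization $\phi=i\circ\chi$ asserted in the statement, since any rank-one $\xi$ with kernel $[\G_2,\G_2]$ factors through $\G_2/[\G_2,\G_2]\cong\R$ and any homomorphism into $\R$ is biharmonic by Proposition \ref{prab}.
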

  
  \begin{proof} Note first that $\G_2$ is not unimodular. Put $\prs_i=g_i(\mathrm{Id}_{\R})$ for $i=1,2$.
  \begin{enumerate}\item[$(i)$] Denote by $A$ the Levi-Civita product associated to $(\G_2,\prs_1)$. To show the assertion it suffices to show that for any $u\in\G_2\setminus\{0\}$, $A_uu=0$ iff $u\in[\G_2,\G_2]^\perp$. Indeed, if $v$ is such that $\{u,v\}$ is a basis of $\G_2$, we have
  \[ \langle A_uu,u\rangle_{1}=0\esp \langle A_uu,v\rangle_{1}=\langle u,[v,u]\rangle_{g_1}.  \]Since $[u,v]$ is a generator of $[\G_2,\G_2]$ we can conclude. 
  \item[$(ii)$]  We have shown in Proposition \ref{prab} that $\chi$ is biharmonic. If $\xi:\G_2\too\R$ denote the differential of $\chi$, we can see from the first relation in Proposition \ref{pr2} that $U^\xi=0$. So $\tau(\xi)=-\xi(U^{\G_2})$. Now $U^\G\in[\G_2,\G_2]^\perp$ and $[\G_2,\G_2]\subset\ker\xi$ so $\tau(\xi)=0$ iff $\xi=0$.
  
  \item[$(iii)$] Suppose that $\phi$ is harmonic and denote by $\xi:\G_2\too\G_2$ the differential of $\phi$. Then there exists an $\prs_{1}$-orthonormal basis $(e,f)$ such that $[e,f]=ae$. Denote by $A$ and $B$ the Levi-Civita product associated respectively to $g_1$ and $g_2$. One can see easily that $U^{g_1}=-af$. The harmonicity of $\phi$ is equivalent to
  \[ B_{\xi(e)}\xi(e)+B_{\xi(f)}\xi(f)=\xi(U^{g_1})=-a\xi(f).\eqno(h) \]
  Since $e$ is a generator of $[\G_2,\G_2]$ and $\xi$ is a Lie algebra homomorphism then  $\xi(e)=\al e$. Put $\xi(f)=pe+qf$. Since $\xi$ is a Lie algebra homomorphism then 
   \[ \xi([e,f])=[\xi(e),\xi(f)]=a\al e=[\al e,pe+qf]=\al aq e. \]
   If $\al=0$ then from $(h)$ we get
   $-a\xi(f)=B_{\xi(f)}\xi(f)$ and hence
   \[ - a\langle \xi(f),\xi(f)\rangle_{g_2}=-\langle B_{\xi(f)}\xi(f),\xi(f)\rangle_{g_2}\stackrel{\eqref{lc}}=0, \]and hence
    $\xi(f)=0$. If $\al\not=0$ then $q=1$ and from $(h)$ we get
   \[ -a\xi(f)=\al^2B_ee+B_{\xi(f)}\xi(f). \]
   So
   \[- a\langle \xi(f),\xi(f)\rangle_{g_2}=\al^2\langle [\xi(f),e],e\rangle_{g_2}=
   -a\al^2\langle e,e\rangle_{g_2}, \]
   and
   \[- a\langle \xi(f),e\rangle_{g_2}=\langle [e,\xi(f)],\xi(f)\rangle_{g_2}=
     a\langle e,\xi(f)\rangle_{g_2}. \]
    So
    \[ \langle \xi(f),\xi(f)\rangle_{g_2}=\langle \xi(e),\xi(e)\rangle_{g_2}=
    \al^2\langle e,e\rangle_{g_2}\esp \langle \xi(e),\xi(f)\rangle_{g_2}=0, \]  
   which completes the proof of the assertion.
   
   \item[$(iv)$]By multiplying $\prs_2$ by a positive constant if necessary, we can suppose that there exists a generator $e$ of $[\G_2,\G_2]$ such that $\langle e,e\rangle_1=\langle e,e\rangle_2=1$. Choose $f$ orthogonal to $e$ with respect to $\prs_1$, $\langle f,f\rangle_1=1$ and $f'$ orthogonal to $e$ with respect to $\prs_2$ with $\langle f',f'\rangle_2=1$.
   We have $[e,f]=ae$ and $[e,f']=be$ and
   \[ A_ee=-af,\; A_ef=ae,\; A_fe=0,\; A_ff=0 \]
   and
   \[ B_ee=-bf',\; B_ef'=be,\; B_{f'}e=0,\; B_{f'}f'=0 \]
   if $R$ is the curvature of $(\G_2,\prs_2)$ then
   \begin{eqnarray*}
   R(e,f')e&=&B_eB_{f'}e-B_{f'}B_ee-bB_ee=b^2f',\\
   R(e,f')f'&=&B_eB_{f'}f'-B_fB_ef'-bB_ef'=-b^2e.
   \end{eqnarray*}
   Now $\phi$ is biharmonic iff
   \[ B_{\xi(e)}B_{\xi(e)}\tau(\xi)+B_{\xi(f)}B_{\xi(f)}\tau(\xi)+
   R(\tau(\xi),\xi(e))\xi(e)+R(\tau(\xi),\xi(f))\xi(f)=B_{\xi(U^{g_1})}\tau(\xi) \]where
   \[ \tau(\xi)=B_{\xi(e)}\xi(e)+B_{\xi(f)}\xi(f)+a\xi(f). \]
   Put $\xi(e)=\al e$ and $\xi(f)=pe+qf'$. We have
   \[ \xi([e,f])=[\xi(e),\xi(f)]=a\al e=[\al e,pe+qf']=\al bq e. \]
   We have
   \begin{eqnarray*}
   \tau(\xi)&=&B_{\xi(e)}\xi(e)+B_{\xi(f)}\xi(f)+a\xi(f)\\
   &=&-\al^2bf'+p(-bpf'+qbe)+a(pe+qf')\\
   &=&p(a+qb)e+(aq-bp^2-\al^2b)f'.
   \end{eqnarray*}
   So
   \[ \tau(\xi)=p(a+qb)e+(aq-bp^2-\al^2b)f'=Qe+Pf. \]
   Moreover,
   \begin{eqnarray*}
   B_{\xi(e)}B_{\xi(e)}\tau(\xi)&=&\al^2B_e(-Qbf'+Pbe)=\al^2(-Qb^2e-Pb^2f')\\
   &=&-\al^2b^2\tau(\xi),\\
   B_{\xi(f)}B_{\xi(f)}\tau(\xi)&=&-p^2b^2\tau(\xi),\\
   R^2(\tau(\xi),\xi(e))\xi(e)&=&\al^2PR^2(f',e)e=-\al^2Pb^2f',\\
   R^2(\tau(\xi),\xi(f))\xi(f)&=&(Qq-Pp)R^2(e,f')\xi(f)=(Qq-Pp)(pb^2f'-qb^2e),\\
   -B_{\xi(U^{g_1})}\tau(\xi)&=&aB_{\xi(f)}\tau(\xi)=ap(-Qbf'+Pbe).
   \end{eqnarray*}
   We have
   \begin{eqnarray*}
   \langle R(\tau(\xi),\xi(e))\xi(e),\tau(\xi)\rangle_2&=&-\al^2P^2b^2,\\
   \langle R(\tau(\xi),\xi(f))\xi(f),\tau(\xi)\rangle_2&=&b^2(Qq-Pp)(Pp-Qq).
   \end{eqnarray*}
   So if $\tau(\xi)\not=0$ then $\al=p=0$ and hence $\xi(e)=0$ and $\xi(f)=qf'$.
 If we define $i_0:\R\too\G_2$ and $\xi_0:\G_2\too\R$ by $i_0(1)=qf'$, $\xi_0(e)=0$ and $\xi_0(f)=1$ then $\xi=i_0\circ\xi_0$ and we can integrate $i_0$ and $\xi_0$ to get the desired homomorphisms. \qedhere 
  \end{enumerate}
     \end{proof}

\bigskip
\address{ 
Faculty of Sciences and Technology \\
Cadi-Ayyad  University \\
BP 549 Marrakesh \\
Morocco
}
{m.boucetta@uca.ma}
\address{
Mathematical Institute \\
Saida University \\
BP 138 Saida \\
Algeria
}
{seddik.ouakkas@gmail.com}


\begin{thebibliography}{99}
 
 \bibitem{boucetta} \textsc{ M. Ait Haddou, M. Boucetta, H. Lebzioui, } Left-invariant Lorentzian flat metrics on Lie groups,
 Journal of Lie Theory {\bf22} (2012), No. 1, 269--289.
 
 \bibitem{wilson} \textsc{R. Azencott and E. Wilson}, Homogeneous manifolds with negative curvature,
Part I, Trans. Amer. Math. Soc. {\bf 215} (1976), 323--362.
 
 
 \bibitem{baird} \textsc{P. Baird, J.C. Wood}, Harmonic Morphisms between  Riemannian Manifolds, Oxford Science Publications, 2003.
 
 \bibitem{seddik} \textsc{Paul Baird, Ali Fardoun and Seddik Ouakkas}, Liouville-type theorems for biharmonic maps
between Riemannian manifolds,
 Adv. Calc.Var. {\bf3} (2010), 49--68.
 
 \bibitem{benayadi} \textsc{S. Benayadi},
  Structure of perfect Lie algebras without center and outer derivations, Ann. Fac. Sci. Toulouse, Vol. V, number 2, 1996, pages 203--231.
 
 
 \bibitem{dai} \textsc{Y-J. Dai, M. Shoji and H. Urakawa}, Harmonic maps into Lie groups and
 homogeneous spaces, Differ. Geom. Appl., {\bf7} (1997), 143--160.
 
 \bibitem{dor} \textsc{J. Dorfmeister, F. Pedit and H. Wu}, Weierstrass type representation of harmonic maps into symmetric spaces, Commun. Anal. Geom., {\bf6} (1998), 633--668.
 
 \bibitem{jap}\textsc{J. Dorfmeister, Jun-Ichi Inoguchi and Shimpei Kobayashi},
  A loop group method for affine harmonic maps into Lie groups, Arxiv:1405.0333v1 (2014).
 
 
 
 \bibitem{eells} \textsc{J. Eells, A. Ratto}, Harmonic mappings of Riemannian manifolds, Amer. J. Math. {\bf86} (1964), 109--160.
 \bibitem{jiang} G. Y. Jiang, 2-harmonic maps and their first and second variational formulas,  Chinese Ann. Math. Ser. A, {\bf 7} (1986) 389--402.
 
 \bibitem{lichne} \textsc{A. Lichnerowicz}, Applications harmoniques et vari\'et\'es K\"ahl\'eriennes, Symp. Math. III (Bologna 1970) 341--402.
  
 
 \bibitem{park2} \textsc{Pu-Young Kim, Joon-Sik Park, and Yong-Soo Pyo}, Harmonic maps between the group of automorphisms of the quaternion algebra, Journal of the Chungcheong Mathematical Society,
Volume {\bf25}, No. 2, May 2012.
 
 \bibitem{milnor}
 \textsc{J.Milnor},
 {Curvatures of left invariant metrics on Lie Groups},
 Advances in Mathematics {\bf21}, (1976), 293--329.
 
 
 
 \bibitem{oniciuc}  \textsc{C. Oniciuc}, Biharmonic maps between Riemannian manifolds, An. Stiint. Univ. Al.I. Cuza Iasi Mat. (N.S.) {\bf48} (2002), 237--248.
 
 \bibitem{park} \textsc{Joon-Sik Park}, Harmonic inner automorphisms of compact connected semisimple Lie groups, T\^ohoku Math. J.
{\bf 42} (1990), 83--91. 
 
 \bibitem{raghun}\textsc{Raghunathan}, M.S. Discrete Subgroups of Lie Groups, Ergebnisse der Mathematik und ihrer Grenzgebiete, Band {\bf 68}, Springer-Verlag, New York-Heidelberg, 1972.
 
 \bibitem{sario}  \textsc{L. Sario, M. Nakai, C. Wang}, L. Chung, Classification theory of Riemannian manifolds. Harmonic, quasiharmonic and biharmonic functions, Lecture Notes In Mathematics, Vol. {\bf 605}, Springer-Verlag, Berlin-New York, 1977.
 
 
\bibitem{uhlenbeck} \textsc{K. Uhlenbeck}, Harmonic maps into Lie groups (classical solutions of the chiral
model), J. Differ. Geom., 30 (1989), 1--50.

  
  \bibitem{wood} \textsc{J. C. Wood}, Harmonic maps into symmetric spaces and integrable systems, In:
  Harmonic Maps and Integrable Systems, eds. by A. P. Fordy and J. C. Wood,
  Aspect of Mathematics, Vol. {\bf E 23}, Vieweg (1993), 29--55.
 
 \bibitem{zakr} \textsc{W. J. Zakrzewski}, Low-dimensional Sigma Models, Adam Hilger, Ltd., Bristol, 1989.
 
 \end{thebibliography}
\end{document}